\documentclass[11pt]{article}
\usepackage{setspace}

\usepackage{amsfonts,amsmath,amssymb,mathrsfs,amsthm} 
\usepackage[top=25mm, bottom=25mm, left=25mm, right=25mm]{geometry} 
\usepackage{graphicx} 
\usepackage[nodayofweek]{datetime}			
\usepackage[authoryear,round]{natbib}
\usepackage{algorithm}
\usepackage{algpseudocode}
\usepackage{tkz-graph}
\usetikzlibrary{arrows}
\usepackage{xcolor,colortbl}
\usepackage{caption}
\usepackage{subcaption}
\usepackage{eso-pic}
\usepackage{booktabs}
\usepackage{bm}
\usepackage{tikz-cd}
\usepackage{tocbasic}
\usepackage{multirow}
\usepackage{pdflscape}
\DeclareTOCStyleEntry[dynnumwidth]{tocline}{figure}
\DeclareTOCStyleEntry[dynnumwidth]{tocline}{table}

\newtheorem{definition}{Definition}[section]
\newtheorem{theorem}{Theorem}[section]
\newtheorem{lemma}{Lemma}[section]
\newtheorem{proposition}{Proposition}[section]
\newtheorem{corollary}{Corollary}[section]
\newtheorem{example}{Example}[section]
\newtheorem{remark}{Remark}[section]

\newtheoremstyle{mystyle}  
  {}   				
  {}   				
  {}  				
  {0pt}       		
  {\bfseries}	
  {:}        
  {5pt plus 1pt minus 1pt}				  
  {}          

\theoremstyle{mystyle} 
\newtheorem*{eg}{Working Example}

\makeatletter
\def\degrees#1{\gdef\@degrees{#1}}
\def\@degrees{\@latex@error{No \noexpand\degrees given}\@ehc}
\makeatother

\usepackage[figuresleft]{rotating}
\usepackage{enumerate}
\usepackage{hyperref}
\hypersetup{
    linkcolor=blue,
    citecolor=blue,
    filecolor=magenta,      
    colorlinks=true, 
    urlcolor=cyan,
    pdftitle={},    pdfpagemode=FullScreen
    }

\newcommand{\lcurl}{\left\{}
\newcommand{\rcurl}{\right\}}
\newcommand{\PF}{\text{PF}}

\newcommand{\Th}{\textsuperscript{th}}	

\newcommand{\ZZ}{\mathbb{Z}}	
\newcommand{\RR}{\mathbb{R}}    
\newcommand{\II}{\mathbb{I}}

\newcommand{\PP}{\mathbb{P}}		
\newcommand{\EE}{\mathbb{E}}		

\usepackage{mathtools}
\mathtoolsset{showonlyrefs}
\usepackage{authblk}
\usepackage{pdfpages}
\makeatletter
\renewcommand\AB@affilsepx{, \protect\Affilfont}
\makeatother

\begin{document}
\title{New Modelling and Solution Frameworks for Multiple Objectives, Decomposability and Impulsivity in Continuous-time Markov Decision Processes}
\author[1]{Luke Fairley}
\author[1]{Rob Shone}
\author[1]{Peter Jacko}
\author[2]{Jefferson Huang}
\affil[1]{Lancaster University}\affil[2]{Naval Postgraduate School}
\date{\today}

\maketitle

\begin{abstract}
    Impulsive control, in the context of continuous-time Markov decision processes (CTMDPs), refers to actions that instantaneously update the state of the process, leading to theoretical complications. Previous work has overcome these complications, but at the expense of approachability and ease of implementation: those models are mathematically complex and incompatible with pre-existing solution methods. This paper presents an alternative formulation of impulsive control for CTMDPs that is easier to employ in the modelling of real-world problems and can be solved by pre-existing solution methods. This formulation is then extended to allow for multiple objectives and decomposability. A decomposition-based heuristic method is developed that leverages Dantzig-Wolfe decomposition to find approximate Pareto fronts for large multi-objective decomposable impulsive CTMDPs. The modelling framework and solution methodology are applied to a dynamic maintenance problem to demonstrate their computational efficiency, solution space coverage, and solution quality.
\end{abstract}

\section{Introduction}
Sequential decision making under uncertainty is required in many industries including finance, medicine, computing and engineering. For example, in maintenance, decisions are made as to which components of a system need to be repaired, the extent of the repairs, and when these should be performed \citep{papakonstantinou2014MaintenancePOMDP}. These decisions are not made at a single point in time, but instead are revisited at many different stages as the system continues to degrade in an unpredictable manner due to wear and tear, or external shocks. Another example is inventory control, in which demands for items vary randomly from day to day, and decisions are made as to when new orders should be placed and how many units should be ordered. Additional uncertainty may arise from unreliable supply chains or uncertain lead times \citep{boute2022deepRLInventory}. A third example is the control of queueing systems, where there is uncertainty in customer inter-arrival times and service times. Decisions may involve the routing of customers to different queues, or the acceptance or rejection of customers from the system entirely \citep{shone2020queueing,anselmi2024admissionControl, chen2024queueing}. 

A common framework for modelling sequential decision problems under uncertainty is the Markov decision process (MDP). In general, an MDP is an extension of a Markov chain which allows actions to be taken by a decision-maker. These actions affect both immediate costs (or, alternatively, rewards) and state transitions. Rather than simply choosing actions that minimise immediate costs, the goal is instead to minimise some measure of long-run cost by controlling the system in an optimal manner. As such, MDPs can capture much of the structure of many sequential decision problems under uncertainty. 

In this paper we focus on continuous-time MDPs (CTMDPs) and consider variations thereof which are seldom seen together in the academic literature despite being commonly present in real-life situations, including those with impulsive actions, multiple objectives, and a decomposable structure. By developing and analysing the necessary modelling frameworks, and proposing a heuristic solution methodology, we then investigate a CTMDP model for the dynamic maintenance of series-parallel systems.

\subsection{Impulsive Actions}
One variation of the CTMDP involves the use of \emph{impulsive actions}. These are actions that cause an instantaneous change in the state, as opposed to the more common \emph{gradual} actions which mandate a random non-zero sojourn time before the transition takes place. Such a modelling feature is useful for any problem where the state changes immediately in response to the action. A pioneering theoretical treatment of impulsive control or gradual-impulsive control (for systems that allow for both types of action) is given by \cite{dufour2015impulsive}. However, we believe this framework to be mathematically complex in such a way as to limit its accessibility and potential applicability, leaving room for the development of formulations which capture similar behaviour whilst remaining tractable for real-world modelling and solution.

\subsection{Multiple Objectives}
The second variation of the MDP we consider is the multi-objective MDP, or MOMDP. In a standard single-objective MDP, one aims to control the system in such a way that the long-run scalar costs are minimised. In an MOMDP, one aims to strike a balance between elements of a vector cost function in the long run, and to find multiple ways to control the system to strike different balances. These different costs can be diametrically opposed, so it is necessary to find a range of compromises. A survey on MOMDPs is given by \cite{roijers2013momdp_survey}. The literature on multi-objective MDPs is relatively sparse; some investigations of such problems include \cite{Wakuta2000momdp} and \cite{GHOSH1990momdp}. We argue that many common problems which are modelled as MDPs lend themselves to multi-objective modelling and should be treated as such. To give a few examples: in maintenance problems, natural objectives can be based on  maintenance costs and system reliability or performance; in inventory problems, objectives can include holding cost minimisation and demand satisfaction; and in queueing control, objectives can be based on costs and customer satisfaction, where the latter may depend, for example, on the time spent in the queue. These problems, and many more, require a balance between monetary reward and some measurable outcome which may not scale linearly with --- or could be entirely separate from --- the monetary outcome.

\subsection{Decomposability}
The third variation we consider is that of decomposability, meaning an MDP that can be thought of as being comprised of multiple smaller MDPs, usually with an objective function or constraints linking them together. A common example is that of the weakly-coupled MDP, which is made up of multiple smaller MDPs with their own state spaces, action spaces, and stochastic dynamics, but with global constraints on action sets. For example, costs may be associated with actions taken in each small MDP, and there may be a global budget over all the MDPs for each decision epoch. Explorations of weakly-coupled MDPs are given by \cite{meuleau1998weaklyCoupled}. \cite{bertsimas2016decomp} introduces a broader definition of a decomposable discrete-time MDP which includes weakly-coupled MDPs, but still limits this to discrete time. In this work, the decomposed MDPs will be linked together by objective functions which may be non-linear, rather than constraints.

{\color{black}
\subsection{Paper Outline}

\autoref{paper2:model:ctmdp} and \autoref{paper2:model:impulsive} first formalise and analyse an alternative formulation of impulsive control (i.e. instant transitions); notably, we use equivalence relations to analyse this framework, and prove that a particular class of intuitive policies always contains an optimal solution. We also demonstrate that it is always possible to obtain such a policy from an optimal policy not in this class, if one exists. \autoref{paper2:model:decomp} follows by extending this impulsive CTMDP model to the \emph{decomposable} impulsive CTMDP, a model which can be thought of as multiple separable impulsive CTMDPs which can be controlled together, and which are unified by shared objective functions. Further analysis shows that, when controlled by a specific type of policy, the resulting high-dimensional Markov process decomposes into smaller independent Markov processes, up to an equivalence relation. \autoref{paper2:solution} then presents a block-diagonalisation of the LP formulation of a decomposable impulsive CTMDP under certain restrictions, where this block-diagonal form can then by solved via Dantzig-Wolfe decomposition. This in turn allows us to develop a heuristic in which we find Pareto fronts for the compartment CTMDPs that made up the decomposable impulsive CTMDP, and then piece these solutions back together with a bi-objective binary program.  In \autoref{paper2:app} we apply our modelling framework and solution methodology to a dynamic maintenance problem motivated by reliability engineering, and investigate the strengths and weaknesses of our solution methodology compared to an exact LP solver. \autoref{paper2:conc} then concludes with a discussion of strengths and limitations, and with suggestions for further research. Throughout, most of the mathematical proofs are deferred to the appendices, with some alternative direct proofs available in the online supplementary materials, and shorter proofs left in the main body. 
}

\subsection{Our Contributions}
The contributions of this paper are as follows:
\begin{itemize}
    \item We provide an alternative implementation of impulsive control in CTMDPs that we argue is more approachable for modellers, and more compatible with standard solution methods. Specifically, we focus on finite state and action spaces and therefore avoid the need for technical considerations such as Borel spaces and measurability.
    \item We extend our modelling framework for impulsive control to include multiple objectives and decomposability. Specifically, we introduce a \emph{decomposable impulsive CTMDP}, which can be thought of as a CTMDP composed of multiple separate impulsive CTMDPs. We discuss the subtleties of this and develop the necessary structure to ensure that whole CTMDP and smaller compartment CTMDPs are both truly impulsive.
    \item We develop a heuristic solution methodology for multi-objective decomposable impulsive CTMDPs. We prove that a restricted version of the problem of finding the Pareto front for a decomposable impulsive CTMDP can be transformed to have a block-diagonal structure, which enables the use of Dantzig-Wolfe decomposition \citep{dantzig1960decomposition}. In short, we show that with some additional assumptions on the solution space, finding an approximate Pareto front of the decomposable impulsive CTMDPs is equivalent to finding the Pareto fronts of the compartment CTMDPs and piecing these solutions back together. 
\end{itemize}

\subsection{Preliminaries}
This paper assumes familiarity with continuous-time Markov decision processes, multi-objective optimisation, and Dantzig-Wolfe decomposition. Additionally, the theoretical work makes extensive use of equivalence relations, for which we introduce some notation and terminology here. If $\sim$ is an equivalence relation, we write $s\sim t$ if $s$ is equivalent to $t$ under $\sim$. The \emph{equivalence class} of $s$ can be denoted by $\widehat{s}$. If $\sim$ is an equivalence class over a set $\mathcal{S}$, we denote the set of equivalence classes by $\widehat{\mathcal{S}}$. We call the function $\varphi: \mathcal{S}\to\widehat{\mathcal{S}}$ that maps every element $s$ of the base set $\mathcal{S}$ to its equivalence class $\widehat{s}$ in $\widehat{\mathcal{S}}$ the \emph{canonical surjection}. To compare the ``strength'' of equivalence relations, for two equivalence relations $\sim$ and $\sim'$ we say that $\sim$ \emph{as strong as} $\sim'$ if and only if $s\sim t \implies s \sim' t$ for all $s,t\in \mathcal{S}$. Some literature refers to this as $\sim$ being ``finer'' than $\sim'$.

\section{Modelling Framework}
\label{paper2:model}
This section introduces a succession of modelling frameworks which build upon each other. In \autoref{paper2:model:ctmdp} we introduce a notion of CTMDPs that allows for dummy ``self-transitions''. This allows for the development of a range of equivalence classes that will later be useful for the analysis of our impulsive and decomposable frameworks. \autoref{paper2:model:impulsive} introduces our formulation of CTMDPs with impulsive control, building upon the CTMDPs with self-transitions. \autoref{paper2:model:decomp} then extends this to decomposable impulsive CTMDPs, formalising the notion of multiple stochastically independent impulsive CTMDPs being treated as one impulsive CTMDP with shared objective functions. Each of Sections 
\ref{paper2:model:ctmdp}, \ref{paper2:model:impulsive} and \ref{paper2:model:decomp} are further subdivided into a ``Model'' subsection to give a softer introduction, followed by a ``Theory'' subsection to provide a more rigorous analysis. As such, the reader who is interested only in the modelling frameworks, their interpretations, and how to use them may choose to focus on the first part of each subsection.

\subsection{CTMDPs with Self-Transitions}
\label{paper2:model:ctmdp}
\subsubsection{Model}
\label{paper2:model:ctmdp:model}
A finite-state and finite-action CTMDP is typically defined as a 4-tuple:$$\mathfrak{C} = \left\langle \mathcal{S}, (\mathcal{A}(s))_{s\in\mathcal{S}}, q:\mathcal{SA}\times\mathcal{S}\to\RR, \bm{c}:\mathcal{SA}\to \RR^k \right\rangle,$$
where $\mathcal{S}$ is the state space, $\mathcal{A}(s)$ is the feasible action space for state $s\in\mathcal{S}$, $\mathcal{SA}=\left\{(s,a):s\in\mathcal{S},a\in\mathcal{A}(s)\right\}$ is the set of feasible state-action pairs, $q(s,a,s')$ is the transition rate from state $s$ to state $s'$ when taking action $a$, and $\bm{c}(s,a)$ is the vector cost rate when taking action $a$ in state $s$. We write $\mathcal{A}:=\bigcup_{s\in\mathcal{S}}\mathcal{A}(s)$ as the space of all possible actions. A \emph{deterministic stationary} (DS) policy is a map $\mu:\mathcal{S}\to\mathcal{A}$. Throughout, we assume that our CTMDPs are \emph{weakly-communicating}, meaning the state space can be split into two sets: one for which for any two states $s,s'$ in the set, there exists a DS policy such that $s'$ can be reached from $s$ in finite time with non-zero probability; and another set of states for which all are transient under any DS policy. A stronger condition is \emph{unichain}, where for any DS policy, the induced Markov process is also unichain (i.e. has one recurrent class and possibly some transient classes).

Under the usual definition of transition rates, we have $q(s,a,s) = -\sum_{s'\in\mathcal{S}:s'\neq s}q(s,a,s')$; hence, we do not generally consider a state to be able to transition to itself. This is because, even if one could consider some event with an exponentially distributed time that would technically cause state $s$ to transition to itself, this stochastic information is lost in the $q$ function, as the positive ``outbound'' and negative ``inbound'' transition rates of a self-transition would cancel out. However, we argue that it can be useful to keep this information explicitly encoded. To this end we define the \emph{CTMDP with self-transitions} as a 4-tuple:$$\mathfrak{C} = \left\langle \mathcal{S}, (\mathcal{A}(s))_{s\in\mathcal{S}}, q^+:\mathcal{SA}\times\mathcal{S}\to\RR_{\geq0}, \bm{c}:\mathcal{SA}\to\RR^k  \right\rangle,$$
where the state space, action space, and cost functions are all defined as before, but we now define the \emph{generalised transition rates} $q^+(s,a,s')$, satisfying $0\leq q^+(s,a,s) < +\infty$, as the positive transition rate from state $s$ to state $s'$ when taking action $a$. This allows the explicit representation of transition rates from a state back to itself. From the generalised transition rates, we can define the \emph{negative generalised transition rates} $q^-(s,a,s') := -\sum_{s''\in\mathcal{S}}q^+(s,a,s'')\II\{s=s'\}$ as the negative outbound flows of the transition rates. We can then recover the standard transition rates $q(s,a,s'):=q^+(s,a,s') + q^-(s,a,s')$, where the rates in standard form satisfy:
\begin{align}
    +\infty > q(s,a,s') &\geq 0, \text{ for all }s'\neq s,\\
    -\infty < q(s,a,s') &\leq 0, \text{ for }s'=s,\\
    \sum_{s'\in\mathcal{S}}q(s,a,s') &= 0.
\end{align}
As such, all transitions from a state back to itself are ignored by any analysis that relies only on the $q$ function, and a standard CTMDP is recovered. It is also possible to go in the other direction by starting with a standard CTMDP with transition rates $q(s,a,s')$ and choosing $q^+(s,a,s')$ such that $q^+(s,a,s') = q(s,a,s')$ for any $s,a,s'$ such that $s\neq s'$, and $q^+(s,a,s)$ {\color{black}can then be chosen as any non-negative value, as any such choice yields the same original $q$ function when converting back}.

Upon first inspection, the notion of generalised transition rates may seem superfluous. However, we shall use it to define a range of useful equivalence classes over states and state-action pairs. We give the following remark to demonstrate what is meant by saying that two states are equivalent. 

\begin{remark}
Consider the following two-state continuous-time Markov chain in standard form:
$$Q=\begin{bmatrix}
    -1 & 1\\
    1 & -1
\end{bmatrix}.$$
This transition rate matrix can be obtained from the following positive transition rate matrix and its associated negative transition rate matrix:
$$Q^+ = \begin{bmatrix}
    1 & 1\\
    1 & 1
\end{bmatrix},\\
Q^- = \begin{bmatrix}
    -2 & 0\\
    0 & -2
\end{bmatrix}.$$
Here, it is easy to see that the two states are in some way ``equivalent", in that they transition back to themselves and to each other at the same rate, but this information gets lost in the standard form. This notion of equivalence will be rigorously developed in \autoref{paper2:model:ctmdp:theory}.
\end{remark}

For any unichain CTMDP, we consider the infinite-horizon long-run average multi-objective LP formulation following \cite{xianping2009continuous}, which we call the Original Problem (OP):
\begin{align}
    &\text{(OP)} &\min_{\bm{\pi}}   &\,g_1 = \sum_{(s,a)\in\mathcal{SA}}c_1(s,a)\pi(s,a) \label{op-obj1}\\
    &            & \vdots \nonumber\\
    &            &\min_\pi     &\,g_K = \sum_{(s,a)\in\mathcal{SA}}c_K(s,a)\pi(s,a) \label{op-objK}\\
    &            & \text{s.t.} &\sum_{(s,a)\in\mathcal{SA}}q(s,a,s')\pi(s,a) = 0,\text{ for all }s'\in\mathcal{S}, \label{op-cons-balance}\\
    &            &             &\sum_{(s,a)\in\mathcal{SA}}\pi(s,a) = 1, \label{op-cons-norm}\\
    &            &             &\pi(s,a)\geq 0,\text{ for all }(s,a)\in\mathcal{SA}. \label{op-cons-pos}
\end{align}
Under a mixed-objectives scalarisation, this formulation is also valid for weakly-commu\-nicating CTMDPs, as such CTMDPs always permit a unichain optimal policy \citep[Chapter 9]{puterman2014markov}. Lines \eqref{op-obj1} to \eqref{op-objK} give the $K$ objectives of the problem. {\color{black} Each decision variable $\pi(s,a)$ is a state-action frequency, i.e. the long-run probability of being in state $s$ and taking action $a$; these variables extend the notion of a stationary distribution in standard Markov chain analysis to the MDP setting.} Constraints \eqref{op-cons-balance} give the CTMDP balance equations which enforce the stochastic dynamics of the problem, and constraints \eqref{op-cons-norm}-\eqref{op-cons-pos} ensure that $\pi$ is a valid probability distribution. For $K>1$, this is a multi-objective problem for which we need to consider the Pareto front. A Pareto-optimal solution may, in general, be a \emph{randomized policy}, where for any recurrent state $s$ the probability of taking action $a$ in state $s$ is given by $\mu(s,a) = \pi(s,a)/\sum_{a'\in\mathcal{A}(s)}\pi(s,a')$ \citep{roijers2013momdp_survey}. These policies correspond to non-basic feasible solutions of OP. Despite this, there always exists a subset of the Pareto-optimal policies which are deterministic (i.e., basic), and our analysis focuses on such policies. This is because these policies are likely to be more intuitive, easier to implement, and more explainable to the decision-maker than randomized policies. As such, we argue that the goal of finding all DS policies that are Pareto-optimal with respect to the set of stochastic stationary policies is a reasonable one, as such a set still offers great flexibility to the decision maker. The rest of this subsection is dedicated to the development of a theory of \emph{equivalence} over the states and state-action pairs of CTMDPs with self-transitions. This will be useful later for the analysis of impulsive CTMDPs, which can have many equivalent states or state-action pairs.

\subsubsection{Theory}
\label{paper2:model:ctmdp:theory}
We first define what it means for states or state-action pairs to be equivalent. To be clear, we are \emph{not} defining the well-known equivalence relations that give the communicating classes commonly studied for Markov chains, but a different notion of equivalence that is needed for some of the analysis in this paper. 
\begin{definition}[Equivalence]
    For any finite-state finite-action CTMDP with self-transitions, with vector costs $\bm{c}(s,a)$ and generalised transition rates $q^+$, we say that two state-action pairs $(s,a)$ and $(s',a')$ are:
    \begin{enumerate}[(i)]
        \item \emph{stochastically equivalent} if  $q^+(s,a,s'') = q^+(s',a',s'')$ for all states $s''$, and write $(s,a)\sim^{q^+} (s',a')$,
        \item \emph{cost equivalent} if $\bm{c}(s,a)=\bm{c}(s',a')$, and write $(s,a)\sim^{\bm{c}}(s',a')$,
        \item \emph{totally equivalent} if $(s,a)\sim^{q^+} (s',a')$ and $(s,a)\sim^c(s',a')$, and write $(s,a)\sim (s',a')$.
    \end{enumerate}
    Additionally, for any DS policy $\mu$, we say that two states $s,s'$ are:
    \begin{enumerate}[(i)]
        \item \emph{stochastically equivalent} if $(s,\mu(s))\sim^{q^+}(s',\mu(s'))$, and write $s\sim_\mu^{q^+}s'$,
        \item \emph{cost equivalent} if $(s,\mu(s))\sim^{\bm{c}}(s',\mu(s'))$, and write $s\sim_\mu^{\bm{c}}s'$,
        \item \emph{totally equivalent} if $(s,\mu(s))\sim(s',\mu(s'))$, and write $s\sim_\mu s'$.
    \end{enumerate}
    Finally, for any equivalence relation $R$ over the feasible state-action space $\mathcal{SA}$ and any DS policy $\mu$, we can define the \emph{$\mu$-adapted equivalence relation} $R_\mu$ over the state space $\mathcal{S}$ as $s\,R_\mu\,s' \iff (s,\mu(s))\,R\,(s',\mu(s'))$
\end{definition}
As these are all based on equalities, it is immediately evident that the binary relations $\sim^{q^+},\sim^{\bm{c}}, \sim$, and their $\mu$-adapted counterparts are all equivalence relations. We can therefore define the equivalence classes $\widehat{s,a}$ under any of these relations and the partitions $\widehat{\mathcal{SA}}$ of $\mathcal{SA}$ induced by those equivalence relations. For any policy $\mu$, we can also define the equivalence classes $\widehat{s}$ under any of the $\mu$-adapted relations and the partitions $\widehat{\mathcal{S}}$ of $\mathcal{S}$ induced by those equivalence relations.

Now, under any DS policy $\mu$, we wish to construct and analyse the stochastic process formed by mapping the state of the chain to its stochastic equivalence class. In fact, more generally than this, we can construct and analyse the continuous-time Markov chain (CTMC) formed from any equivalence relation $R$ that is \emph{as strong as} stochastic equivalence, i.e., $s\,R\,s' \implies s\sim_\mu^{q^+}s'$. 
{\color{black} Following \cite{kemenySnell1976FiniteMarkovChains}, we say that a controlled (discrete-time) Markov chain is \emph{strongly lumpable} with respect to the partition of $\mathcal{S}$ induced by $R$, where a Markov chain over state space $\mathcal{S}$ is said to be strongly lumpable with respect to a partition $\widehat{\mathcal{S}}$ of $\mathcal{S}$ if:
$$\sum_{s\in\widehat{s}}p(s',s)=\sum_{s\in\widehat{s}}p(s'',s),\text{ for all } s',s''\in\widehat{s}',\text{ and for all }\widehat{s},\widehat{s}'\in\widehat{\mathcal{S}}.$$
This definition extends naturally to the continuous-time case via uniformisation. As such, the following \autoref{prop:classesAreChains}, \autoref{coro:classesAreChainsCont}, and \autoref{prop:aggregatedCTMChasExpectedDistro} all immediately follow; however, we provide direct alternative proofs in the {\color{black} supplementary material.}
We define $q_\mu^+(s,s') := q^+(s,\mu(s),s')$ and recognise that this value depends only on the class of $s$ with respect to $R$, and therefore we can write $q_\mu^+(\widehat{s},s')$ without loss of generality. We then define $q_\mu^+(\widehat{s},\widehat{s}'):=\sum_{s''\in\widehat{s}'}q_\mu^+(\widehat{s},s'')$, which intuitively means that we expect the generalised transition rate from class $\widehat{s}$ to $\widehat{s}'$ to be the cumulative generalised transition rate across all members of $\widehat{s}$ to any member of $\widehat{s}'$. We therefore also define:
\begin{align*}
    q_\mu^-(\widehat{s}) &:= -\sum_{\widehat{s}'\in\widehat{\mathcal{S}}}q_\mu^+(\widehat{s},\widehat{s}')\\
    q_\mu^-(\widehat{s},\widehat{s}') &:= \II\{\widehat{s}=\widehat{s}'\}q_\mu^-(\widehat{s}),
\end{align*}
to give the negative generalised transition rates for any $\widehat{s},\widehat{s}'\in\widehat{\mathcal{S}}$. We also define $q_\mu(\widehat{s},\widehat{s}'):=q_\mu^+(\widehat{s},\widehat{s}') + q_\mu^-(\widehat{s},\widehat{s}')$ to get the transition rates in standard form. Now, let $S^\mu(t)$ be the CTMC induced by a CTMDP with self-transitions controlled by a DS policy $\mu$, and let $\lcurl S_k^\mu\rcurl_{k=1}^\infty$ be $S^\mu(t)$ uniformised using some uniformisation parameter $\Delta$, following \cite{uniformisationSerfozo1979}. We want to investigate the behaviour of the equivalence classes of this process with respect to any equivalence relation $R$ over $\mathcal{S}$ which is as strong as $\sim_\mu^{q^+}$, i.e., $s Rs'\implies s\sim_\mu^{q^+}s'$. {\color{black} In other words, we wish to investigate the process that has been lumped with respect to the partition induced by such $R$.} Define $\widehat{S_{k}^\mu} := \varphi_R(S_k^\mu)$, where $\varphi_R$ is the \emph{canonical surjection} of $R$ (i.e., the function that maps an element to its equivalence class under $R$), so that $\widehat{S_k^\mu}$ is the stochastic process of the equivalence classes of the controlled CTMDP.

\begin{proposition}
\label{prop:classesAreChains}
   $\left\{\widehat{S^\mu_{k}}\right\}_{k=1}^\infty$ is a Markov chain over $\widehat{\mathcal{S}}$ with transition probabilities:
    $$p_\Delta(\widehat{s},\widehat{s}') = \begin{cases}
         1 + q(\widehat{s},\widehat{s})\Delta,& \widehat{s}' = \widehat{s}, \\
         q(\widehat{s},\widehat{s}')\Delta, & \text{otherwise.} 
    \end{cases}$$   
\end{proposition}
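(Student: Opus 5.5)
The plan is to reduce the statement to the classical lumpability criterion and then verify that criterion directly from the definition of the class-level rates. First, I will write down the uniformised chain $\{S_k^\mu\}$ explicitly. Following \cite{uniformisationSerfozo1979}, with $\Delta$ small enough that $1+q_\mu(s,s)\Delta\ge 0$ for every $s$, its one-step transition probabilities are
\begin{equation*}
p_\Delta(s,s') = \II\{s=s'\} + q_\mu(s,s')\Delta = \II\{s=s'\} + q_\mu^+(s,s')\Delta - \II\{s=s'\}\Big(\sum_{s''\in\mathcal{S}} q_\mu^+(s,s'')\Big)\Delta ,
\end{equation*}
where $q_\mu^+(s,s''):=q^+(s,\mu(s),s'')$ as defined above. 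The generalised rates $q^+$ are used instead of $q$ on purpose. The self-transition rate $q^+(s,\mu(s),s)$ cancels inside $q$, so $p_\Delta$ is unaffected. Writing things this way, however, exposes the quantity that $R$ controls, since $s\,R\,s'$ implies $q_\mu^+(s,\cdot)=q_\mu^+(s',\cdot)$ as functions on $\mathcal{S}$.

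Second, I will compute, for a fixed state $s$ and a fixed class $\widehat{s}'$, the aggregated probability $\sum_{t\in\widehat{s}'}p_\Delta(s,t)$. There are two cases.
\begin{enumerate}[(i)]
\item If $s\notin\widehat{s}'$, the indicator terms vanish and the sum is $\Delta\sum_{t\in\widehat{s}'}q_\mu^+(s,t)=\Delta\, q_\mu^+(\widehat{s},\widehat{s}')$, with $\widehat{s}$ the class of $s$. Because $q_\mu^+(s,\cdot)$ depends only on $\widehat{s}$, this value is the same for every $s\in\widehat{s}$. It also equals $q_\mu(\widehat{s},\widehat{s}')\Delta$, since $\widehat{s}\ne\widehat{s}'$.
\item If $s\in\widehat{s}'$, i.e. $\widehat{s}'=\widehat{s}$, the sum is $1+\Delta q_\mu^+(\widehat{s},\widehat{s})-\Delta\sum_{s''\in\mathcal{S}}q_\mu^+(\widehat{s},s'')$. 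Splitting the last sum over the classes of the partition turns it into $\sum_{\widehat{s}''}q_\mu^+(\widehat{s},\widehat{s}'')=-q_\mu^-(\widehat{s})$. The total is therefore $1+\Delta\big(q_\mu^+(\widehat{s},\widehat{s})+q_\mu^-(\widehat{s},\widehat{s})\big)=1+q_\mu(\widehat{s},\widehat{s})\Delta$, which again does not depend on the representative $s$.
\end{enumerate}
Together these show that the Kemeny--Snell strong lumpability condition holds for the partition induced by $R$.

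Third, I will conclude the Markov property of $\widehat{S_k^\mu}=\varphi_R(S_k^\mu)$ by the standard argument. For any history of classes $\widehat{s}_1,\dots,\widehat{s}_k$, condition on the underlying state $S_k^\mu=s\in\widehat{s}_k$ and apply the Markov property of $S_k^\mu$:
\begin{equation*}
\PP\big(\widehat{S^\mu_{k+1}}=\widehat{s}' \,\big|\, \widehat{S^\mu_1}=\widehat{s}_1,\dots,\widehat{S^\mu_k}=\widehat{s}_k\big) = \sum_{s\in\widehat{s}_k}\PP\big(S_k^\mu=s \,\big|\, \text{history}\big)\sum_{t\in\widehat{s}'}p_\Delta(s,t).
\end{equation*}
By the second step, the inner sum is the constant $p_\Delta(\widehat{s}_k,\widehat{s}')$ for all $s\in\widehat{s}_k$. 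The conditional weights sum to one, so the expression collapses to $p_\Delta(\widehat{s}_k,\widehat{s}')$, which depends only on the current class. This is exactly the claimed transition law.

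The main obstacle is the diagonal case (ii). There one has to check that the original diagonal term $q_\mu(s,s)$, with its self-transition cancellation, combines with the within-class off-diagonal mass to give exactly the class-level $q_\mu(\widehat{s},\widehat{s})$ as defined through $q_\mu^-(\widehat{s})$. The way through is to do all the bookkeeping in terms of $q^+$ and to note that $\sum_{\widehat{s}''}q_\mu^+(\widehat{s},\widehat{s}'')$ is a regrouping of $\sum_{s''}q_\mu^+(s,s'')$ over the partition. A minor point to state explicitly is that $\Delta$ must be chosen so that all $p_\Delta$ values are nonnegative. The class-level diagonal $1+q_\mu(\widehat{s},\widehat{s})\Delta$ then inherits nonnegativity automatically, since it is a sum of nonnegative $p_\Delta(s,t)$.
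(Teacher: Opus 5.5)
Your proposal is correct and takes essentially the paper's route: the paper obtains the proposition directly from strong (Kemeny--Snell) lumpability of the uniformised chain with respect to the partition induced by $R$, and this lumpability holds because $R$ is as strong as $\sim_\mu^{q^+}$. You make that lumpability check explicit, handling the diagonal case correctly by regrouping $\sum_{s''} q_\mu^+(\widehat{s},s'')$ over classes to recover $-q_\mu^-(\widehat{s})$, and you then apply the standard argument that strong lumpability gives the Markov property of the lumped process.
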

\begin{proof}
Follows immediately from lumpability with respect to $R$. See Supplementary Material 1.1 for a direct proof.
\end{proof}
We can also prove a similar equivalence property in continuous time. Under an equivalence relation $R$ as previously specified, let $\widehat{S^\mu}:=\varphi_R\circ S^\mu$, be the stochastic process of the equivalence classes of $S^\mu$, where we recall $S^\mu$ is the CTMC induced by a CTMDP with self-transitions controlled by a DS policy $\mu$.
\begin{corollary}
\label{coro:classesAreChainsCont}
The stochastic process $\widehat{S^\mu}(t)$ is a continuous-time Markov chain over $\widehat{\mathcal{S}}$, equivalent to $\{\widehat{S^\mu_{k}}\}_{k=1}^\infty$ under uniformisation, with transition rates given by $q(\widehat{s},\widehat{s}')$ as previously defined.
\end{corollary}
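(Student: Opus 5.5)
The plan is to reduce the continuous-time statement to \autoref{prop:classesAreChains} via the standard representation of a uniformised CTMC. Fix $\Delta$ large enough that $p_\Delta$ is a valid transition matrix for $S^\mu$. By the construction of \cite{uniformisationSerfozo1979}, we may realise $S^\mu(t) = S^\mu_{N(t)}$, where $N(t)$ is a Poisson process of rate $1/\Delta$ (rate $\Lambda$, say) independent of the embedded uniformised chain $\{S^\mu_k\}$. Applying the canonical surjection pathwise gives
\[
\widehat{S^\mu}(t) = \varphi_R\bigl(S^\mu_{N(t)}\bigr) = \widehat{S^\mu_{N(t)}}.
\]
So $\widehat{S^\mu}(t)$ is the discrete process $\{\widehat{S^\mu_k}\}$ subordinated to the same independent Poisson clock.

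By \autoref{prop:classesAreChains}, $\{\widehat{S^\mu_k}\}$ is a Markov chain on $\widehat{\mathcal{S}}$ with kernel $p_\Delta(\widehat s,\widehat s') = \II\{\widehat s=\widehat s'\} + q(\widehat s,\widehat s')\Delta$. A discrete-time Markov chain run on an independent Poisson clock is a CTMC, so it remains to establish three things.

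First, the Markov property in continuous time. For $0\le t_1<\dots<t_n<t$, I will condition on $N$ and on the values $\widehat{S^\mu}(t_i)$. Then I will use independence of increments of $N$ together with the discrete Markov property of $\widehat{S^\mu_k}$, which comes from \autoref{prop:classesAreChains} and holds with respect to the natural filtration of the lumped chain. This shows the conditional law of $\widehat{S^\mu}(t)$ depends only on $\widehat{S^\mu}(t_n)$ and $t-t_n$.

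Second, the transition function. Write $\widehat P_\Delta$ for the matrix $(p_\Delta(\widehat s,\widehat s'))$ and $\widehat Q$ for the matrix $(q(\widehat s,\widehat s'))$. Then
\[
\PP(\widehat{S^\mu}(t)=\widehat s'\mid \widehat{S^\mu}(0)=\widehat s)=\sum_{n\ge 0} e^{-\Lambda t}\frac{(\Lambda t)^n}{n!}\bigl[\widehat P_\Delta^{\,n}\bigr]_{\widehat s\widehat s'} .
\]
With $\Lambda=1/\Delta$ and $\widehat P_\Delta = I+\widehat Q\Delta$, this sum equals $e^{t\widehat Q}$. Differentiating at $t=0$ gives rates $q(\widehat s,\widehat s')$, matching the definitions of $q_\mu^\pm(\widehat s,\widehat s')$. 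This also shows the claimed equivalence under uniformisation, since $\widehat{S^\mu}$ is by construction the uniformisation of a CTMC with generator $\widehat Q$ at parameter $\Delta$. I also need $\widehat Q$ to be a valid generator: off-diagonal entries $q_\mu^+(\widehat s,\widehat s')\ge0$, and rows sum to zero by the definition of $q_\mu^-(\widehat s)$.

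The main obstacle is the first point. The Markov property of the lumped process must hold regardless of the initial distribution within a class, since otherwise the conditioning on $\widehat{S^\mu}(t_n)$ would hide information about the underlying state. I will handle this by using strong lumpability directly, which makes the transition probabilities out of every $s\in\widehat s$ identical. Concretely, I will condition on the full history of $S^\mu$ and show the conditional law of the next class is the same function of the current class. This is exactly what \autoref{prop:classesAreChains} supplies. Independence of $N$ then transfers the property to continuous time.
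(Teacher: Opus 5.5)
Your argument is correct and follows essentially the paper's route: the paper obtains this corollary from (strong) lumpability with respect to $R$, carried over to continuous time via uniformisation, and you make that explicit by running the lumped chain of \autoref{prop:classesAreChains} on the independent Poisson clock and summing $\sum_n e^{-t/\Delta}\frac{(t/\Delta)^n}{n!}(I+\widehat Q\Delta)^n=e^{t\widehat Q}$. One small slip: for $p_\Delta$ to be a stochastic matrix, $\Delta$ must be \emph{small} enough, namely $\Delta\le 1/\max_s|q(s,\mu(s),s)|$ (equivalently, the clock rate $1/\Delta$ must be large enough), not large enough.
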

\begin{proof}
    Follows immediately from lumpability with respect to $R$. See Supplementary Material 1.2 for a direct proof.
\end{proof}

We refer to the CTMC $\widehat{S^\mu}$ as the \emph{class-aggregated} CTMC. We now wish to find the relationship between any stationary distribution of the original controlled CTMDP, $\bm{\pi}^\mu$, and a stationary distribution of our class-aggregated CTMC.

\begin{proposition}\label{prop:aggregatedCTMChasExpectedDistro}
If $\bm{\pi}^\mu$ is a stationary distribution for a CTMC, then a class-aggregated CTMC $\widehat{S}^\mu$ under a suitable equivalence relation R has stationary distribution $\widehat{\bm{\pi}^\mu}$ satisfying $\widehat{\pi^\mu}(\widehat{s}) = \sum_{s\in\widehat{s}}\pi^\mu(s)$.
\end{proposition}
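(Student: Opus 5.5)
The plan is to verify directly that the aggregated vector $\widehat{\bm{\pi}^\mu}$, defined by $\widehat{\pi^\mu}(\widehat{s}) = \sum_{s\in\widehat{s}}\pi^\mu(s)$, satisfies the global balance equations of the class-aggregated CTMC $\widehat{S^\mu}$ from \autoref{coro:classesAreChainsCont}, and that it is a probability distribution. Nonnegativity and normalisation are immediate, since the classes partition $\mathcal{S}$, so $\sum_{\widehat{s}}\widehat{\pi^\mu}(\widehat{s}) = \sum_{s}\pi^\mu(s) = 1$. The work lies in balance, which I will check in the generalised form with $q^+$ and $q^-$, because the aggregated rates are built from those.

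\textbf{Step 1: rewrite stationarity of $\bm{\pi}^\mu$.} Stationarity $\sum_{s}\pi^\mu(s)q_\mu(s,s')=0$ for every $s'$ can be written, using $q=q^++q^-$, as
\[
\sum_{s}\pi^\mu(s)q_\mu^+(s,s') = \pi^\mu(s')\sum_{s''}q_\mu^+(s',s'').
\]
Summing this over $s'\in\widehat{s}'$ gives
\[
\sum_{s}\pi^\mu(s)\,q_\mu^+(s,\widehat{s}') = \sum_{s'\in\widehat{s}'}\pi^\mu(s')\sum_{s''}q_\mu^+(s',s''),
\]
where $q_\mu^+(s,\widehat{s}') := \sum_{s'\in\widehat{s}'}q_\mu^+(s,s')$.

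\textbf{Step 2: use the constancy of rates on classes.} Because $R$ is as strong as $\sim_\mu^{q^+}$, the quantity $q_\mu^+(s,\cdot)$ depends only on $\widehat{s}$. Hence $q_\mu^+(s,\widehat{s}')=q_\mu^+(\widehat{s},\widehat{s}')$, and $\sum_{s''}q_\mu^+(s',s'') = -q_\mu^-(\widehat{s}')$ for every $s'\in\widehat{s}'$.

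\textbf{Step 3: regroup.} Splitting the left-hand sum of Step 1 over classes turns it into $\sum_{\widehat{s}}\widehat{\pi^\mu}(\widehat{s})\,q_\mu^+(\widehat{s},\widehat{s}')$. The right-hand side becomes $-\widehat{\pi^\mu}(\widehat{s}')\,q_\mu^-(\widehat{s}')$. Rearranging gives $\sum_{\widehat{s}}\widehat{\pi^\mu}(\widehat{s})\,q_\mu(\widehat{s},\widehat{s}')=0$, which is exactly stationarity for the rates of \autoref{coro:classesAreChainsCont}.

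\textbf{Expected obstacle.} The delicate point is bookkeeping of self-transitions and of intra-class transitions. Both the original and the aggregated generator treat these as cancelling, so the argument must use the generalised rates consistently: the aggregated diagonal $q_\mu^-(\widehat{s})$ includes $q_\mu^+(\widehat{s},\widehat{s})$, and the aggregated standard diagonal is $q_\mu^+(\widehat{s},\widehat{s})+q_\mu^-(\widehat{s})$. I will check this explicitly, since it is where an error would hide.

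\textbf{Alternative route.} One could instead argue via the uniformised chain of \autoref{prop:classesAreChains}, showing $\widehat{\bm{\pi}^\mu}P_\Delta=\widehat{\bm{\pi}^\mu}$ by the same regrouping. Equivalently, one can note that $\widehat{\pi^\mu}(\widehat{s})$ is the limiting or stationary probability of the event $\{S^\mu\in\widehat{s}\}$, since $\widehat{S^\mu}=\varphi_R\circ S^\mu$: starting $S^\mu$ from $\bm{\pi}^\mu$, the marginals of $\widehat{S^\mu}$ are constant and equal to $\widehat{\bm{\pi}^\mu}$. This probabilistic argument is a short cross-check on the algebra above.
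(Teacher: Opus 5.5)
Your proposal is correct and is essentially the paper's argument. In the main text the paper only says the result follows from strong lumpability of the chain with respect to $R$ (in the Kemeny--Snell sense, carried to continuous time by uniformisation) and defers a direct proof to the supplement. Your Steps 1--3 are exactly the regrouping computation behind that lumpability fact, written at the generator level with the paper's $q_\mu^+$/$q_\mu^-$ bookkeeping.

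The two identities you rely on are $q_\mu^+(s,\widehat{s}')=q_\mu^+(\widehat{s},\widehat{s}')$ for $s\in\widehat{s}$, and $\sum_{s''}q_\mu^+(s',s'')=-q_\mu^-(\widehat{s}')$ for $s'\in\widehat{s}'$. Both hold because $R$ is as strong as $\sim_\mu^{q^+}$, so your derivation of $\sum_{\widehat{s}}\widehat{\pi^\mu}(\widehat{s})\,q_\mu(\widehat{s},\widehat{s}')=0$ goes through. Combined with \autoref{coro:classesAreChainsCont}, this gives the claim.
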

\begin{proof}
Follows immediately from lumpability with respect to $R$. See Supplementary Material 1.3 for a direct proof.   
\end{proof}
This proposition can naturally be extended from the setting of controlled CTMDPs to the solutions $\pi(s,a)$ of OP that correspond to DS policies. Let $\mu$ be such a policy and let $R$ be an equivalence relation as strong as $\sim^{q^+}$ with equivalence classes $\widehat{s,a}$, and let $R_\mu$ be its $\mu$-adapted equivalence relation with classes $\widehat{s}$. We define the \emph{$R$-aggregated state-action frequencies} as $\widehat{\pi}(\widehat{s,a}) := \sum_{(s,a)\in\widehat{s,a}}\pi(s,a)$.

\begin{lemma}\label{lemma:s-aHat}
    Let $\pi(s,a)$ be the basic feasible solution to the original problem \emph{OP} corresponding to $\mu$, so $\pi(s,\mu(s)) = \pi^\mu(s)$ and $\pi(s,a) = 0$ for $a\neq\mu(s)$. The $R$-aggregated state-action frequencies satisfy the following equality:
    $$\widehat{\pi}(\widehat{s,a}) =  \begin{cases}
         \widehat{\pi^\mu}(\widehat{s}), & \text{if there exists $(s',a')\in\widehat{s,a}$ such that $a'=\mu(s')$}, \\
         0, & \text{otherwise.} 
    \end{cases}$$
\end{lemma}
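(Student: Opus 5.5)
The proof is a direct computation from the definition $\widehat{\pi}(\widehat{s,a}) = \sum_{(s',a')\in\widehat{s,a}}\pi(s',a')$, using the fact that $\pi$ vanishes off the graph of $\mu$. First I will rewrite the sum by discarding all zero terms:
$$\widehat{\pi}(\widehat{s,a}) = \sum_{(s',a')\in\widehat{s,a}:\,a'=\mu(s')}\pi^\mu(s').$$
This immediately settles the ``otherwise'' case. If no $(s',a')\in\widehat{s,a}$ has $a'=\mu(s')$, the sum is empty and $\widehat{\pi}(\widehat{s,a})=0$.

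For the first case, I will show that the index set $\{s' : (s',\mu(s'))\in\widehat{s,a}\}$ is exactly a single $R_\mu$-class. Fix some $(s_0,\mu(s_0))\in\widehat{s,a}$; I read $\widehat{s}$ in the statement as the $R_\mu$-class of this $s_0$. By the definition of the $\mu$-adapted relation,
$$s'\,R_\mu\,s_0 \iff (s',\mu(s'))\,R\,(s_0,\mu(s_0)) \iff (s',\mu(s'))\in\widehat{s,a}.$$
Hence the map $s'\mapsto(s',\mu(s'))$ is a bijection from the $R_\mu$-class of $s_0$ onto the set of pairs in $\widehat{s,a}$ lying on the graph of $\mu$. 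The sum therefore becomes $\sum_{s'\in\widehat{s_0}}\pi^\mu(s')$. By \autoref{prop:aggregatedCTMChasExpectedDistro}, this equals $\widehat{\pi^\mu}(\widehat{s_0})$. Applying that proposition requires $R_\mu$ to be as strong as $\sim_\mu^{q^+}$, which holds: if $R$ is as strong as $\sim^{q^+}$ on $\mathcal{SA}$, then $s\,R_\mu\,s'$ implies $(s,\mu(s))\sim^{q^+}(s',\mu(s'))$, which is precisely $s\sim_\mu^{q^+}s'$.

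The main obstacle is interpretive rather than technical: the statement writes $\widehat{s}$ without saying which class is meant, since the given $s$ in $\widehat{s,a}$ need not itself satisfy $a=\mu(s)$. I will make this precise by taking $\widehat{s}$ to be the $R_\mu$-class of any $s'$ with $(s',\mu(s'))\in\widehat{s,a}$. The bijection argument above shows that this class does not depend on which such $s'$ is chosen, so the formula is well defined.
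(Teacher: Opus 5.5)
Your proof is correct and matches the paper's in the main case: both rewrite $\widehat{\pi}(\widehat{s,\mu(s)})$ as $\sum_{s':(s',\mu(s'))\in\widehat{s,\mu(s)}}\pi^\mu(s')=\widehat{\pi^\mu}(\widehat{s})$. Your bijection argument and your check that $R_\mu$ is as strong as $\sim_\mu^{q^+}$ make explicit what the paper leaves implicit. The only difference is the ``otherwise'' case: the paper argues indirectly that the classes of the form $\widehat{s,\mu(s)}$ already carry total mass one, so the remaining nonnegative classes must be zero, whereas you note directly that every term in the sum vanishes, which is simpler and needs neither normalisation nor nonnegativity.
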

\begin{proof}
See \autoref{app:lemma:s-aHat}.
\end{proof} 

In the same way that generalised transition rates only depend on stochastic equivalence classes, cost rates $\bm{c}(s)$ depend only on the cost equivalence classes of $s$, so we can write $\bm{c}(\widehat{s,a}) := \bm{c}(s,a)$ for any $(s,a)\in\widehat{s,a}$, and for a fixed policy $\mu$ we can write $\bm{c}^\mu(\widehat{s}) := \bm{c}^\mu(s)$ for any $s\in\widehat{s}$. We can therefore associate these costs with the Markov chain $\widehat{S^\mu}$ and obtain a corollary to \autoref{prop:aggregatedCTMChasExpectedDistro}. 
\begin{corollary}
    Consider a CTMDP with self-transitions $\mathfrak{C}$. Let $\widehat{\mathcal{S}}$ be the partition of the state space $\mathcal{S}$ with respect to an equivalence relation $R$ that is as strong as $\sim_\mu$, i.e., $\mu$-adapted total equivalence. The class-aggregated CTMC has the same long-run average cost as the original controlled CTMDP.
\end{corollary}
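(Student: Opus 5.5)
The plan is to write the long-run average cost of the controlled CTMDP as a stationary expectation, and then regroup the sum over states by equivalence class. Fix the DS policy $\mu$ and a stationary distribution $\bm{\pi}^\mu$ of $S^\mu$; in the unichain case it is unique, and otherwise we work with the stationary distribution determined by the initial state, class by class. For a finite-state CTMC, the long-run average vector cost of the original controlled process is
\begin{equation}
\bm{g}^\mu = \sum_{s\in\mathcal{S}} \bm{c}^\mu(s)\,\pi^\mu(s),
\end{equation}
where $\bm{c}^\mu(s) := \bm{c}(s,\mu(s))$. This is the value of the objectives of OP at the basic feasible solution corresponding to $\mu$.

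First, I check that the hypotheses of the earlier results apply. Since $R$ is as strong as $\sim_\mu$, and $\sim_\mu$ is itself as strong as $\sim_\mu^{q^+}$ by definition of total equivalence, $R$ is a suitable relation for \autoref{coro:classesAreChainsCont} and \autoref{prop:aggregatedCTMChasExpectedDistro}. Hence $\widehat{S^\mu}$ is a CTMC on $\widehat{\mathcal{S}}$, and it has stationary distribution $\widehat{\pi^\mu}(\widehat{s}) = \sum_{s\in\widehat{s}}\pi^\mu(s)$.

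Second, $R$ is also as strong as $\sim^{\bm{c}}_\mu$, so $\bm{c}^\mu$ is constant on each class. The aggregated cost $\bm{c}^\mu(\widehat{s})$ is therefore well defined. Partitioning the sum by classes gives
\begin{equation}
\bm{g}^\mu = \sum_{\widehat{s}\in\widehat{\mathcal{S}}}\sum_{s\in\widehat{s}}\bm{c}^\mu(s)\pi^\mu(s) = \sum_{\widehat{s}\in\widehat{\mathcal{S}}}\bm{c}^\mu(\widehat{s})\sum_{s\in\widehat{s}}\pi^\mu(s) = \sum_{\widehat{s}\in\widehat{\mathcal{S}}}\bm{c}^\mu(\widehat{s})\,\widehat{\pi^\mu}(\widehat{s}).
\end{equation}
The right-hand side is the long-run average cost of the class-aggregated CTMC with cost rates $\bm{c}^\mu(\widehat{s})$.

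An alternative pathwise argument avoids any concern about which stationary distribution is selected. Since $\bm{c}^\mu(S^\mu(t)) = \bm{c}^\mu(\widehat{S^\mu}(t))$ for every $t$, the cost trajectories of the two processes coincide. Their time-averaged expected costs are then equal for every horizon $T$, and so their limits as $T\to\infty$ are equal too.

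The main subtlety, and the step I expect to be the obstacle, is the non-unichain case, where stationary distributions are not unique. There, I would either rely on the pathwise identity above or note that the aggregated stationary distribution picked out by \autoref{prop:aggregatedCTMChasExpectedDistro} is exactly the image of the relevant limiting distribution of $S^\mu$.
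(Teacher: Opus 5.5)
Your proposal is correct and matches the paper's proof: the paper also partitions $\sum_{s}\bm{c}^\mu(s)\pi^\mu(s)$ by the classes of $R$, pulls out the class-constant cost $\bm{c}^\mu(\widehat{s})$, and identifies $\sum_{s\in\widehat{s}}\pi^\mu(s)$ with $\widehat{\pi^\mu}(\widehat{s})$ via \autoref{prop:aggregatedCTMChasExpectedDistro}. Your explicit check that $R$ is as strong as both $\sim_\mu^{q^+}$ and $\sim_\mu^{\bm{c}}$, and your pathwise remark (which avoids choosing a stationary distribution in the non-unichain case), are sound additions that the paper leaves implicit.
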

\begin{proof}
    We see that: $$\sum_{s\in\mathcal{S}}\bm{c}^\mu(s)\pi^\mu(s) = \sum_{\widehat{s}\in\widehat{S}}\sum_{s\in\widehat{s}}\bm{c}^\mu(\widehat{s})\pi^\mu(s)\\
        =\sum_{\widehat{s}\in\widehat{S}}\bm{c}(\widehat{s})\widehat{\pi^\mu}(\widehat{s}),$$
    so both the controlled CTMDP and the new CTMC with costs have the same long-run average costs.
\end{proof}
Intuitively, we have found that for any fixed DS policy, we can aggregate together any states with the same stochastic dynamics and costs, and the resulting aggregated CTMC behaves as desired. \autoref{paper2:model:impulsive} now follows with a formulation for CTMDPs with impulsive control.
\subsection{Impulsive CTMDPs}
\label{paper2:model:impulsive}
\subsubsection{Model}
\label{paper2:model:impulsive:model}
We wish to model \emph{impulsive control}, meaning that actions induce instantaneous transitions, and many such actions can be taken back-to-back in the same instant. This requires the development of some additional structure and assumptions on the CTMDP model:
\begin{enumerate}
    \item We assume that {\color{black}each state $s$ has some zero action} $0\in\mathcal{A}(s)$, and this is the only action that allows time to advance, i.e., it is the one gradual action for that state, using the terminology of \cite{dufour2015impulsive}. All other actions will be impulsive.
    \item We require all state-action pairs $(s,a)$ with non-zero $a$ to ``emulate'' a unique ``post-decision'' state denoted by $s\oplus a\in\mathcal{S}$ under the zero action for that state, where $\oplus:\mathcal{SA}\to\mathcal{S}$ is a binary operation satisfying $s\oplus0=s$ for all $s$. By emulate, we mean the following relations are true:
    \begin{align}
    q^+(s,a,s') &= q^+(s\oplus a, 0, s'), \label{impulsiveRates}\\
    \bm{c}(s,a) &= \bm{c}(s\oplus a, 0). \label{impulsiveCosts}
    \end{align}
    \item For any state $s$ we have $a_1\in\mathcal{A}(s)$ and some $a_2\in\mathcal{A}(s\oplus a_1)$, then there exists a unique action which we denote $a_1 \cdot a_2$, and this action must satisfy $s\oplus (a_1\cdot a_2) = (s\oplus a_1)\oplus a_2$.
\end{enumerate}

Assumption (1) ensures that there is only one way to allow ``time to pass'' in any given state, i.e., to stay in that state, as we are assuming that all possible controls or interventions are strictly impulsive. Generalisation to multiple gradual actions may be possible, but this is beyond the scope of this work. Assumption (2) emulates one-step impulsive control by ensuring that any state-action pair has the same stochastic dynamics and costs as some ``target'' state under that state's gradual action. As such, the process does not \emph{truly} instantaneously transition upon taking an impulsive action, but otherwise acts like it has. {\color{black}This notion of a post-decision state can be thought of as the continuous-time equivalent of the post-decision states found in \cite{powell2011adp}, which for some problems allows us to consider the ``state'' of the system after an action has been taken, but before the next random transition has been observed.} Additionally, as a result of the relations over $q$ and $\bm{c}$, it is enough to specify transition rates under the zero action $q^+(s,s') := q^+(s,0,s')$ and cost rates under the zero action $\bm{c}(s) := \bm{c}(s,0)$. {\color{black}Assumption (3) then ensures that the sequencing of a finite number of impulsive actions in a single instant, as allowed by other models of impulsive control, can always be emulated by a single ``sequenced'' action.} Again, the process does not truly jump between multiple states during that instant, but it otherwise acts as though it has landed in that final state. Without confusion, we can simplify the notion of Assumption (3) by writing $s\oplus a_1 \oplus a_2$ instead of $(s\oplus a_1)\oplus a_2$ or $s\oplus(a_1\cdot a_2)$ whenever it is clear that $s$ is a state and $a_1$ and $a_2$ are actions. Additionally, we may also write $a_1\oplus a_2$ instead of $a_1\cdot a_2$.  We additionally assume \emph{associativity} of $\oplus$ over actions, that is $(a_1\oplus a_2)\oplus a_3 = a_1\oplus(a_2\oplus a_3)$ always holds, so we can write $a_1 \oplus a_2 \oplus a_3$. 

{\color{black}
\begin{remark}
    If $\mathcal{A}(s)=\mathcal{A}$ for all $s\in\mathcal{S}$, we can say that $(\mathcal{A,\oplus})$ is a \emph{monoid} (i.e., group without inverses) that (right-)acts on $\mathcal{S}$ such that $q^+$ and $\bm{c}$ obey the monoid action in the expected way.
\end{remark}
}
Putting this all together, an impulsive CTMDP is a tuple:
$$\mathfrak{C} = \langle\mathcal{S},\left(\mathcal{A}(s)\right)_{s\in\mathcal{S}},q^+:\mathcal{S}\times\mathcal{S}\to\RR_{\geq0}, \bm{c}:\mathcal{S}\to\RR^k, \oplus:\mathcal{SA}\to\mathcal{S}\rangle,$$
from which a CTMDP with self-transitions can be recovered by ensuring $q^+(s,a,s')$ and $\bm{c}(s,a)$ follow the relations \eqref{impulsiveRates} and \eqref{impulsiveCosts}, respectively.

We now consider what an optimal policy for such a process might look like. If under an optimal policy $\mu$ we take action $a\neq 0$ under state $s$, then it seems intuitive that action $0$ should be taken in state $s \oplus a$, regardless of whether this state is reached via an impulsive action or a gradual action. On the other hand, if an optimal policy takes an impulsive action $a'$ in state $s \oplus a$, then it makes sense that it should take action $a\oplus a'$ in state $s$. In other words, we expect optimal policies to choose actions that result in instantaneous transitions to the most desirable states (those in which we allow time to pass), rather than choosing actions leading to states that they wish to exit from immediately. We formalise this notion by defining the property of \emph{coherence}.
\begin{definition}[Coherent Policy]
    Let $\mu$ be a DS policy for an impulsive CTMDP. We say $\mu$ is a \emph{coherent policy} if, for every state $s$ such that $\mu(s)=a\neq0$, we have $\mu(s\oplus a) = 0$.
\end{definition}
We will later prove that there always exists a coherent optimal policy, and that this policy can easily be constructed from a non-coherent DS optimal policy (if one exists). This will be useful because a generic solution algorithm for CTMDPs (linear programming, value iteration, etc.) may not always return a coherent policy, but to prove this we need to understand the behaviour of non-coherent policies. In general, it is possible for non-coherent policies not only to have $a_2=\mu(s\oplus a_1)\neq 0$, but also to have $a_3=\mu(s\oplus a_1\oplus a_2)\neq 0$ and so on, forming a ``chain'' of non-zero impulsive actions. 
The next definition formalises this idea.
\begin{definition}[Chain]
    Let $\mu$ be a DS policy for an impulsive CTMDP. Let $\{s_n\}_{n=1}^\infty$ be the sequence of states defined by $s_1=s$ and $s_{n} = s_{n-1}\oplus\mu(s_{n-1})$ for $n\geq 2$. Let $\{a_n\}_{n=1}^\infty$ be the sequence of actions defined by $a_n=\mu(s_n)$. We call $\{a_n\}_{n=1}^\infty$ the \emph{chain of $s$ under $\mu$}, and denote it by $\text{Chain}^\mu(s)$. We call $\{s_n\}_{n=1}^\infty$ the \emph{state chain of $s$ under $\mu$}. We also define the \emph{degree} of a chain by the number of non-zero actions in the chain.
\end{definition}
Using the notion of a chain, we can equivalently define coherent policies as DS policies in which the degree of the chain for every state is at most one. A chain can take on only two possible forms: a finite number of non-zero actions followed by an infinite sequence of zeros (i.e., $\{a_1,\dots,a_N,0,0,0,\dots\}$); or an infinite sequence of non-zero actions. This is due to the fact that if $\mu(s_n) = 0$ for some $n\geq 1$, then $s_{n+1}=s_n$ and hence $\mu(s_m) = \mu(s_n)$ for all $m\geq n$. The infinite sequence will always eventually enter into a loop, as the state space is finite and the sequence is infinite and deterministic, so it must eventually loop back on itself. If the state chain of $s$ repeats $s$ for index $n>1$, we call $s$ a \emph{looping state}. From this understanding of chains, we may suggest a method for constructing a coherent optimal policy from a non-coherent optimal policy, which we state as a theorem.
\begin{theorem}
\label{claim:coherentFromNonCoherent}
Let $\mu$ be a non-coherent optimal policy for an impulsive CTMDP, implying that there exists at least one state $s$ such that $\mu(s)\neq 0$ and $\mu(s\oplus\mu(s))\neq0$. A coherent optimal policy can be constructed following this two-step process:
\begin{enumerate}
    \item Set $\mu'(s) = \begin{cases}
         \mu(s), & s \text{ is not looping,}  \\
         0, & s \text{ is looping.}
    \end{cases}.$ \\
    The policy $\mu'$ is optimal.
    \item Set $\mu''(s) = a_1 \oplus \dots \oplus a_{N(s)}$, where $\{a_n\}_{n=1}^\infty$ is the chain of $s$ under $\mu'$ and $N(s)$ is the degree of that chain. This policy $\mu''$ is optimal and coherent.
\end{enumerate}
\end{theorem}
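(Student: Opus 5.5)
The plan is to avoid reasoning about the trajectories of the controlled chain directly. Instead I will work with the average-cost optimality equations of the (scalarised) CTMDP and use the impulsive structure to compare action values. Since optimality of a DS policy for the multi-objective problem is understood through a mixed-objective scalarisation (as in the discussion following OP), I fix a weight vector $\bm{w}$ for which $\mu$ is optimal, and work with the scalar cost $c_{\bm{w}}(s)=\bm{w}^\top\bm{c}(s)$. Weak communication gives a constant optimal gain $g$ and a bias $h$ satisfying
\[
0=\min_{a\in\mathcal{A}(s)} Q(s,a),\qquad Q(s,a):=c_{\bm{w}}(s,a)-g+\sum_{s'}q(s,a,s')h(s'),
\]
and $\mu$ attains the minimum in every state. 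This last point is the step I expect to need care: an optimal $\mu$ must attain the minimum at least on its recurrent states, and if necessary one first replaces $h$ by a bias for which $\mu$ is greedy everywhere. I then want to conclude that any policy that is greedy with respect to $(g,h)$ is optimal; I would cite the standard verification theorem for unichain/weakly-communicating models \citep[Chapter 9]{puterman2014markov}.

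The key structural observation is that the action value depends only on the post-decision state. By \eqref{impulsiveRates} and \eqref{impulsiveCosts}, $(s,a)\sim(s\oplus a,0)$, so $c(s,a)=c(s\oplus a,0)$ and $q^+(s,a,\cdot)=q^+(s\oplus a,0,\cdot)$. The standard rates are then $q(s,a,s')=q^+(s,a,s')-\II\{s=s'\}\sum_{s''}q^+(s,a,s'')$. The diagonal subtraction acts on $h(s)$ rather than $h(s\oplus a)$, so the exact identity is
\[
Q(s,a)=Q(s\oplus a,0)+\Big(\textstyle\sum_{s''}q^+(s\oplus a,0,s'')\Big)\big(h(s\oplus a)-h(s)\big),
\]
and the correction term has to be tracked. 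To sidestep it I will write the optimality equation in the equivalent uniformised/fixed-point form
\[
h(s)=\min_a \Big[\,c(s,a)-g+\sum_{s'}q^+(s,a,s')h(s')\Big]\Big/\Lambda(s,a),
\qquad \Lambda(s,a):=\sum_{s''}q^+(s,a,s'').
\]
In this form the bracketed quantity $T(s,a)$ depends only on the total-equivalence class, so $T(s,a)=T(s\oplus a,0)$. By Assumption (3), every action available at $s\oplus a$ is represented at $s$ as $a\oplus a'$. Hence $h(s)\le h(s\oplus a)$ for all $a\in\mathcal{A}(s)$.

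\emph{Step 1 (looping states).} Let $s_1,\dots,s_m$ be a loop of the state chain under $\mu$, with $s_{i+1}=s_i\oplus\mu(s_i)$ and $s_{m+1}=s_1$. Monotonicity gives $h(s_1)\le h(s_2)\le\dots\le h(s_m)\le h(s_1)$, so $h$ is constant on the loop. Greediness of $\mu$ gives $h(s_i)=T(s_i,\mu(s_i))=T(s_{i+1},0)$, and $T(s_{i+1},0)$ is the value of action $0$ at $s_{i+1}$. Since $h(s_{i+1})=h(s_i)$, the action $0$ also attains the minimum at every loop state. Non-looping states keep their greedy actions, so $\mu'$ is greedy with respect to $(g,h)$, and it is therefore optimal by the verification theorem. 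Moreover, every chain under $\mu'$ is finite: a chain that never reaches a zero action would have to enter a loop, but all loop states now take action $0$.

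\emph{Step 2 (collapsing chains).} For each $s$, the chain under $\mu'$ has finite degree $N(s)$ and ends at a state $t=s\oplus a_1\oplus\dots\oplus a_{N(s)}$ with $\mu'(t)=0$; associativity makes $\mu''(s)$ well-defined. Coherence of $\mu''$ follows because $s\oplus\mu''(s)=t$ has a chain of degree $0$, so $\mu''(t)=0$. For optimality, greediness of $\mu'$ along the chain gives $h(s_n)=T(s_{n+1},0)\ge h(s_{n+1})$, and monotonicity gives the reverse inequality, so $h$ is constant along the chain. Therefore $T(s,\mu''(s))=T(t,0)=h(t)=h(s)$, so $\mu''$ is greedy and hence optimal. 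The main obstacle is the first paragraph's reduction: making the greedy-implies-optimal step rigorous under only weak communication and multiple objectives. I would handle it by scalarising, restricting to a unichain optimal $\mu$, and checking that the modified policies remain unichain, or else invoking the multichain optimality equations.
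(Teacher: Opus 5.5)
Your Steps 1 and 2 are correct and are essentially the paper's argument. The paper's $Q^\mu(s,a)=(c(s,a)-g^\mu)t^+(s,a)+\EE_{s'\sim q^+(s,a,\cdot)}h^\mu(s')$ is your normalised action value. Its lemma $Q^\mu(s,a)=Q^\mu(s\oplus a,0)$ is your post-decision observation. Its one-step sequencing lemma, iterated through $\text{Seq}_k\{\mu\}$, rests on $Q^*(s,a_1)\le Q^*(s,a_1\oplus a_2)$, which is the same use of Assumption (3) as your monotonicity $h(s)\le h(s\oplus a)$. Showing that $h$ is constant along loops and chains packages this slightly more cleanly. It avoids the paper's inference $a_1\oplus\dots\oplus a_{n-1}=0$ from $s\oplus a_1\oplus\dots\oplus a_{n-1}=s$, since only the post-decision identity is needed. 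It also checks finiteness of $\mu'$-chains and coherence of $\mu''$ explicitly.

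The genuine gap is your first paragraph. Your argument needs $\mu(s)$ to minimise the action value at \emph{every} state, in particular at post-decision states $s\oplus\mu(s)$. The paper tacitly takes this as the meaning of ``optimal'': it passes from ``$\text{Seq}_k\{\mu\}$ is optimal'' to ``$\text{Seq}_k\{\mu\}(s)$ is optimal for $s$''. You propose to recover it from gain-optimality by re-choosing $h$. That is impossible, and for a merely average-optimal $\mu$ the theorem is false.

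Here is a counterexample.
\begin{itemize}
\item Take $\mathcal{S}=\{1,2,3\}$, $\mathcal{A}(1)=\{0,1,2\}$, $\mathcal{A}(2)=\{0,1\}$, $\mathcal{A}(3)=\{0\}$ and $s\oplus a=s+a$.
\item Set $q^+(s,0,1)=1$ for every $s$, with all other rates zero, and costs $c(1)=1$, $c(2)=0$, $c(3)=100$.
\item State $1$ is absorbing under every action, so the model is unichain.
\item The policy $\mu=(1,1,0)$ makes state $1$ emulate $(2,0)$. It therefore has gain $0$ from every initial state and is average-optimal.
\item $\mu$ is non-coherent, and neither $1$ nor $2$ is looping. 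Hence $\mu'=\mu$ and $\mu''=(2,1,0)$.
\item $\mu''$ makes state $1$ emulate $(3,0)$, so its gain is $100$.
\item The optimality equation forces $g=0$, $h(2)=h(1)$ and $h(3)=h(1)+100$. So $h$ is unique up to a constant, and $\mu(2)=1$ is greedy for no admissible $h$.
\end{itemize}
Because this example is unichain, neither restricting to unichain policies nor invoking multichain equations can rescue the reduction. The mechanism is general. The post-decision state of a recurrent state is typically never visited, so gain-optimality puts no constraint on $\mu$ there. The same holds for a policy read off an LP solution of OP, which is unconstrained where $\pi(s,\cdot)=0$. Yet $\mu''$ imports that unconstrained action into the recurrent state.

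The fix is to replace your first paragraph with an explicit hypothesis: $\mu$ is greedy at every state with respect to some constant-gain solution $(g,h)$ of the optimality equation. This holds, for instance, for bias-optimal policies or the output of policy iteration.
\begin{itemize}
\item You then need only the easy verification direction.
\item Average $c(s,\mu''(s))-g+\sum_{s'}q(s,\mu''(s),s')h(s')=0$ against the limiting distribution of $\mu''$ from any initial state. The balance equations kill the $h$ term, giving gain $g$.
\item The inequality form of the same equation shows that no policy does better.
\item So no unichain check on $\mu'$ or $\mu''$ is required.
\end{itemize}
Both your normalised equation and the paper's $t^+(s,a)$ also tacitly assume $\Lambda(s,a)>0$.
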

In short, we claim that coherent optimal policies can be constructed from non-coherent optimal policies by eliminating loops and aggregating chains of actions together. We defer the proof of \autoref{claim:coherentFromNonCoherent} to \autoref{paper2:model:impulsive:theory}. However, we \emph{do not} suggest that this extends well to suboptimal policies that result from the use of approximate methods such as reinforcement learning or bespoke heuristics. 
\begin{remark}
\label{claim:doNotSequenceSubOpt}
Let $a_1$ be a suboptimal action for $s$ and let $a_2$ be a suboptimal action for $s\oplus a_1$. The bound on suboptimality is multiplicatively worse for the sequenced action $a_1\oplus a_2$. In general, the bound on the error grows exponentially in the degree of the sequencing. 
\end{remark}
A more rigorous statement and proof of \autoref{claim:doNotSequenceSubOpt} is given in \autoref{app:boundExample}. As such, one should proceed with caution when considering the use of sequencing on suboptimal non-coherent policies obtained from approximate methods (reinforcement learning, heuristics, etc.). 
The remainder of this subsection provides the necessary mathematical treatment of \autoref{claim:coherentFromNonCoherent}, alongside other analysis of impulsive CTMDPs. 

\subsubsection{Theory}
\label{paper2:model:impulsive:theory}
The following lemma concerns the relative action value functions $Q^\mu$.
\begin{lemma}
    Consider an impulsive CTMDP. For any policy $\mu$ with constant gain $g^\mu$ and any state-action pair $(s,a)$, we have $Q^\mu(s,a) = Q^\mu(s\oplus a, 0)$.
\end{lemma}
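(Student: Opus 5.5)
We work from the standard definition of the relative action value function for a CTMDP under the long-run average criterion. For a DS policy $\mu$ with constant gain $g^\mu$ and relative value (bias) function $h^\mu$, the action value is
\begin{equation}
    Q^\mu(s,a) = \bm{c}(s,a) - g^\mu + \sum_{s'\in\mathcal{S}} q(s,a,s')\, h^\mu(s'),
\end{equation}
where the rates are in standard form. In the multi-objective setting this is read componentwise, or after a fixed scalarisation. Since $Q^\mu(s,a)$ is determined by $\bm{c}(s,a)$, by the vector $q(s,a,\cdot)$, and by the policy-dependent quantities $g^\mu$ and $h^\mu$, the plan is to show that the pair $(s,a)$ and the pair $(s\oplus a,0)$ share the first two. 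The last two do not depend on the state-action pair at all.

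\textbf{Step 1 (costs).} By the emulation relation \eqref{impulsiveCosts}, $\bm{c}(s,a)=\bm{c}(s\oplus a,0)$.

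\textbf{Step 2 (standard-form rates).} By \eqref{impulsiveRates}, the generalised rates coincide: $q^+(s,a,\cdot)=q^+(s\oplus a,0,\cdot)$. Hence $(s,a)\sim^{q^+}(s\oplus a,0)$, and the two pairs are totally equivalent.

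We cannot conclude $q(s,a,s')=q(s\oplus a,0,s')$ directly. The negative part $q^-$ is placed on the diagonal of the \emph{originating} state, which is $s$ for one pair and $s\oplus a$ for the other. Instead, we use $\sum_{s'}q(s,a,s')=0$ to rewrite the sum:
\begin{equation}
    \sum_{s'} q(s,a,s')h^\mu(s') = \sum_{s'} q^+(s,a,s')h^\mu(s') - \Big(\sum_{s''} q^+(s,a,s'')\Big) h^\mu(s).
\end{equation}
This is the main obstacle. The expression still involves $h^\mu(s)$, whereas the corresponding expression for $(s\oplus a,0)$ involves $h^\mu(s\oplus a)$, so the two sums do not obviously agree.

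\textbf{Step 3 (resolving the obstacle).} We will fix the convention for $Q^\mu$ appropriate to the self-transition framework. Under uniformisation with parameter $\Delta$, define
\begin{equation}
    Q^\mu(s,a)=\bm{c}(s,a)-g^\mu+\sum_{s'}\big(q^+(s,a,s')+q^-(s,a,s')\big)h^\mu(s'),
\end{equation}
and then rewrite it, using $h^\mu(s)$-cancellation against the left-hand side of the Poisson equation, in the ``post-decision'' form
\begin{equation}
    \bm{c}(s,a)-g^\mu+\sum_{s'}q^+(s,a,s')h^\mu(s') - \Lambda(s,a)\,\cdot\,(\text{continuation value}).
\end{equation}
Here $\Lambda(s,a)=\sum_{s''}q^+(s,a,s'')$ is the total outflow rate.

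The natural interpretation is the one the paper intends. Taking $a$ in $s$ places the process in the emulated state $s\oplus a$ until the next event, so the continuation value subtracted is that of the post-decision state. Equivalently, $Q^\mu(s,a)$ is the expected relative cost incurred from the moment the pair $(s,a)$ is active. That quantity is determined entirely by the rate vector $q^+(s,a,\cdot)$ and the cost $\bm{c}(s,a)$, so the argument is exactly the aggregation argument already established for totally equivalent pairs, via \autoref{prop:aggregatedCTMChasExpectedDistro} and its corollary.

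Concretely, we would define $Q^\mu(s,a)$ as the expected integral of $(\bm{c}-g^\mu)$ starting from pair $(s,a)$, followed by $\mu$ after the first jump. Conditioning on the first jump, this gives
\begin{equation}
    Q^\mu(s,a)=\frac{\bm{c}(s,a)-g^\mu}{\Lambda(s,a)}+\sum_{s'}\frac{q^+(s,a,s')}{\Lambda(s,a)}h^\mu(s').
\end{equation}
Self-jumps are handled by $h^\mu$ of the landing state, with $Q^\mu(s,\mu(s))=h^\mu(s)$. This expression is a function only of the cost $\bm{c}(s,a)$ and the vector $q^+(s,a,\cdot)$. By Steps 1 and 2 it therefore equals the same expression for $(s\oplus a,0)$, which proves the lemma.

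If $\Lambda(s,a)=0$, we treat the pair as absorbing and apply the same identity in the limit, or with a dummy self-transition rate. This is permitted because $q^+(s,s)$ can be chosen freely without changing $q$.
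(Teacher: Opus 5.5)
Your closing definition $Q^\mu(s,a)=(\bm{c}(s,a)-g^\mu)/\Lambda(s,a)+\sum_{s'}q^+(s,a,s')h^\mu(s')/\Lambda(s,a)$ is exactly the paper's definition $(c(s,a)-g^\mu)t^+(s,a)+\EE_{s'\sim q^+(s,a,\cdot)}h^\mu(s')$, with $t^+(s,a)=1/\Lambda(s,a)$ (the paper's $1/q^-(s,a,s)$, up to its sign slip), and the paper concludes just as you do, directly from \eqref{impulsiveCosts} and \eqref{impulsiveRates}; you are right to distrust the rate-form $Q$, because there the two sides differ by $\Lambda(s,a)\bigl(h^\mu(s\oplus a)-h^\mu(s)\bigr)$, so the sojourn-time form is genuinely the one needed. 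The vague ``post-decision form'' paragraph and the appeal to \autoref{prop:aggregatedCTMChasExpectedDistro} can simply be dropped, and your $\Lambda(s,a)=0$ remark is not quite right: for $s\neq s\oplus a$, a dummy self-rate added at $s\oplus a$ becomes a genuine $s\to s\oplus a$ transition of the pair $(s,a)$, so $q$ does change, though the paper does not treat that degenerate case either.
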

\begin{proof}
    \begin{align*}
        Q^\mu(s,a) &= (c(s,a)-g^\mu)t^+(s,a) + \EE_{s'\sim q^+(s,a,\cdot)} h^\mu(s'), \text{ (by definition of $Q^\mu$)}\\
        &= (c(s\oplus a,0) - g^\mu)t^+(s\oplus a,0) + \EE_{s'\sim q^+(s\oplus a,0,\cdot)} h^\mu(s'),\text{ (by impulsivity)}\\
        &= Q^\mu(s\oplus a,0),
    \end{align*}    
    where $t^+(s,a) = 1/q^-(s,a,s)$ is the sojourn time of state $s$ under action $a$ when allowing for ``dummy'' transitions back to $s$, and $h^\mu$ is the bias (i.e., relative value function) of policy $\mu$.
\end{proof}
We wish to prove that, in the single objective case, there always exists a coherent policy that is optimal, and that this can be constructed by following \autoref{claim:coherentFromNonCoherent}. This then extends naturally to the multi-objective case, because for any \emph{supported} Pareto-optimal DS policy $\mu$, there exists an objective weighting $\bm{w}$ such that $\mu$ is optimal for single-objective cost function $c_{\bm{w}}(s)=\bm{w}\cdot\bm{c}(s)$, so there will then exist a coherent policy $\mu'$ that is also optimal for this weighting, implying that $\mu'$ is Pareto-optimal. The approach will require the use of \emph{sequencing operators}, which we define next. 
\begin{definition}[Sequencing Operators]
    Let $\mathcal{M}$ be the space of DS policies for an impulsive CTMDP. We call a function $\text{Seq}_k:\mathcal{M}\to\mathcal{M}$ the \emph{$k$th order sequencing operator} where $\text{Seq}_k\{\mu\}(s) = a_1\oplus\dots\oplus a_{k+1}$, and $\{a_n\}_{n=1}^\infty$ is the chain of $s$ under $\mu$. We call $\text{Seq}_k\{\mu\}$ the $k$th order sequenced policy.
\end{definition}
An equivalent and recursive definition is $\text{Seq}_1\{\mu\}(s) = \mu(s)\oplus\mu(s\oplus\mu(s))$, and $\text{Seq}_k\{\mu\}(s)=\text{Seq}_{k-1}\{\mu\}(s)\oplus\mu(s\oplus\text{Seq}_{k-1}\{\mu\}(s))$ for $k\geq 2$. This method of sequencing can be used to break loops and reduce chains to length 0 or 1, while maintaining optimality. To prove this, we must first prove that one step of sequencing maintains optimality.
\begin{lemma}
\label{lemma:OneStepSequencingIsOptimal}
    Let $Q^*$ be the optimal $Q$ value function for a single-objective impulsive CTMDP. If $a_1\neq0$ is optimal for some state $s$ and $a_2\neq0$ is optimal for $s\oplus a_1$, then $a_1\oplus a_2$ is also optimal for $s$.
\end{lemma}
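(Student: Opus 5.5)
The plan is to work entirely with the optimal relative action-value function $Q^*$ (the $Q^\mu$ of an optimal policy, with optimal gain $g^*$ and optimal bias $h^*$). I will use the preceding lemma, $Q^\mu(s,a)=Q^\mu(s\oplus a,0)$, applied to $\mu$ optimal, together with Assumption (3), $s\oplus(a_1\oplus a_2)=(s\oplus a_1)\oplus a_2$. Here ``$a$ is optimal for $s$'' means $Q^*(s,a)=\min_{b\in\mathcal{A}(s)}Q^*(s,b)$, i.e.\ $a$ attains the minimum in the average-cost optimality equation.

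The first step is to obtain the chain of equalities
$$Q^*(s,a_1\oplus a_2)=Q^*(s\oplus a_1\oplus a_2,0)=Q^*((s\oplus a_1)\oplus a_2,0)=Q^*(s\oplus a_1,a_2),$$
using the preceding lemma twice (once at $(s,a_1\oplus a_2)$ and once at $(s\oplus a_1,a_2)$) and Assumption (3) in between. Since $a_2$ is optimal for $s\oplus a_1$, the last term equals $\min_{b\in\mathcal{A}(s\oplus a_1)}Q^*(s\oplus a_1,b)$, and in particular it is at most $Q^*(s\oplus a_1,0)=Q^*(s,a_1)$. The latter equals $\min_b Q^*(s,b)$ because $a_1$ is optimal for $s$. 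Hence $Q^*(s,a_1\oplus a_2)\le \min_{b\in\mathcal{A}(s)}Q^*(s,b)$. The reverse inequality is trivial because $a_1\oplus a_2\in\mathcal{A}(s)$, which is guaranteed by Assumption (3). So equality holds and $a_1\oplus a_2$ attains the minimum, i.e.\ it is optimal for $s$.

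The main obstacle is being precise about what ``optimal'' means and ensuring the preceding lemma applies to $Q^*$. That lemma is stated for a policy $\mu$ with constant gain, so I will fix an optimal DS policy $\mu^*$, which exists for the weakly-communicating finite model with constant gain $g^*$, and take $Q^*=Q^{\mu^*}$ with bias $h^*$ solving the optimality equation. Optimality of an action is then characterised by attaining the minimum of $Q^*(s,\cdot)$, using the standard fact that any action attaining the minimum in the optimality equation yields an optimal policy when substituted. I should also note that the argument never uses $a_1,a_2\neq0$; those hypotheses only match the intended use in sequencing chains.
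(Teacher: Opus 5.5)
Your proposal is correct and is essentially the paper's argument. Both proofs establish the sandwich $Q^*(s,a_1)\le Q^*(s,a_1\oplus a_2)=Q^*(s\oplus a_1,a_2)\le Q^*(s\oplus a_1,0)=Q^*(s,a_1)$, using the identity $Q(s,a)=Q(s\oplus a,0)$ from the preceding lemma together with Assumption (3). Your concern about whether that identity applies to $Q^*$ is unnecessary: its proof uses only the impulsivity relations for $\bm{c}$ and $q^+$, so it holds for any constant $g$ and any function $h$.
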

\begin{proof}
    Suppose that there exists a state $s$ such that $s,a_1,a_2$ are as described. By optimality, $Q^*(s,a_1)\leq Q^*(s,a)$ for all states $s$ and actions $a$. For any such $s$, we have $Q^*(s\oplus a_1,0) = Q^*(s,a_1) \leq Q^*(s, a_1\oplus a_2) = Q^*(s\oplus a_1 \oplus a_2,0)$. However, because $a_2$ is optimal for $s\oplus a_1$, we also have $Q^*(s\oplus a_1 \oplus a_2,0) = Q^*(s\oplus a_1, a_2) \leq Q^*(s\oplus a_1, 0)$. Therefore $Q^*(s\oplus a_1,0) = Q^*(s\oplus a_1 \oplus a_2,0)$ and hence $Q^*(s,a_1) = Q^*(s,a_1 \oplus a_2)$, so action $a_1\oplus a_2$ is also optimal for $s$.
\end{proof}
We can use \autoref{lemma:OneStepSequencingIsOptimal} to show that for any optimal policy $\mu$, its sequenced forms are also optimal.
\begin{proposition}
\label{prop:sequencedOptimalPoliciesAreOptimal}    
Let $\mu$ be an optimal policy. Then $\text{Seq}_k\{\mu\}$ is an optimal policy for all $k$.
\end{proposition}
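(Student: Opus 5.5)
The argument is by induction on $k$, combining \autoref{lemma:OneStepSequencingIsOptimal} with the standard fact that a DS policy is optimal (for the single-objective average-cost problem) exactly when it selects a minimiser of $Q^*(s,\cdot)$ in every state. I will work with a fixed scalarised cost. The multi-objective statement then follows as described before the definition of the sequencing operators.

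To organise the induction I will use the recursive form $\text{Seq}_k\{\mu\}(s)=\text{Seq}_{k-1}\{\mu\}(s)\oplus\mu(s\oplus\text{Seq}_{k-1}\{\mu\}(s))$. For convenience I set $\text{Seq}_0\{\mu\}=\mu$, which is optimal by hypothesis. The inductive claim is the pointwise statement that, for every state $s$, the action $b_k(s):=\text{Seq}_k\{\mu\}(s)$ attains $\min_a Q^*(s,a)$. Now suppose $b_{k-1}(s)$ is optimal for $s$, and write $a=\mu(s\oplus b_{k-1}(s))$. This action is optimal for $s\oplus b_{k-1}(s)$ because $\mu$ is optimal.

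The inductive step splits into cases.
\begin{enumerate}
\item If $b_{k-1}(s)\neq 0$ and $a\neq 0$, then \autoref{lemma:OneStepSequencingIsOptimal} applies directly and shows that $b_{k-1}(s)\oplus a=b_k(s)$ is optimal for $s$.
\item If $a=0$, then $b_k(s)=b_{k-1}(s)\oplus 0$. This equals $b_{k-1}(s)$, since the post-decision state is unchanged and by uniqueness in Assumption (3) (formally I would take $0$ as a right identity for $\oplus$ on actions). So $b_k(s)$ is optimal.
\item If $b_{k-1}(s)=0$, then $s\oplus b_{k-1}(s)=s$ and $b_k(s)=0\oplus\mu(s)=\mu(s)$, which is optimal.
\end{enumerate}
Because associativity of $\oplus$ was assumed, the recursive and the explicit chain definitions agree, so $b_k(s)=a_1\oplus\dots\oplus a_{k+1}$ along $\text{Chain}^\mu(s)$.

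The main obstacle is the final step: passing from ``$\text{Seq}_k\{\mu\}$ chooses a $Q^*$-greedy action in every state'' to ``$\text{Seq}_k\{\mu\}$ is an optimal policy''. In the average-cost setting greedy choice alone need not give optimal gain under multichain structure. I would handle this through the optimality equations. If $(g^*,h^*)$ solves the average-cost optimality equation, then any DS policy attaining the minimum in every state has gain $g^*$ on its recurrent classes, by the standard argument of summing the equation against the stationary distribution, as in \citep[Chapter 9]{puterman2014markov}. This requires either the unichain assumption or a constant optimal gain; the latter holds for the weakly-communicating CTMDPs assumed throughout. I would also note that the $Q$-identity $Q^*(s,a)=Q^*(s\oplus a,0)$ from the preceding lemma is what makes the case analysis above consistent at impulsive states.
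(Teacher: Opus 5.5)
Your route is the paper's: induction on $k$ through \autoref{lemma:OneStepSequencingIsOptimal}. Your zero-action case split tidies up a point the paper skips. In fact, since $Q^*(s,a)=Q^*(s\oplus a,0)$ holds for every pair including $a=0$, one gets $Q^*(s,a_1\oplus a_2)=Q^*(s\oplus a_1,a_2)\le Q^*(s\oplus a_1,0)=Q^*(s,a_1)$ directly, so the nonzero hypotheses are never needed. Your ACOE argument for ``greedy in every state $\Rightarrow$ gain-optimal'' is also correct.

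The gap is in the \emph{other} direction of your ``standard fact''. You use it in the base case and in ``this action is optimal for $s\oplus b_{k-1}(s)$ because $\mu$ is optimal''. Gain-optimality of $\mu$ forces greediness only on states that are recurrent under $\mu$. The slack $c(s,\mu(s))-g^*+\sum_{s'}q(s,\mu(s),s')h^*(s')\ge 0$ averages to $g^\mu-g^*=0$ under $\pi^\mu$. So it must vanish where $\pi^\mu(s)>0$, but it is unconstrained elsewhere. The post-decision states $s\oplus b_{k-1}(s)$ that your induction passes through are typically transient, and an LP solution of OP leaves the actions there arbitrary. That is exactly where your induction lacks the information it needs.

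This is not cosmetic: the statement fails for gain-optimal policies. Take, as in the paper's appendix example:
\begin{itemize}
\item $\mathcal{S}=\{1,2,3\}$, $\mathcal{A}(1)=\{0,1,2\}$, $\mathcal{A}(2)=\{0,1\}$, $\mathcal{A}(3)=\{0\}$, and $s\oplus a=s+a$;
\item $q^+(s,0,1)=1$ for every $s$, with all other rates zero;
\item $c(1)=5$, $c(2)=1$, $c(3)=2$.
\end{itemize}
The model is unichain, and every policy is absorbed in state $1$, where action $a$ incurs cost rate $c(1+a)$. Hence $g^*=1$ and $\mu=(1,1,0)$ is average-optimal. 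Yet $\text{Seq}_1\{\mu\}(1)=1\oplus1=2$, so $\text{Seq}_1\{\mu\}$ has gain $c(3)=2$. The culprit is state $2=1\oplus\mu(1)$, which is transient under $\mu$ and where $Q^*(2,1)=Q^*(3,0)=h^*(1)+1>h^*(1)=Q^*(2,0)$.

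The paper's proof makes the same tacit identification (``It follows that for any state $s$, \dots is optimal for $s$''). So the proposition has to be read with ``optimal policy'' meaning \emph{conserving}: $\mu(s)\in\arg\min_a Q^*(s,a)$ for every $s$, with $Q^*$ built from a solution $(g^*,h^*)$ of the optimality equation. Under that hypothesis your pointwise induction shows $\text{Seq}_k\{\mu\}$ is again conserving, and your ACOE step then gives gain-optimality, so the proof closes. For a merely gain-optimal $\mu$, you must first replace its actions on transient states by greedy ones before sequencing.
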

\begin{proof}
    We proceed by induction. It is immediate from \autoref{lemma:OneStepSequencingIsOptimal} that $\text{Seq}_1\{\mu\}$ is optimal. Now assume $\text{Seq}_k\{\mu\}$ is optimal for some $k$. It follows that for any state $s$, $\text{Seq}_k\{\mu\}(s)$ is optimal for $s$ and $\mu(s\oplus\text{Seq}_k\{\mu\}(s))$ is optimal for $s\oplus\text{Seq}_k\{\mu\}(s)$. Hence, by \autoref{lemma:OneStepSequencingIsOptimal}, $\text{Seq}_k\{\mu\}(s)\oplus\mu(s\oplus\text{Seq}_k\{\mu\}(s))$ is also optimal for $s$, so by definition $\text{Seq}_{k+1}\{\mu\}(s)$ is optimal for $s$. Therefore, $\text{Seq}_{k+1}\{\mu\}$ is also an optimal policy.
\end{proof}
We can use \autoref{prop:sequencedOptimalPoliciesAreOptimal} to show how to break loops and aggregate the actions in chains to obtain a coherent optimal policy.
\begin{proposition}
\label{prop:loopBreakingIsOptimal}
    Let $\mu$ be an optimal policy for an impulsive CTMDP, and let $s$ be a looping state with respect to $\mu$. Then action $0$ is optimal for $s$.
\end{proposition}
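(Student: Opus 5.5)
Let $s$ be a looping state under the optimal policy $\mu$, with state chain $s_1=s, s_2,\dots$ and $s_{n+1}=s$ for some minimal $n\ge 1$. Our aim is to show that action $0$ is optimal for $s$ by exhibiting some $k$ for which $\text{Seq}_k\{\mu\}(s)$ carries $s$ back to itself. We then use the fact that the action value of a state-action pair depends only on its post-decision state.

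\textbf{Step 1: the loop is generated by a single sequenced action.} Consider the sequenced action $b:=a_1\oplus\dots\oplus a_n$, where $a_i=\mu(s_i)$. By Assumption (3) and associativity of $\oplus$, we have $s\oplus b = s_1\oplus a_1\oplus\dots\oplus a_n = s_{n+1}=s$. When $n=1$ this just says $s\oplus\mu(s)=s$. Here $b=\text{Seq}_{n-1}\{\mu\}(s)$, taking $\text{Seq}_0\{\mu\}:=\mu$ for the case $n=1$. By \autoref{prop:sequencedOptimalPoliciesAreOptimal}, $\text{Seq}_{n-1}\{\mu\}$ is optimal, so $b$ is an optimal action at $s$. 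We do not need a separate argument for the degenerate case in which some $a_i=0$: if that happened, the chain would stay at $s_i$ forever, so a return to $s$ would force $s_i=s$, and then $\mu(s)=0$ is optimal directly.

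\textbf{Step 2: compare with action $0$.} By the preceding lemma, $Q^*(s,b)=Q^*(s\oplus b,0)=Q^*(s,0)$. Since $b$ attains $\min_a Q^*(s,a)$, action $0$ attains it as well, so $0$ is optimal for $s$.

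\textbf{Main obstacle.} The delicate point is that the lemma $Q^\mu(s,a)=Q^\mu(s\oplus a,0)$ is stated for a fixed policy with constant gain, whereas here we need it for $Q^*$. Optimal policies in a weakly communicating CTMDP have constant optimal gain $g^*$ with bias $h^*$, and $Q^*$ is defined from $(g^*,h^*)$ by the same formula. The impulsivity relations \eqref{impulsiveRates} and \eqref{impulsiveCosts} therefore give the identity for $Q^*$ verbatim, which I would state explicitly. A second point to check is that ``optimal for $s$'' means attaining the minimum in the optimality equation $\min_a Q^*(s,a)$, consistent with its usage in \autoref{lemma:OneStepSequencingIsOptimal}. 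If $s$ is transient, so that optimality there is only in this Bellman sense, the argument still goes through because it is purely pointwise in $Q^*$.
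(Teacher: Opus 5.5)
Your proof is correct and follows the paper's argument. Both identify the loop-closing sequenced action $b=a_1\oplus\dots\oplus a_n$ with $s\oplus b=s$, and both invoke \autoref{prop:sequencedOptimalPoliciesAreOptimal} to show $b$ is optimal at $s$. Your final step differs slightly: the paper asserts $b=0$ directly from $s\oplus b=s$, whereas you use $Q^*(s,b)=Q^*(s\oplus b,0)=Q^*(s,0)$, which does not require $s\oplus b=s$ to force $b=0$. Your convention $\text{Seq}_0\{\mu\}:=\mu$ also covers the one-step loop, which the paper's $\text{Seq}_{n-2}$ indexing leaves implicit.
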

\begin{proof}
If $s$ is a looping state, then it reappears at some index $n>1$ in its state chain, so $s\oplus a_1\oplus\dots\oplus a_{n-1} = s$ and therefore $a_1\oplus\dots\oplus a_{n-1} = 0$. Action $a_1\oplus\dots\oplus a_{n-1}$ is also optimal for $s$ because $\text{Seq}_{n-2}\{\mu\}$ is an optimal policy. Therefore 0 is optimal for $s$.    
\end{proof}
As such, for any optimal policy $\mu$, the policy $\mu'$ described in \autoref{claim:coherentFromNonCoherent} is also optimal. The second part of the claim is simply a corollary of \autoref{prop:sequencedOptimalPoliciesAreOptimal}.
\begin{corollary}
    \label{coro:aggregatedChainsAreOptimal}
    Let $\mu$ be an optimal policy with no looping states, so all chains have a finite number of non-zero actions. Then, for any state $s$, $a_1\oplus\dots\oplus a_{N(s)}$ is optimal for state $s$, where the $a_n$ form the chain of $s$ with degree $N(s)$.
\end{corollary}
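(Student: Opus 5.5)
The plan is to identify the aggregated action $a_1\oplus\dots\oplus a_{N(s)}$ with the value of a sequenced policy at $s$, and then invoke \autoref{prop:sequencedOptimalPoliciesAreOptimal}. Fix a state $s$ and let $\{a_n\}_{n=1}^\infty$ be its chain under $\mu$, with state chain $\{s_n\}$. Since $\mu$ has no looping states, the chain has the form $\{a_1,\dots,a_{N(s)},0,0,\dots\}$. The case $N(s)=0$ is trivial: the empty aggregate is the action $0=\mu(s)$, which is optimal because $\mu$ is optimal.

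For $N(s)\geq 1$, the first step is to note that the non-zero actions occupy exactly the first $N(s)$ positions. If $a_n=0$ for some $n$, then $s_{n+1}=s_n\oplus 0=s_n$, so every later action is also zero. Hence the non-zero actions are precisely $a_1,\dots,a_{N(s)}$, and $a_n=0$ for all $n>N(s)$.

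The second step uses the definition of the sequencing operators. With $k=N(s)-1$ we have
$$\text{Seq}_{N(s)-1}\{\mu\}(s)=a_1\oplus\dots\oplus a_{N(s)}.$$
For $N(s)=1$ we read $\text{Seq}_0\{\mu\}=\mu$, so that $\text{Seq}_0\{\mu\}(s)=a_1$, which is optimal directly. Associativity of $\oplus$ over actions makes the iterated sum well defined. Alternatively, one could take any $k\geq N(s)-1$: the additional terms are zero actions, and $s\oplus 0=s$ together with $a\oplus 0$ acting identically means they change nothing.

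The third step applies \autoref{prop:sequencedOptimalPoliciesAreOptimal}, which says that $\text{Seq}_{N(s)-1}\{\mu\}$ is an optimal policy. Its action at $s$ is therefore optimal for $s$, and that action is exactly $a_1\oplus\dots\oplus a_{N(s)}$.

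The main subtlety is that $N(s)$ depends on $s$, so no single $\text{Seq}_k$ reproduces $\mu''$ at every state. This is harmless here, because the corollary only asks for optimality at each state separately, and optimality is a statewise property of $Q^*$. To conclude that $\mu''$ itself is an optimal policy, one can instead take a uniform $K\geq\max_s N(s)$, which is finite because the state space is finite. Then $\text{Seq}_K\{\mu\}$ coincides with $\mu''$ provided $a\oplus 0=a$ as actions. If that identity is not available, the statewise argument still suffices: $\mu''$ selects an action attaining $\min_a Q^*(s,a)$ in every state, and so it is optimal.
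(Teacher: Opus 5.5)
Your proposal is correct and takes the same route as the paper, which simply states that this corollary is immediate from \autoref{prop:sequencedOptimalPoliciesAreOptimal}. You additionally make explicit the identification $a_1\oplus\dots\oplus a_{N(s)}=\text{Seq}_{N(s)-1}\{\mu\}(s)$ and handle the degenerate cases $N(s)\in\{0,1\}$, which the paper leaves implicit; your closing remark about a uniform $K\geq\max_s N(s)$ goes beyond this statewise claim and is only needed for the policy-level conclusion of \autoref{claim:coherentFromNonCoherent}.
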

\autoref{prop:loopBreakingIsOptimal} and \autoref{coro:aggregatedChainsAreOptimal} together provide a proof of \autoref{claim:coherentFromNonCoherent}. This confirms that we can sequence together actions from a non-coherent optimal policy to obtain a coherent optimal policy. As previously mentioned, this naturally extends to supported Pareto-optimality (i.e., mixed-objective optimality) in the multi-objective case. 


So far, we have shown that transition rates, costs, value functions, and policies behave as expected when impulsive actions are implemented in the suggested manner. However, extra care is needed when examining the long-run probability of being in a particular state. Consider the CTMC induced by an impulsive CTMDP controlled by a coherent policy $\mu$, denoted by $S^\mu(t)$. Let $\bm{\pi}^\mu$ be the stationary distribution, and suppose there exists a recurrent state $s$ such that $\mu(s)=a\neq0$. As $s$ is recurrent, we have $\pi^\mu(s)>0$. However, in order to emulate impulsive actions, it would be better to have $\pi^\mu(s) = 0$, as the policy $\mu$ chooses to exit from state $s$ immediately. We can also think of this non-zero $\pi^\mu(s)$ value as detracting from the probability of being in state $s\oplus a$, the instantaneous post-decision state. To resolve this issue, we must build upon the equivalence relations introduced in the previous subsection and investigate the probabilities with respect to the equivalence relations. In this way, the probability of being in a state with an impulsive action can be reconciled with the probability of being in its target state in a way that makes sense. These equivalence relations are also useful in the analysis of decomposable impulsive CTMDPs in \autoref{paper2:model:impulsive}. We proceed to define a unique sense of equivalence over impulsive CTMDPs.
\begin{definition}[Impulsive Equivalence]
    For any impulsive CTMDP, we say two feasible state-action pairs $(s_1,a_1),(s_2,a_2)\in\mathcal{SA}$ are \emph{impulsively equivalent} if $s_1\oplus a_1 = s_2\oplus a_2$, and write $(s_1,a_1)\sim_\oplus(s_2,a_2)$. For an impulsive CTMDP controlled by a DS policy $\mu$, we say two states $s_1$ and $s_2$ are \emph{impulsively equivalent} if $(s_1,\mu(s_1))\sim_\oplus(s_2,\mu(s_2))$, and write $s_1\sim_{\oplus\mu} s_2.$
\end{definition}
An immediate consequence of this definition is that two impulsively equivalent states are also equivalent with respect to $\sim_\mu$. Impulsive equivalence is therefore a strictly stronger condition than total equivalence. We demonstrate this using the following example.
\begin{example}
    Consider a controlled impulsive CTMDP with state space $\mathcal{S} = \{1,2,3,4\}$ and action spaces:
    $$\mathcal{A}(1) = \mathcal{A}(2) = \{0,1\},\, \mathcal{A}(3) = \mathcal{A}(4) = {0}.$$
    Let $\mu(1) = \mu(2) = 1$, $1\oplus1 = 3$ and $2\oplus 1 = 4$. Suppose all costs are 0, so all states are trivially cost equivalent. Suppose $q^+(3,0,3) = q^+(3,0,4) = q^+(4,0,3) = q^+(4,0,4) = 1$. Then all states are stochastically equivalent under $\mu$ and hence are all equivalent under $\mu$. However, state-action pairs $(1,1)$ and $(2,1)$ are not impulsively equivalent because $1\oplus1\neq2\oplus1$.
\end{example}
We also have that $\bm{c}(s_1,a_1) = \bm{c}(s_2,a_2)$ for impulsively equivalent pairs $(s_1,a_1)$ and $(s_2,a_2)$, so we can write $\bm{c}(\widehat{s_1,a_1})$ without any ambiguity. We can immediately connect this sense of equivalence to the notion of coherence.
\begin{lemma}
    \label{lemma:coherenceImpliesRelated}
    Let $\mu$ be a coherent policy. Then $s\sim_{\oplus\mu}s\oplus\mu(s)$.
\end{lemma}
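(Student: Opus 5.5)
We must show that $s\oplus\mu(s) = (s\oplus\mu(s))\oplus\mu(s\oplus\mu(s))$. By the definition of $\sim_{\oplus\mu}$, the claim $s\sim_{\oplus\mu}s\oplus\mu(s)$ means $(s,\mu(s))\sim_\oplus(s\oplus\mu(s),\mu(s\oplus\mu(s)))$, which is exactly this equality. The plan is a direct verification, split into two cases according to whether $\mu(s)$ is zero.

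\emph{Case $\mu(s)=0$.} By Assumption (2) we have $s\oplus 0 = s$, so $s\oplus\mu(s)=s$. Hence $\mu(s\oplus\mu(s))=\mu(s)=0$. Both sides of the required equality then reduce to $s$, and the two states coincide. They are trivially impulsively equivalent, since $\sim_{\oplus\mu}$ is reflexive.

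\emph{Case $\mu(s)=a\neq 0$.} Coherence of $\mu$ gives $\mu(s\oplus a)=0$. Therefore
$$(s\oplus a)\oplus\mu(s\oplus a) = (s\oplus a)\oplus 0 = s\oplus a,$$
again using $s'\oplus 0 = s'$ for the state $s'=s\oplus a$. The right-hand side is exactly $s\oplus\mu(s)$, which is the required equality.

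There is no real obstacle here. The only point needing care is a feasibility check: the pair $(s\oplus\mu(s),\mu(s\oplus\mu(s)))$ must lie in $\mathcal{SA}$ so that $\sim_\oplus$ applies to it. This holds because $\mu$ is a DS policy, so $\mu(s')\in\mathcal{A}(s')$ for every state $s'$, and $0\in\mathcal{A}(s')$ for all $s'$ by Assumption (1).
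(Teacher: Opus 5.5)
Your proof is correct and follows the paper's argument. The paper writes $(s,\mu(s))\sim_\oplus(s\oplus\mu(s),0)=(s\oplus\mu(s),\mu(s\oplus\mu(s)))$ in one line, and your explicit case split is slightly more careful: when $\mu(s)=0$, the fact $\mu(s\oplus\mu(s))=0$ comes from $s\oplus 0=s$ rather than from coherence, which the paper's one-liner leaves implicit.
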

\begin{proof}
    We see that $(s,\mu(s))\sim_\oplus (s\oplus\mu(s),0) = (s\oplus\mu(s), \mu(s\oplus\mu(s)))$, so $s\sim_{\oplus\mu}s\oplus(\mu(s))$.
\end{proof}
This intuitively tells us that any state is equivalent to its post-decision state under a coherent policy. This allows us to aggregate all states with the same post-decision state (including the post-decision state itself) into one equivalence class, and as such we can report the values $\widehat{\pi^\mu}(\widehat{s})$ for this post-decision state as if the actions were truly instantaneous. Next, we show that this kind of post-decision state in a class is unique.
\begin{lemma}
    Let $\mu$ be a coherent policy, and let $\widehat{s}$ be an equivalence class with respect to $\sim_{\oplus\mu}$. There is exactly one $s\in\widehat{s}$ such that $\mu(s) = 0$, which we call the \emph{post-decision state} of the class.
\end{lemma}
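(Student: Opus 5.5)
Fix an equivalence class $\widehat{s}$ with respect to $\sim_{\oplus\mu}$ and pick any representative $s_0\in\widehat{s}$. Every member $s\in\widehat{s}$ satisfies $s\oplus\mu(s)=s_0\oplus\mu(s_0)$. Call this common state $t:=s_0\oplus\mu(s_0)$. The proof has two parts: existence (show that $t$ lies in $\widehat{s}$ and has $\mu(t)=0$) and uniqueness (show that any member with zero action must equal $t$).

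\emph{Existence.} Consider first the case $\mu(s_0)=0$. Then $t=s_0\oplus 0=s_0$, so $s_0$ itself is a member of the class taking the zero action. Otherwise $\mu(s_0)\neq 0$, and coherence gives $\mu(t)=\mu(s_0\oplus\mu(s_0))=0$. By \autoref{lemma:coherenceImpliesRelated}, $s_0\sim_{\oplus\mu} t$, so $t\in\widehat{s}$. In either case $t\in\widehat{s}$ and $\mu(t)=0$.

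\emph{Uniqueness.} Suppose $s_1,s_2\in\widehat{s}$ with $\mu(s_1)=\mu(s_2)=0$. By the definition of $\sim_{\oplus\mu}$,
$$s_1 = s_1\oplus 0 = s_1\oplus\mu(s_1) = s_2\oplus\mu(s_2) = s_2\oplus 0 = s_2,$$
using $s\oplus 0=s$. So at most one member of the class takes the zero action. The same calculation shows that any such member equals $t$.

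Neither part presents a real obstacle. The main thing to check is that coherence is what guarantees the common post-decision state $t$ itself takes action $0$. Without coherence, $t$ could take a non-zero action, and the class might then contain no state with $\mu=0$. Uniqueness, by contrast, needs only the identity $s\oplus 0=s$.
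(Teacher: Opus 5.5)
Your proof is correct and follows essentially the same route as the paper: existence by a case split on whether $\mu(s_0)=0$, using coherence together with the lemma that $s\sim_{\oplus\mu}s\oplus\mu(s)$, and uniqueness via $s_1=s_1\oplus 0=s_2\oplus 0=s_2$. Your explicit identification of the post-decision state as the common value $t=s_0\oplus\mu(s_0)$ is a small addition beyond the paper's argument.
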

\begin{proof}
We first prove existence of such a state. The class $\widehat{s}$ is a non-empty set and therefore contains some state $s'$. It follows that either $\mu(s')=0$, so such a state exists, or $\mu(s')\neq 0$ and therefore the class also contains $s'\oplus\mu(s')$ by \autoref{lemma:coherenceImpliesRelated}, which by definition of coherence satisfies $\mu(s'\oplus\mu(s')) = 0$. To show uniqueness, suppose there exist two states $s_1,s_2\in\widehat{s}$ such that $\mu(s_1)=\mu(s_2)=0$. Then since $s_1\sim_{\oplus\mu}s_2$ we have $(s_1,\mu(s_1))\sim_\oplus (s_2,\mu(s_2))$ and hence $(s_1,0)\sim_\oplus(s_2,0)$. Therefore $s_1\oplus0=s_2\oplus0$, implying that $s_1=s_2$. 
\end{proof}
We can interpret the value $\widehat{\pi^\mu}(\widehat{s})$ as the probability of being in the one state in $\widehat{s}$ in which we take the gradual action, rather than viewing this probability as being `distributed' between the states which emulate this state impulsively. To be more precise, by \autoref{coro:classesAreChainsCont}, we know that if $S^\mu$ is the CTMC induced by controlling an impulsive CTMDP, then the class-aggregated process $\widehat{S^\mu}$ is also a CTMC, and $\widehat{\pi^\mu}(\widehat{s})$ is the stationary distribution of that CTMC. We can then relabel the $\widehat{s}$ to their post-decision states to obtain a CTMC that always transitions between post-decision states, such that no time is ever spent in states where a non-zero action is taken. As such, the desired behaviour of instant transitions from impulsivity is satisfied up to an equivalence relation when the impulsive CTMDP is controlled by a coherent policy, as desired. This concludes our general results for impulsive CTMDPs, and \autoref{paper2:model:decomp} now follows with the introduction and analysis of decomposable impulsive CTMDPs.

\subsection{Decomposable Impulsive CTMDPs}
\label{paper2:model:decomp}
\subsubsection{Model}
\label{paper2:model:decomp:model}
We build towards the definition of a decomposable impulsive CTMDP in two stages. The first stage deals with the stochastic dynamics of the system whilst ignoring the costs, and the second stage deals with the structure of the cost function. In order to define a problem that can be broken into smaller problems, we take the approach of defining the smaller problems first and then combining these together. This motivates our first definition.
\begin{definition}[Cost-Free Direct Product of Impulsive CTMDPs]
    Let $\{\mathfrak{C}_i\}_{i=1}^I$ be a finite indexed set of impulsive CTMDPs  $\mathfrak{C}_i = \langle\mathcal{S}_i,\left(\mathcal{A}_i(s_i)\right)_{s_i\in\mathcal{S}_i},q^+_i:\mathcal{S}_i\times\mathcal{S}_i\to\RR_{\geq0}, \oplus_i:\mathcal{SA}_i\to\mathcal{S}_i\rangle.$ We define the \emph{cost-free direct product} of these impulsive CTMDPs as the costless impulsive CTMDP:
    $$\bigotimes_{i=1}^I\mathfrak{C}_i := \langle\mathcal{S},\left(\mathcal{A}(s)\right)_{s\in\mathcal{S}}, q^+:\mathcal{S}\times\mathcal{S}\to\RR_{\geq0}, \oplus:\mathcal{SA}\to\mathcal{S}\rangle,$$
    satisfying:
    \begin{align*}
        \mathcal{S} &= \bigtimes_{i=1}^I\mathcal{S}_i\\
        \mathcal{A}(\bm{s}) &= \bigtimes_{i=1}^I \mathcal{A}_i([\bm{s}]_i)\\
        q^+(\bm{s},\bm{s}') &= \begin{cases}
             q_i^+([\bm{s}]_i, s'_i), & \bm{s}' = ([\bm{s}]_1,\dots,s_i',\dots,[\bm{s}]_I) \text{ for some }i,  \\
             0, & \text{otherwise,} 
        \end{cases}.\\
        \bm{s} \oplus \bm{a} &= ([\bm{s}]_1\oplus_1 [\bm{a}]_1,\dots,[\bm{s}]_I\oplus_I[\bm{a}]_I).
    \end{align*}
    Additionally, we refer to $\mathfrak{C}_i$ as the \emph{$i$th compartment} of the product CTMDP. 
\end{definition}
By construction, this CTMDP is also impulsive, and the state space, action spaces and impulsive operator take the expected forms. The function $q^+$ requires more explanation: the structure of $q^+$ ensures that only one of the compartments can update upon a transition, whilst the others remain the same. This is because the rate of such a transition should depend only on the state (or ``sub-state'') of that compartment, and not on the rest of the composite state. 

\begin{remark}
{\color{black} A subtlety of this definition is that compartments are not entirely stochastically independent, even if ``controlled separately'' (which we will define shortly). Consider a two-compartment product impulsive CTMDP in state $\bm{s}=(s_1,s_2)$ under non-zero action $\bm{a}=(a_1,a_2)$. By definition, we have that $q^+(\bm{s},\bm{a},\bm{s}')=q^+(\bm{s}\oplus\bm{a},\bm{s}')$, so all possible next states are of the form $(s_1\oplus a_1,s_2')$ or $(s_1',s_2\oplus a_2)$. This means that an ``event'' in one compartment triggers all other compartments to simultaneously transition to ``remember'' the action that was previously being applied. Despite this apparent implicit dependence between compartment processes, we will later prove, under suitable conditions, that the controlled product processes still decompose into separate independent processes.} 
\end{remark}

Now, using this direct product, we can provide a further definition.
\begin{definition}[Stochastic Decomposability] An impulsive CTMDP $$\mathfrak{C} = \langle\mathcal{S},\left(\mathcal{A}(s)\right)_{s\in\mathcal{S}}, q^+:\mathcal{S}\times\mathcal{S}\to\RR_{\geq0}, \bm{c}:\mathcal{SA}\to\RR^k, \oplus:\mathcal{SA}\to\mathcal{S}\rangle$$is called \emph{stochastically decomposable with $I$ compartments} if the tuple $\langle\mathcal{S},\left(\mathcal{A}(s)\right)_{s\in\mathcal{S}}, q^+, \oplus\rangle$ can be expressed as a cost-free direct product of impulsive CTMDPs $\{\mathfrak{C}_i\}_{i=1}^I$. A choice of such sub-CTMDPs then gives the \emph{compartments} of the stochastically decomposable impulsive CTMDP.
\end{definition}
Trivially, any impulsive CTMDP is stochastically decomposable with one compartment. Also, the choice of compartmentalization is not unique. For example, the cost-free direct product of three compartments could be decomposed back into just two compartments, with two of the three original compartments remaining joined together. Before we turn our attention to the structure of the cost functions for a decomposable impulsive CTMDP, we must first consider a certain type of policy to be used for controlling such a process. Heuristically, we might think that it would make sense to control the compartments of the CTMDP separately, only making decisions for a compartment based on the sub-state and not the whole state. We argue that taking this approach could allow us to optimise the compartment CTMDPs separately, resulting in subproblems that are far easier to solve. We formalise this idea by introducing \emph{separable policies}.
\begin{definition}[Separable Policy]
Let $\mu$ be a DS policy for a stochastically decomposable impulsive CTMDP with compartments $\{\mathfrak{C}_i\}_{i=1}^I$. We say that $\mu$ is a \emph{separable policy} if there exist functions $\mu_i$ such that, for all $\bm{s}\in\mathcal{S}$, we can write $\mu(\bm{s}) = (\mu_1([\bm{s}]_1), \mu_2([\bm{s}]_2),\dots, \mu_I([\bm{s}]_I))$. That is, the action taken on compartment $i$ depends only on the state of compartment $i$.
\end{definition}
\begin{remark}
The flexibility of separable policies relies on the \emph{choice} of decomposition. As such, we advise caution when considering the choice of decomposition when limiting to such policies. It is not always necessarily best to decompose a problem as much as possible, as heuristics based on the use of separable policies may sometimes perform better when certain compartments are left non-decomposed. 
\end{remark}
Next, we discuss decomposability of the cost function. We do not wish to restrict attention to cost functions that are simply linear combinations of cost functions from the compartment CTMDPs. The following definition allows for a broader range of cost functions.
\begin{definition}[Decomposable Cost Function]
     Let $\mathfrak{C}$ be a stochastically decomposable impulsive CTMDP with $I$ compartments and $K$ objectives. Let $\bm{g}:[0,1]^{|\mathcal{SA}|}\to\RR^K, \bm{g}(\pi) = \sum_{s\in\mathcal{S},a\in\mathcal{A}(s)}\bm{c}(s\oplus a)\pi(s,a)$ be the function mapping every solution of OP to its long-run average cost. The cost function $c_k:\mathcal{S}\to\RR$ is a \emph{decomposable cost function} if there exists a {\color{black}component-wise strictly} increasing function $f_k:\RR^I\to\RR$ and functions $c_{i,k}:\mathcal{S}_i\to\RR$ for $i\in\{1,\dots,I\}$, which we call the \emph{$k$th sub-state costs}, such that for every $\bm{\pi}$ corresponding to a coherent separable policy $\mu$, we have:
    $$g_k(\pi) = f_k\left(\sum_{(s_1,a_1)\in\mathcal{SA}_1}c_{1,k}(s_1\oplus a_1)\pi_1(s_1,a_1),\dots,\sum_{(s_I,a_I)\in\mathcal{SA}_I}c_{I,k}(s_I\oplus a_I)\pi_I(s_I,a_I)\right),$$
    where $\pi_i(s_i,a_i) := \sum_{\bm{s},\bm{a}:[\bm{s},\bm{a}]_i = (s_i,a_i)}\pi(\bm{s},\bm{a})$ is the probability that compartment $i$ is in sub-state $s_i$ and undergoing sub-action $a_i$.
\end{definition}
\begin{remark}
    We may write the right-hand-side of the above expression in shorthand form as:
    $$f_k\left(\left(\sum_{(s_i,a_i)\in\mathcal{SA}_i}c_{i,k}(s_i\oplus a_i)\pi_i(s_i,a_i)\right)_{i=1}^I\right).$$
\end{remark}

The above definition is very general and quite verbose, so we discuss a few simple cases for the structure of a cost function and propose that each case implies decomposability.
\begin{definition}[Types of Cost Function]
     Let $\mathfrak{C}$ be a stochastically decomposable impulsive CTMDP with $I$ compartments and $K$ objectives. Then $c_k:\mathcal{S}\to\RR$ is:
     \begin{itemize}
         \item an \emph{additive cost function} if there exist functions $c_{i,k}:\mathcal{S}_i\to\RR$ such that $c_k(\bm{s}) = \sum_{i=1}^I c_{i,k}([\bm{s}]_i).$
        \item a \emph{multiplicative cost function} if there exist functions $c_{i,k}:\mathcal{S}_i\to\RR$ such that $c_k(\bm{s}) = \prod_{i=1}^I c_{i,k}([\bm{s}]_i).$
        \item  a \emph{logical-OR cost function} if there exist subsets $E_i\subset\mathcal{S}_i$ for each $i$ such that $c_k(\bm{s}) = \II\lcurl\bigvee_{i=1}^I [\bm{s}]_i\in E_i\rcurl.$
     \end{itemize}
     
\end{definition}
The additive cost function captures the simple idea of adding the costs of individual compartments, which often makes sense when the cost function represents monetary costs. However cost functions can sometimes have alternative interpretations. The logical-OR cost function is particularly useful if there is a set of states in each compartment that should be avoided, and therefore we aim to avoid any state of the whole system that contains such sub-states of compartments.
\begin{proposition}
\label{prop:addMultOR_are_decompable}
    Additive cost functions, multiplicative cost functions, and logical-OR cost functions are all decomposable cost functions.
\end{proposition}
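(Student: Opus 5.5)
The plan is to treat the three cost types separately, since only the additive case is purely algebraic. The other two need a factorisation of the stationary distribution under coherent separable policies.

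\emph{Additive case.} Take $f_k(x_1,\dots,x_I)=\sum_i x_i$, which is strictly increasing in each component. Since $[\bm{s}\oplus\bm{a}]_i=[\bm{s}]_i\oplus_i[\bm{a}]_i$, I write
$$g_k(\pi)=\sum_{(\bm{s},\bm{a})}\sum_{i=1}^I c_{i,k}([\bm{s}]_i\oplus_i[\bm{a}]_i)\,\pi(\bm{s},\bm{a}).$$
I then swap the sums and group the terms by $([\bm{s}]_i,[\bm{a}]_i)=(s_i,a_i)$. By the definition of the marginals $\pi_i$, this gives exactly $\sum_i\sum_{(s_i,a_i)}c_{i,k}(s_i\oplus a_i)\pi_i(s_i,a_i)$. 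This holds for every feasible $\pi$, so no use of coherence or separability is needed.

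\emph{Key step: product form of the lumped chain.} For the other two cases I need the following fact. For $\pi$ coming from a coherent separable $\mu=(\mu_1,\dots,\mu_I)$, the distribution of the post-decision state $\bm{s}\oplus\mu(\bm{s})$ is a product of compartment distributions. I would prove it as follows.
\begin{enumerate}
\item Each $\mu_i$ is itself a coherent policy for $\mathfrak{C}_i$. Indeed, $\mu(\bm{s})\oplus$ acts componentwise, and coherence of $\mu$ applied to states differing only in compartment $i$ forces $\mu_i(s_i\oplus\mu_i(s_i))=0$.
\item By \autoref{lemma:coherenceImpliesRelated} and the uniqueness lemma for post-decision states, each $\sim_{\oplus\mu}$ class of $\mathcal{S}$ is labelled by a unique post-decision state $\bm{t}$ with $\mu(\bm{t})=0$. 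Equivalently, $\bm{t}=(t_1,\dots,t_I)$ with each $t_i$ a post-decision state of $\mathfrak{C}_i$ under $\mu_i$. The same holds compartmentwise.
\item By \autoref{coro:classesAreChainsCont}, the lumped process is a CTMC on these labels. From $\bm{t}$, the positive rates of the product definition move only coordinate $i$, from $t_i$ to $t_i'$, at rate $q_i^+(t_i,t_i')$. The target's class label is then $\bm{t}$ with coordinate $i$ replaced by $t_i'\oplus_i\mu_i(t_i')$, by separability.
\item Hence the lumped generator is the Kronecker sum of the lumped compartment generators $\widehat{Q}_i$.
\item The product $\prod_i\widehat{\pi}_i(t_i)$ of the compartment stationary distributions satisfies the global balance equations, because each term of the Kronecker sum annihilates it. Under the unichain assumption it is therefore the stationary distribution.
\end{enumerate}
Combining this with \autoref{prop:aggregatedCTMChasExpectedDistro} and \autoref{lemma:s-aHat}, $g_k(\pi)=\sum_{\bm{t}}c_k(\bm{t})\prod_i\widehat{\pi}_i(t_i)$. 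The marginal $\pi_i$ lumps to $\widehat{\pi}_i$, so $\sum_{(s_i,a_i)}h(s_i\oplus a_i)\pi_i(s_i,a_i)=\sum_{t_i}h(t_i)\widehat{\pi}_i(t_i)$ for any function $h$ on $\mathcal{S}_i$.

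\emph{Multiplicative and logical-OR cases.} For multiplicative costs, the product form gives $g_k=\prod_i x_i$ with $x_i=\sum c_{i,k}(s_i\oplus a_i)\pi_i(s_i,a_i)$, so I take $f_k=\prod$. For logical-OR costs, I set $c_{i,k}=\II\{\cdot\in E_i\}$ and use complements: $g_k=1-\prod_i(1-x_i)$, so I take $f_k(x)=1-\prod_i(1-x_i)$.

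There is a monotonicity caveat. The product is strictly increasing in each component only when the sub-state costs are positive. I would handle this by restricting to positive $c_{i,k}$, or by shifting to $c_{i,k}>0$ and noting the product still represents $c_k$. Similarly, $1-\prod_i(1-x_i)$ is strictly increasing on the relevant domain $[0,1)^I$. I would state both as a mild side condition.

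\emph{Expected obstacle.} The main difficulty is the key step: checking carefully that the lumped generator is exactly a Kronecker sum. This means handling self-transitions, and the simultaneous ``remembering'' jumps of other compartments noted in the earlier remark. Those jumps are absorbed because the other coordinates are already post-decision states, so $t_j\oplus_j\mu_j(t_j)=t_j$ for $j\neq i$. The other delicate point is the uniqueness of the stationary distribution, needed to identify the product form; if uniqueness is in doubt, I would fall back on the unichain assumption.
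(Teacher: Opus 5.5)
Your proposal is correct under your side condition, and it follows the paper's proof: additive costs by interchanging sums, and multiplicative and logical-OR costs by factoring the $\sim_{\oplus\mu}$-aggregated stationary distribution into a product of lumped compartment distributions and rewriting a sum of products as a product of sums, using $c_k=1-\prod_i\II\{[\bm{s}]_i\notin E_i\}$ for logical-OR; the one difference is that the paper takes the factorisation from \autoref{coro:stationaryIndep}, whereas you re-derive it from the Kronecker-sum form of the lumped generator, the generator-level version of the argument behind \autoref{prop:decompositionWorks}. Your monotonicity caveat is a real point the paper's proof skips ($1-\prod_i(1-x_i)$ is constant in $x_i$ once some other $x_j=1$, which the never-repair policy in the application attains), but only your first fix (positive sub-state costs) works, since shifting the $c_{i,k}$ by constants changes their product.
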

\begin{proof}
    See \autoref{app:addMultLogicalORAreDecomp}
\end{proof}
Note that these cost functions are not exhaustive, and there may be other useful types of decomposable cost functions. Next, we define the decomposable impulsive CTMDP.
\begin{definition}[Decomposable Impulsive CTMDP]
    An impulsive CTMDP $\mathfrak{C}$ is called a \emph{decomposable impulsive CTMDP} if it is stochastically decomposable and each of its scalar cost functions $c_k$ is a decomposable cost function.
\end{definition}
This concludes the construction of the modelling framework. Those wishing to model a problem as a decomposable impulsive CTMDP may choose to construct their model directly and then prove that it is decomposable. Alternatively, they may choose to construct the compartment CTMDPs first and then use the direct product to construct the full model, and define cost functions over this post-hoc. The remainder of this subsection is dedicated to the analysis of stochastically decomposable impulsive CTMDPs.

\subsubsection{Theory}
\label{paper2:model:decomp:theory}
The following lemma links the concepts of separable policies and coherence.
\begin{lemma}
A separable policy $\mu$ is coherent if and only if each subpolicy $\mu_i$ is coherent. \label{lemma:coherentIffSubCoherent}
\end{lemma}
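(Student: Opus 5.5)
The plan is to prove the two directions separately, working componentwise. Three facts about the cost-free direct product do all the work. First, $\bm{s}\oplus\bm{a}$ is computed coordinatewise, so $[\bm{s}\oplus\mu(\bm{s})]_i = [\bm{s}]_i\oplus_i\mu_i([\bm{s}]_i)$. Second, the zero action of the product is the tuple $(0,\dots,0)$; this is the unique gradual action, since $\bm{s}\oplus\bm{0}=\bm{s}$ and the action spaces are Cartesian products. Third, every sub-state configuration is realised by some product state, because $\mathcal{S}=\bigtimes_i\mathcal{S}_i$.

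\emph{($\Leftarrow$)} Suppose every $\mu_i$ is coherent, and take $\bm{s}$ with $\mu(\bm{s})=\bm{a}\neq\bm{0}$. We must show $\mu(\bm{s}\oplus\bm{a})=\bm{0}$, which amounts to checking each coordinate $i$. By separability and the coordinatewise form of $\oplus$,
$$[\mu(\bm{s}\oplus\bm{a})]_i=\mu_i([\bm{s}]_i\oplus_i\mu_i([\bm{s}]_i)).$$
There are two cases. If $\mu_i([\bm{s}]_i)=0$, then $[\bm{s}]_i\oplus_i 0=[\bm{s}]_i$, and the expression equals $\mu_i([\bm{s}]_i)=0$. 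If $\mu_i([\bm{s}]_i)\neq 0$, then coherence of $\mu_i$ gives $0$ directly. Hence $\mu(\bm{s}\oplus\bm{a})=\bm{0}$, so $\mu$ is coherent.

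\emph{($\Rightarrow$)} Suppose $\mu$ is coherent, fix $i$, and take $s_i\in\mathcal{S}_i$ with $\mu_i(s_i)=a_i\neq0$. Choose any $\bm{s}$ with $[\bm{s}]_i=s_i$; such an $\bm{s}$ exists because the product state space is a full Cartesian product. Then $\mu(\bm{s})\neq\bm{0}$, since its $i$th coordinate is nonzero. Coherence of $\mu$ gives $\mu(\bm{s}\oplus\mu(\bm{s}))=\bm{0}$. Reading off the $i$th coordinate via separability yields $\mu_i(s_i\oplus_i a_i)=0$, so $\mu_i$ is coherent.

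The only step needing care is identifying the product's zero action with the all-zeros tuple, that is, ensuring that ``$\mu(\bm{s})\neq 0$'' means some coordinate is nonzero. This follows from Assumption (1) applied to the product together with $\bm{s}\oplus\bm{0}=\bm{s}$. Note one small subtlety: coherence of $\mu$ only constrains states where the joint action is nonzero. The ($\Rightarrow$) direction handles this by selecting a product state whose $i$th coordinate carries the nonzero sub-action, which forces the joint action to be nonzero. Beyond that, the argument is purely notational unpacking of the definitions.
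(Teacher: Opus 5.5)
Your proposal is correct and follows essentially the same route as the paper: both unpack coherence coordinatewise, using separability and the coordinatewise definition of $\oplus$ in the direct product. The paper compresses this into a single chain of equivalences by rewriting coherence as $\mu(\bm{s}\oplus\mu(\bm{s}))=\bm{0}$ for all $\bm{s}$, which absorbs your zero-action case split since $s\oplus 0=s$. Its last step silently uses that every $v\in\mathcal{S}_i$ arises as $[\bm{s}]_i$ for some $\bm{s}$, which you make explicit.
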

\begin{proof}
    Let $\mu$ be coherent and separable, so there exist policies $\mu_i:\mathcal{S}_i\to\mathcal{A}_i$ such that $\mu(\bm{s}) = (\mu_1([\bm{s}]_1),\dots,\mu_I([\bm{s}]_I))$ for all $i$. We also have that $\mu(\bm{s}\oplus\mu(\bm{s})) = 0$ for all $s$. Now:
    \begin{align*}
        \mu(\bm{s}\oplus\mu(\bm{s})) = \bm{0} \text{ for all }\bm{s}&\iff [\mu(\bm{s}\oplus\mu(\bm{s}))]_i = 0\text{ for all }\bm{s},i\\
        &\iff 
        \mu_i([\bm{s}\oplus\mu(\bm{s})]_i) = 0 \text{ for all }\bm{s},i\\
        &\iff \mu_i([\bm{s}]_i\oplus \mu_i([\bm{s}]_i)) = 0 \text{ for all }\bm{s},i\\
        &\iff \mu_i(v\oplus\mu_i(v)) = 0\text{ for all }i, v\in\mathcal{S}_i.
    \end{align*}
    So the bi-implication holds.
\end{proof}
Note that we now start to use $v$ for generic compartment states and $b$ for generic compartment actions, and we abbreviate policies that are coherent and separable to \emph{CS} policies. Now, for a choice of compartments $\{\mathfrak{C}_i\}_{i=1}^I$ for a stochastically decomposable impulsive CTMDP, we can form the impulsive equivalence relations over the state-action pairs of the compartment CTMDPs $\mathfrak{C}_i$, and we denote this relation by $\sim_{\oplus_i}$. This induces a partition over the compartment feasible state-action spaces $\mathcal{SA}_i$ which we denote $\widehat{\mathcal{SA}_i}$. Similarly, for any separable policy $\mu$, we can form impulsive equivalence relations over the states of the compartment CTMDPs using the sub-policies $\mu_i$, and denote these by $\sim_{\oplus\mu_i}$. In the following lemma and its corollary, we connect these two relations.
\begin{lemma}\label{lemma:state-action-equiv}
    Let $(\bm{s}_1,\bm{a}_1),(\bm{s}_2,\bm{a}_2)\in\mathcal{S}\mathcal{A}$ be two state-action pairs of a decomposable impulsive CTMDP. Then $([\bm{s}_1]_i,[\bm{a}_1]_i) \sim_{\oplus_i} ([\bm{s}_2]_i,[\bm{a}_2]_i)$ for all $i$ if and only if $(\bm{s}_1,\bm{a}_1) \sim_{\oplus} (\bm{s}_2,\bm{a}_2)$.
\end{lemma}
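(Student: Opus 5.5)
The plan is to unfold the definitions and reduce everything to componentwise equality in the product state space $\mathcal{S}=\bigtimes_{i=1}^I\mathcal{S}_i$. By the definition of impulsive equivalence, $(\bm{s}_1,\bm{a}_1)\sim_\oplus(\bm{s}_2,\bm{a}_2)$ means exactly $\bm{s}_1\oplus\bm{a}_1=\bm{s}_2\oplus\bm{a}_2$. Likewise, $([\bm{s}_1]_i,[\bm{a}_1]_i)\sim_{\oplus_i}([\bm{s}_2]_i,[\bm{a}_2]_i)$ means $[\bm{s}_1]_i\oplus_i[\bm{a}_1]_i=[\bm{s}_2]_i\oplus_i[\bm{a}_2]_i$ in $\mathcal{S}_i$.

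First I will check that the compartment pairs are feasible, so that the relation $\sim_{\oplus_i}$ is defined on them. For $(\bm{s},\bm{a})\in\mathcal{SA}$ we have $\bm{a}\in\mathcal{A}(\bm{s})=\bigtimes_i\mathcal{A}_i([\bm{s}]_i)$, hence $([\bm{s}]_i,[\bm{a}]_i)\in\mathcal{SA}_i$.

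Next I will use the definition of the impulsive operator in the cost-free direct product, $\bm{s}\oplus\bm{a}=([\bm{s}]_1\oplus_1[\bm{a}]_1,\dots,[\bm{s}]_I\oplus_I[\bm{a}]_I)$. This gives $[\bm{s}\oplus\bm{a}]_i=[\bm{s}]_i\oplus_i[\bm{a}]_i$ for every $i$. The decomposable impulsive CTMDP is stochastically decomposable, so its $\oplus$ is by definition the product operator for the chosen compartments, and this identity applies.

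Finally, two elements of a Cartesian product are equal if and only if all their coordinates are equal. Therefore
\[
\bm{s}_1\oplus\bm{a}_1=\bm{s}_2\oplus\bm{a}_2 \iff [\bm{s}_1]_i\oplus_i[\bm{a}_1]_i=[\bm{s}_2]_i\oplus_i[\bm{a}_2]_i \text{ for all } i,
\]
which is the claimed bi-implication.

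There is no real obstacle. The only point needing care is the identification above: the $\oplus$ of the given decomposable CTMDP coincides with the product of the $\oplus_i$, which holds because the fixed compartments $\{\mathfrak{C}_i\}$ realise the decomposition.
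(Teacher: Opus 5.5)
Your proof is correct and follows the paper's argument: you unfold impulsive equivalence on both sides, use the componentwise definition of $\oplus$ in the cost-free direct product, and note that equality in $\bigtimes_{i}\mathcal{S}_i$ means equality in every coordinate. Your check that $([\bm{s}]_i,[\bm{a}]_i)\in\mathcal{SA}_i$, and your remark that stochastic decomposability identifies $\oplus$ with the product operator, are small details the paper leaves implicit.
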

\begin{proof}
    We have $([\bm{s}_1]_i,[\bm{a}_1]_i) \sim_{\oplus_i} ([\bm{s}_2]_i,[\bm{a}_2]_i)$ for all $i$ if and only if $[\bm{s}_1]_i\oplus [\bm{a}_1]_i = [\bm{s}_2]_i\oplus [\bm{a}_2]_i$ for all $i$, which holds if and only if $\bm{s}_1\oplus \bm{a}_1 = \bm{s}_2\oplus \bm{a}_2$, which is identical to $(s_1,a_1)\sim_{\oplus}(s_2,a_2)$ by definition.
\end{proof}
\begin{corollary}\label{coro:state-equiv}
    For a separable policy $\mu$, let $\bm{s}_1,\bm{s}_2$ be two states of a decomposable impulsive CTMDP. Then $[\bm{s}_1]_i \sim_{\oplus\mu_i} [\bm{s}_2]_i$ for all $i$ if and only if $\bm{s}_1 \sim_{\oplus\mu} \bm{s}_2$.
\end{corollary}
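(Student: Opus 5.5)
The plan is to reduce the statement to \autoref{lemma:state-action-equiv} by unfolding the two definitions of state-level impulsive equivalence. By definition, $\bm{s}_1\sim_{\oplus\mu}\bm{s}_2$ holds if and only if $(\bm{s}_1,\mu(\bm{s}_1))\sim_\oplus(\bm{s}_2,\mu(\bm{s}_2))$. Similarly, for each compartment, $[\bm{s}_1]_i\sim_{\oplus\mu_i}[\bm{s}_2]_i$ holds if and only if $([\bm{s}_1]_i,\mu_i([\bm{s}_1]_i))\sim_{\oplus_i}([\bm{s}_2]_i,\mu_i([\bm{s}_2]_i))$. The first step is to check that the pairs $(\bm{s}_j,\mu(\bm{s}_j))$ are feasible state-action pairs in $\mathcal{SA}$, which holds since $\mu$ is a policy. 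This licenses applying \autoref{lemma:state-action-equiv} with $\bm{a}_j:=\mu(\bm{s}_j)$ for $j=1,2$.

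The second step uses separability. Since $\mu$ is separable, $[\mu(\bm{s}_j)]_i=\mu_i([\bm{s}_j]_i)$ for every $i$ and $j=1,2$. Therefore the compartment pairs $([\bm{s}_j]_i,[\bm{a}_j]_i)$ appearing in \autoref{lemma:state-action-equiv} are exactly $([\bm{s}_j]_i,\mu_i([\bm{s}_j]_i))$. The condition ``$([\bm{s}_1]_i,[\bm{a}_1]_i)\sim_{\oplus_i}([\bm{s}_2]_i,[\bm{a}_2]_i)$ for all $i$'' is then, by definition, the same as ``$[\bm{s}_1]_i\sim_{\oplus\mu_i}[\bm{s}_2]_i$ for all $i$''.

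Chaining these equivalences gives the result:
\begin{align*}
[\bm{s}_1]_i\sim_{\oplus\mu_i}[\bm{s}_2]_i\ \forall i
&\iff ([\bm{s}_1]_i,\mu_i([\bm{s}_1]_i))\sim_{\oplus_i}([\bm{s}_2]_i,\mu_i([\bm{s}_2]_i))\ \forall i\\
&\iff (\bm{s}_1,\mu(\bm{s}_1))\sim_\oplus(\bm{s}_2,\mu(\bm{s}_2))\\
&\iff \bm{s}_1\sim_{\oplus\mu}\bm{s}_2.
\end{align*}
There is no substantive obstacle here. The only point needing care is the identification $[\mu(\bm{s})]_i=\mu_i([\bm{s}]_i)$, which is exactly where separability of $\mu$ is used. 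Without separability the sub-relations $\sim_{\oplus\mu_i}$ would not even be defined, so this is the step I would make explicit.
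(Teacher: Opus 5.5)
Your proof is correct and is exactly the paper's argument. The paper simply states that the result is immediate from the definitions and \autoref{lemma:state-action-equiv}; you have spelled out that step by instantiating $\bm{a}_j=\mu(\bm{s}_j)$ and using the separability identity $[\mu(\bm{s})]_i=\mu_i([\bm{s}]_i)$, which makes the compartment pairs match.
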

\begin{proof}
    This is immediate from the definitions and \autoref{lemma:state-action-equiv}.
\end{proof}

Next, we show how a controlled decomposable impulsive CTMDP can be decomposed by making use of the equivalence relations developed. We first prove this for the uniformised discrete-time case, and then extend to the continuous-time case.
\begin{proposition}
\label{prop:decompositionWorks}
    Let $\lcurl S_k^\mu\rcurl_{k=1}^\infty$ be the discrete-time Markov chain obtained by uniformising (with parameter $\Delta)$ the CTMC induced by controlling a stochastically decomposable impulsive CTMDP with a coherent separable policy $\mu$. Let $S^\mu_{i,k} = [S^\mu_k]_i$ be the random variable determined by the $i$th component, i.e., the state of compartment $i$, for any $i$ at time $k$. Let $\widehat{S_{i,k}^\mu}:=\varphi_i\left(S_{i,k}^\mu\right)$ for all $i$, where $\varphi_i$ is the canonical surjection of $\sim_{\oplus\mu_i}$. The processes $\lcurl \widehat{S^\mu_{i,k}} \rcurl_{k=1}^\infty$ for all $i$ are independent Markov chains.   
\end{proposition}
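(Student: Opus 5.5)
The plan is to reduce the statement to the lumpability results of \autoref{paper2:model:ctmdp:theory}, applied to the relation $\sim_{\oplus\mu}$, and then to factor the resulting aggregated chain compartment by compartment. First, since $\mu$ is coherent and separable, \autoref{lemma:coherentIffSubCoherent} makes every $\mu_i$ coherent. \autoref{coro:state-equiv} identifies $\sim_{\oplus\mu}$ with the componentwise relation ``$[\bm{s}_1]_i\sim_{\oplus\mu_i}[\bm{s}_2]_i$ for all $i$''. This gives a canonical bijection $\widehat{\mathcal{S}}\cong\bigtimes_i\widehat{\mathcal{S}_i}$, under which $\varphi_{\sim_{\oplus\mu}}(S_k^\mu)=(\widehat{S^\mu_{1,k}},\dots,\widehat{S^\mu_{I,k}})$. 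Impulsive equivalence is as strong as total, and hence stochastic, equivalence. Therefore \autoref{prop:classesAreChains} applies, and the vector of class processes is a Markov chain on $\bigtimes_i\widehat{\mathcal{S}_i}$ with kernel $p_\Delta$ built from the aggregated rates $q_\mu(\widehat{\bm{s}},\widehat{\bm{s}}')$.

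Second, I would compute these aggregated rates explicitly using the product structure. Fix a class $\widehat{\bm{s}}$ and let $\bm{v}$ be its post-decision state; by the lemma on uniqueness of post-decision states, $\bm{v}$ has components $v_i$, where each $v_i$ is the post-decision state of $\widehat{s_i}$. By \eqref{impulsiveRates} we have $q^+(\bm{s},\mu(\bm{s}),\cdot)=q^+(\bm{v},\cdot)$. By the definition of the cost-free direct product, this is supported on states of the form $(v_1,\dots,s_i',\dots,v_I)$, reached at rate $q_i^+(v_i,s_i')$. Since $\mu_j(v_j)=0$, every untouched coordinate $v_j$ stays in its own class $\widehat{s_j}$ (this is exactly the ``remembering'' subtlety in the earlier Remark, which disappears at the class level). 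Summing over $\widehat{\bm{s}}'$ then gives
\begin{equation}
q^+_\mu(\widehat{\bm{s}},\widehat{\bm{s}}')=\sum_{i=1}^I \II\{\widehat{s_j}'=\widehat{s_j}\ \forall j\neq i\}\,q^+_{\mu_i}(\widehat{s_i},\widehat{s_i}'),
\end{equation}
where $q^+_{\mu_i}$ is the class-aggregated rate of compartment $i$ from \autoref{coro:classesAreChainsCont}. So the aggregated generator is the Kronecker sum of the compartment generators.

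Third, I would deduce independence, and I expect this to be the main obstacle. A Kronecker-sum generator is exactly the generator of independent CTMCs run in parallel. Hence in continuous time the class processes $\widehat{S^\mu_i}(t)$ are independent, and each is Markov with rates $q_{\mu_i}$. For the uniformised statement, I would realise $S^\mu$ through independent Poisson clocks, one family per compartment with rate $\Delta_i$ satisfying $\sum_i\Delta_i=\Delta$, and then thin them. The discrete-time marginals $\widehat{S^\mu_{i,k}}$ are then Markov with kernel $1+q_{\mu_i}\Delta$ on the diagonal, since the other compartments' events act as self-loops for compartment $i$. The delicate point is that in the single-jump uniformised chain at most one coordinate moves per step. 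Strict independence across a common discrete index therefore fails in general, and would need either a product-form uniformisation, with kernel $\prod_i(I+q_{\mu_i}\Delta_i)$ obtained by uniformising each compartment separately, or the statement to be interpreted through the continuous-time realisation. I would first try to make precise the sense intended, namely that each compartment is uniformised separately. I would then verify that the joint kernel factorises as $\prod_i p_{\Delta,i}(\widehat{s_i},\widehat{s_i}')$, which yields independence directly.
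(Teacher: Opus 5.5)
Your first two steps are sound, and your diagnosis in the third is correct. The paper's own proof does not resolve this point. The paper argues directly. It computes $\PP(\widehat{S^\mu_{i,k+1}}=\widehat{v}\mid S^\mu_k=\bm{s})=q_{\mu_i}(\widehat{[\bm{s}]_i},\widehat{v})\Delta$ for $\widehat{v}\neq\widehat{[\bm{s}]_i}$, using coherence to put $[\bm{s}]_i$ and $[\bm{s}\oplus\mu(\bm{s})]_i$ in the same class. It notes that this depends on $\bm{s}$ only through $\widehat{[\bm{s}]_i}$, and then declares the class processes ``independent Markov chains''. That argument shows each $\{\widehat{S^\mu_{i,k}}\}_k$ is Markov with kernel $I+\Delta Q_{\mu_i}$, but independence does not follow from it. For the chain as defined, independence is false. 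Every one-step move from $\bm{s}$ goes either to $\bm{s}$ or to some $(v_1,\dots,s_i',\dots,v_I)$ with $v_j\sim_{\oplus\mu_j}[\bm{s}]_j$. So the joint probability that compartments $1$ and $2$ both move to new classes $\widehat{v_1}\neq\widehat{[\bm{s}]_1}$ and $\widehat{v_2}\neq\widehat{[\bm{s}]_2}$ in the same step is $0$. The product of the marginals, however, is $\Delta^2q_{\mu_1}(\widehat{[\bm{s}]_1},\widehat{v_1})q_{\mu_2}(\widehat{[\bm{s}]_2},\widehat{v_2})$, which is positive whenever both rates are. You should therefore say outright that the literal claim fails. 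Do not try to ``make precise the sense intended'' by passing to a product kernel $\bigotimes_i(I+\Delta_iQ_{\mu_i})$. That is not the kernel $I+\Delta\bigoplus_iQ_{\mu_i}$ of the uniformised joint chain in the statement, so factorising it changes the object instead of proving the claim.

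Your route, unlike the paper's, supplies the correct repair. You lump by $\sim_{\oplus\mu}$ via \autoref{prop:classesAreChains}, identify $\widehat{\mathcal S}$ with $\widehat{\mathcal S_1}\times\dots\times\widehat{\mathcal S_I}$ via \autoref{coro:state-equiv}, and compute the Kronecker-sum generator. This gives the full joint law, not just the marginals:
\begin{itemize}
\item the marginal kernels $I+\Delta Q_{\mu_i}$ follow by summing rows;
\item independence of the continuous-time class processes follows from the Kronecker-sum form;
\item the product-form stationary law that \autoref{coro:stationaryIndep} and the subsequent decision-variable identities actually need follows from $(\bigotimes_i\pi_i)(\bigoplus_iQ_{\mu_i})=0$ whenever each $\pi_iQ_{\mu_i}=0$.
\end{itemize}
The last point holds for the single-$\Delta$ uniformised chain as well, with no pathwise independence required.

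Two small corrections. First, independence, even in continuous time, requires a product-form (e.g.\ deterministic) initial distribution; neither you nor the paper mentions this. Second, in the paper's convention $p_\Delta=1+q\Delta$, the parameter $\Delta$ is a time step, so your per-compartment clocks should have rates summing to $1/\Delta$, not to $\Delta$.
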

\begin{proof}
See \autoref{app:decompositionWorks}.
\end{proof}
We can also provide a continuous-time corollary of \autoref{prop:decompositionWorks} that relates the continuous-time controlled compartment CTMDPs to the decomposed uniformised class-aggregated MDPs.  Let $S^\mu(t)$ be the CTMC induced by controlling a stochastically decomposable impulsive CTMDP with a coherent separable policy $\mu$. Let $V^{\mu_i}(t)$ be the stochastic process determined by the $i$th compartment, i.e., the CTMDP $\mathfrak{C}_i$ controlled by $\mu_i$ (note that this is not exactly the same as $[S^\mu(t)]_i$, as updates from $[S^\mu(t)]_i$ to $[S^\mu(t)]_i\oplus\mu_i([S^\mu(t)]_i)$ in the latter can be triggered by events in other compartments, whereas $V^{\mu_i}$ is a wholly independent process not influenced by exogenous information from other compartments). Let $\widehat{V^{\mu_i}}(t)$ be the stochastic process defined by taking the $\sim_{\oplus\mu_i}$ equivalence classes of $V^{\mu_i}(t)$. 
\begin{corollary}
\label{coro:unifToCont}
    The process $ \widehat{V^{\mu_i}}(t)$ is a continuous-time Markov chain that is equivalent in distribution to $\{S_{i,k}^\mu\}_{k=1}^\infty$ under uniformisation.
\end{corollary}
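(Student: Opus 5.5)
The plan is to identify both processes with the same lumped chain. I read the statement as comparing $\widehat{V^{\mu_i}}(t)$ with the class-aggregated compartment process $\{\widehat{S^\mu_{i,k}}\}_{k=1}^\infty$ of \autoref{prop:decompositionWorks}. Without the hat the claim would not typecheck, since $\widehat{V^{\mu_i}}$ takes values in classes.

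\textbf{Step 1: $\widehat{V^{\mu_i}}$ is a CTMC.} Since $\mu$ is coherent and separable, each $\mu_i$ is coherent by \autoref{lemma:coherentIffSubCoherent}. Next I will check that $\sim_{\oplus\mu_i}$ is as strong as $\sim^{q^+}_{\mu_i}$ on $\mathcal{S}_i$. This holds because impulsively equivalent pairs have the same post-decision state. By \eqref{impulsiveRates} they therefore have the same generalised rates, which is the observation made right after the definition of impulsive equivalence. Applying \autoref{coro:classesAreChainsCont} to $\mathfrak{C}_i$ under $\mu_i$ with $R=\sim_{\oplus\mu_i}$ then shows that $\widehat{V^{\mu_i}}(t)$ is a CTMC with rates
\[ q_{\mu_i}(\widehat{v},\widehat{v}') = \sum_{v''\in\widehat{v}'} q_i^+(v\oplus\mu_i(v),v'') \quad\text{for } \widehat{v}\neq\widehat{v}'. \]
By \autoref{prop:classesAreChains}, its uniformisation with parameter $\Delta$ has transition probabilities $q_{\mu_i}(\widehat{v},\widehat{v}')\Delta$ off the diagonal.

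\textbf{Step 2: the one-step law of $\widehat{S^\mu_{i,k}}$.} Fix a product state $\bm{s}$ with $[\bm{s}]_i=v$. The uniformised product chain makes one of three moves.
\begin{enumerate}[(a)]
\item It takes a dummy step with the remaining probability, staying at $\bm{s}$. The class of compartment $i$ is then unchanged.
\item An event occurs in some compartment $j\neq i$. Compartment $i$ then moves to $v\oplus\mu_i(v)$, which is $\sim_{\oplus\mu_i}$-equivalent to $v$ by \autoref{lemma:coherenceImpliesRelated}. So the class again does not change.
\item An event occurs in compartment $i$ itself. By the definition of the cost-free direct product together with \eqref{impulsiveRates}, the move to $v''$ has probability $q_i^+(v\oplus\mu_i(v),v'')\Delta$.
\end{enumerate}
Summing case (c) over $v''\in\widehat{v}'$ gives exactly $q_{\mu_i}(\widehat{v},\widehat{v}')\Delta$ for $\widehat{v}'\neq\widehat{v}$. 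This value depends neither on the other components of $\bm{s}$ nor on the representative $v$. That is consistent with the independence and Markov property already given by \autoref{prop:decompositionWorks}, and it identifies the transition matrix of $\widehat{S^\mu_{i,k}}$ with the one from Step 1.

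\textbf{Step 3: matching parameters and initial laws.} I must choose a single $\Delta$ that is admissible for both processes. Total outflow rates in the product dominate those of each compartment, so a $\Delta$ valid for the product is also valid for $\mathfrak{C}_i$. With a common initial class distribution, the two chains then agree in finite-dimensional distributions. Equivalence under uniformisation follows from the standard correspondence between a CTMC and its uniformised chain subordinated to a Poisson($1/\Delta$) clock.

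\textbf{Main obstacle.} I expect Step 2(b) to be the delicate point. Events in other compartments genuinely move $[S^\mu]_i$, as the earlier remark stresses, so the whole argument relies on coherence to guarantee that these moves stay inside a $\sim_{\oplus\mu_i}$ class. I also need to handle the self-transition rates $q^+(\cdot,\cdot,\cdot)$ carefully. Only the off-diagonal class totals should be compared; the diagonal then follows from row sums.
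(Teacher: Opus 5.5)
Your proposal is correct and follows the paper's route. The paper applies \autoref{coro:classesAreChainsCont} to $\mathfrak{C}_i$ under $\mu_i$ with $R=\sim_{\oplus\mu_i}$, then matches the resulting rates against the one-step class transition probabilities from the proof of \autoref{prop:decompositionWorks}, which your Step 2 re-derives. Reading the statement with $\widehat{S^\mu_{i,k}}$ in place of $S^\mu_{i,k}$ is the intended meaning, and your extra checks (that $\sim_{\oplus\mu_i}$ refines stochastic equivalence, and that a common admissible $\Delta$ exists) fill in details the paper leaves implicit.
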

\begin{proof}
    This follows immediately from \autoref{coro:classesAreChainsCont} and inspection of the transition probabilities derived in the proof for \autoref{prop:decompositionWorks}.
\end{proof}
The interpretation of the above corollary is somewhat subtle. First, we discuss the notion of $V^{\mu_i}(t)$ and $[S^\mu(t)]_i$ being distinct. Suppose that a two-compartment CTMDP $\mathfrak{C}$ being controlled by a coherent separable policy $\mu$ is in state $\bm{s}=(s_1,s_2)$, with a non-zero action being taken in both compartments. Upon the next transition, the process moves to a new state of the form $(s_1',s_2\oplus a_2)$ or $(s_1\oplus a_1, s_2')$, so the new state reflects an event in one of the compartments and reflects taking the impulsive action in the other. This is what the whole process $S^\mu(t)$ would tell us, meaning the state of one compartment can be updated to reflect the impulsive action being taken, when the event actually happened in the other compartment. As such, the process $[S^\mu(t)]_1$ is dependent on transition times in $[S^\mu(t)]_2$ and vice versa. However, this is not the case with $V^{\mu_i}(t)$, the compartment CTMDP $\mathfrak{C}_i$ controlled by coherent policy $\mu_i$. Consider $V^{\mu_1}(t)$, the controlled first compartment, and suppose it is in state $s_1$ under action $a_1$. This process cannot be updated to $s_1\oplus a_1$ by some exogenous event, and can only be updated by an actual event in its own compartment. However, when factoring in uniformisation and equivalence relations, $\widehat{V^{\mu_i}}(t)$ is equivalent to $\{S_{i,k}^\mu\}_{k=1}^\infty$ upon uniformisation. This further explains why coherent policies are needed. Without the condition of coherence, when $S^\mu(t)$ transitions from $(s_1,s_2)$ to $(s_1\oplus a_1, s_2')$, one could technically take a new action for $s_1\oplus a_1$, whereas this cannot be done for $V^{\mu_i}(t)$ because no transition is triggered. However, under a coherent policy, this issue never arises.


As these two processes have the same stochastic dynamics, we can without loss of generality consider the random variable $\widehat{S^\mu_i}$, which we define as the unique equilibrium distribution of both $\widehat{V^{\mu_i}}(t)$ and $\{\widehat{S_{i,k}^\mu}\}_{k=1}^\infty$ (note that while this may not exist nor be unique in general, it will exist and be unique given the initial distributions $\widehat{V^{\mu_i}}(0)$ or $\widehat{S_{i,0}^\mu}$, and the guarantee of aperiodicity due to continuous time). These are of course linked to the random variable $\widehat{S^\mu}$, the stationary distribution of the whole-state equivalence class process $\widehat{S^\mu}(t)$. The next corollary relates these two distributions.
\begin{corollary}\label{coro:stationaryIndep}
    
    The random variables $\widehat{S^\mu}$ and $\widehat{S^\mu_i}$ satisfy the following equalities:  
    \begin{align*}
        \PP(\widehat{S^\mu} = \widehat{\bm{s}}) &= \prod_{i=1}^I \PP(\widehat{S^{\mu}_i} = [\widehat{\bm{s}}]_i), \text{ for all }\widehat{\bm{s}}\in\widehat{\mathcal{S}},\\
        \PP(\widehat{S^{\mu}_i} = \widehat{v}) &= \sum_{\widehat{\bm{s}}:[\widehat{\bm{s}}]_i = \widehat{v}}\PP(\widehat{S^\mu} = \widehat{\bm{s}}), \text{ for all }i \text{ and }\widehat{v}\in\widehat{\mathcal{S}_i}.
    \end{align*}
\end{corollary}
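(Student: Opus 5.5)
The plan is to obtain the product form from the independence in \autoref{prop:decompositionWorks}, and then get the marginal identity by summing it out.

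First I fix the identifications. By \autoref{coro:state-equiv}, the map $\widehat{\bm{s}}\mapsto([\widehat{\bm{s}}]_1,\dots,[\widehat{\bm{s}}]_I)$ is a bijection between the $\sim_{\oplus\mu}$-classes of $\mathcal{S}$ and the product of the $\sim_{\oplus\mu_i}$-classes of the $\mathcal{S}_i$. Here $[\widehat{\bm{s}}]_i$ means the $\sim_{\oplus\mu_i}$-class of $[\bm{s}]_i$ for any representative $\bm{s}$, which is well defined by the forward direction of the corollary. Under this bijection, $\widehat{S^\mu_k}=(\widehat{S^\mu_{1,k}},\dots,\widehat{S^\mu_{I,k}})$ for all $k$. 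By \autoref{prop:classesAreChains}, $\{\widehat{S^\mu_k}\}$ is a Markov chain, since $\sim_{\oplus\mu}$ is as strong as $\sim_\mu$ and hence as strong as $\sim^{q^+}_\mu$. By \autoref{coro:classesAreChainsCont}, it is the uniformisation of $\widehat{S^\mu}(t)$, so both have the same stationary distribution.

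Next I apply \autoref{prop:decompositionWorks}: the component chains $\{\widehat{S^\mu_{i,k}}\}_k$ are independent Markov chains. From the proof of that proposition, I take that the joint transition probability factorises, $p(\widehat{\bm{s}},\widehat{\bm{s}}')=\prod_i p_i([\widehat{\bm{s}}]_i,[\widehat{\bm{s}}']_i)$. Let $\nu_i$ be the stationary distribution of $\widehat{S^\mu_{i,\cdot}}$, which equals the law of $\widehat{S^\mu_i}$ by \autoref{coro:unifToCont} and the definition of $\widehat{S^\mu_i}$. Then the product measure $\nu(\widehat{\bm{s}})=\prod_i\nu_i([\widehat{\bm{s}}]_i)$ satisfies
\[
\sum_{\widehat{\bm{s}}}\nu(\widehat{\bm{s}})p(\widehat{\bm{s}},\widehat{\bm{s}}')=\prod_i\sum_{\widehat{v}}\nu_i(\widehat{v})p_i(\widehat{v},[\widehat{\bm{s}}']_i)=\nu(\widehat{\bm{s}}'),
\]
so $\nu$ is stationary for the joint chain. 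If the paper's notion of independence is only about trajectories given independent initial laws, I would instead argue directly. Start the chain with independent initial distributions; then each marginal converges to $\nu_i$ by the assumed uniqueness and aperiodicity. Independence at every time $k$ then gives joint convergence to $\prod_i\nu_i$, and this limit must be the stationary distribution of $\widehat{S^\mu}$. Either route yields the first equality, once uniqueness of the equilibrium of $\widehat{S^\mu}$ identifies $\widehat{S^\mu}$ with $\nu$.

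The second equality then follows by summing the first over all $\widehat{\bm{s}}$ with $[\widehat{\bm{s}}]_i=\widehat{v}$. The sum over the other coordinates ranges over the full product of the other class spaces, by the bijection above, and each $\nu_j$ sums to one. As a check, this also agrees with \autoref{prop:aggregatedCTMChasExpectedDistro} applied to the compartment lumping.

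The main obstacle is uniqueness. The joint chain may a priori have several stationary distributions even when each component's equilibrium is unique, because periodicity interactions could break irreducibility of the product. I would handle this with the paper's standing convention that equilibria are taken relative to the given initial distribution. In addition, uniformisation with $\Delta$ strictly below the maximal exit rate gives positive self-loop probabilities, hence aperiodicity. A product of aperiodic chains, each with a single recurrent class, has a single recurrent class, so its stationary distribution is unique.
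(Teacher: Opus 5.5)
The step that fails is the factorisation $p(\widehat{\bm s},\widehat{\bm s}')=\prod_i p_i([\widehat{\bm s}]_i,[\widehat{\bm s}']_i)$. The proof of \autoref{prop:decompositionWorks} only computes the conditional law of a single coordinate given the whole state. It does not give a product kernel, and the kernel is in fact not a product.

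In the uniformised chain, every transition out of $\bm s$ is an event in one compartment $i$. Each other coordinate merely jumps to its post-decision state $[\bm s]_j\oplus_j\mu_j([\bm s]_j)$, which by coherence lies in the same $\sim_{\oplus\mu_j}$ class. So at the class level at most one coordinate moves per step, and $p(\widehat{\bm s},\widehat{\bm s}')=0$ whenever $\widehat{\bm s}'$ differs from $\widehat{\bm s}$ in two coordinates. Your product instead assigns such moves probability of order $\Delta^2$, and the remaining entries also disagree at order $\Delta^2$. The true kernel is additive:
\[
p=I+\Delta\widehat Q,\qquad \widehat Q=\sum_{i=1}^I I\otimes\cdots\otimes I\otimes\widehat Q_i\otimes I\otimes\cdots\otimes I,
\]
where $\widehat Q_i=(q_{\mu_i}(\widehat v,\widehat v'))$ is the class-level generator of compartment $i$. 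This is a Kronecker sum, not a Kronecker product.

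Your fallback route has the same defect. Take deterministic initial states in which both coordinates have positive exit rates. Both coordinates then move at step one with probability $0$, not with the product of the marginal probabilities, so the uniformised coordinate chains are not independent at each $k$. This is exactly the independence that the paper's own one-line proof invokes. Genuine independence holds only in continuous time, where the summands of $\widehat Q$ commute and $e^{t\widehat Q}=\bigotimes_i e^{t\widehat Q_i}$.

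The repair is one line and keeps your architecture: for $\nu=\bigotimes_i\nu_i$, $\nu\widehat Q=\sum_i\nu_1\otimes\cdots\otimes(\nu_i\widehat Q_i)\otimes\cdots\otimes\nu_I=0$, hence $\nu p=\nu$. With this substitution your route is more rigorous than the paper's: you show the product measure is stationary and then invoke uniqueness. The paper simply writes $\PP(\bigcap_i\{\cdot\})=\prod_i\PP(\cdot)$ ``by independence'' and never addresses uniqueness.

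Your uniqueness argument, however, is aimed at the wrong chain. The periodicity interactions you worry about belong to the tensor-product kernel $\bigotimes_i p_i$, not to the actual chain. In the Kronecker-sum chain coordinates move one at a time. So if each $\widehat Q_i$ has a single recurrent class $R_i$, then $\prod_i R_i$ is closed, communicating and reachable from every state, which gives uniqueness with no aperiodicity argument.

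Your derivation of the second identity is fine: you sum the first over the fibre, using the bijection from \autoref{coro:state-equiv}. The paper instead identifies $\widehat{S^\mu_i}$ with the $i$th projection of $\widehat{S^\mu}$ and sums over the fibre, which amounts to the same thing.
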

\begin{proof}
    By \autoref{coro:unifToCont}, we know the $\widehat{S^{\mu}_i}$ are the stationary distributions of the discrete-time Markov chains $\widehat{S^{\mu}_{i,k}}$, and that the $i$ chains are independent. Firstly, we see:
    \begin{align*}
        \PP(\widehat{S^\mu} = \widehat{\bm{s}}) &= \PP\left(\bigcap_{i=1}^I \left(\left[\widehat{S^\mu}\right]_i = [\widehat{\bm{s}}]_i\right)\right)\\
        &= \prod_{i=1}^I \PP(\widehat{S^\mu_i} = [\widehat{\bm{s}}]_i),\text{ by independence of the }\widehat{S^\mu_i}.
    \end{align*}
    Secondly, we have: 
    \begin{align*}
        \PP(\widehat{S^\mu_i} = \widehat{v}) &= \PP([\widehat{S^\mu}]_i = \widehat{v})\\
        &= \PP(\widehat{S^\mu}\in\{\widehat{\bm{s}}:[\widehat{\bm{s}}]_i = \widehat{v}\})\\
        &= \sum_{\widehat{\bm{s}}:[\widehat{\bm{s}}]_i = \widehat{v}}\PP(\widehat{S^\mu} = \widehat{\bm{s}}).
    \end{align*} 
    \end{proof}
\begin{remark}
In the above corollary we take the ``$i\Th$ compartment of an equivalence class''. Intuitively, this is the same as taking any element of that class, taking its $i\Th$ compartment, then taking the $i\Th$ class of that compartment. A rigorous statement and proof of this can be found in \autoref{app:ithProjectionsAndCanonicalSurjections}. 
\end{remark}
In more familiar notation, if we take $\widehat{\bm{\pi}^\mu}$ to be the $\sim_{\oplus\mu}$ class-aggregated stationary distribution of the controlled whole CTMDP $\mathfrak{C}$ under CS policy $\mu$ , and take $\widehat{\bm{\pi}^{\mu_i}}$ to be the $\sim_{\oplus\mu_i}$ class-aggregated stationary distribution of the controlled compartment CTMDPs $\mathfrak{C}_i$ under CS policies $\mu_i=[\mu]_i$, then we have $\widehat{\pi^\mu}(\widehat{\bm{s}}) = \prod_{i=1}^I \widehat{\pi^{\mu_i}}([\widehat{\bm{s}}]_i)$ and $\widehat{\pi^{\mu_i}}(\widehat{v}) = \sum_{\widehat{\bm{s}}:[\widehat{\bm{s}}]_i = \widehat{v}}\widehat{\pi^\mu}(\widehat{\bm{s}}).$ We can also write this in terms of decision variables for OP. Let $\mu$ be a CS policy, $\pi(\bm{s},\bm{a})$ be the corresponding basic feasible solution to OP for the whole CTMDP $\mathfrak{C}$, and $\pi_i(v,b)$ for $v\in\mathcal{S}_i, b\in\mathcal{A}_i(v)$ be the corresponding basic feasible solution to OP for the $i$th compartment CTMDP $\mathfrak{C}_i$ under policy $[\mu]_i$. Consider the class-aggregated solutions $\widehat{\pi}(\widehat{\bm{s},\bm{a}}) := \sum_{(\bm{s}',\bm{a}')\in\widehat{\bm{s},\bm{a}}}\pi(\bm{s}',\bm{a}')$ under $\sim_\oplus$ and $\widehat{\pi_i}(\widehat{v,b})=\sum_{(v',b')\in\widehat{v,b}}\pi_i(v',b')$ under $\sim_{\oplus_i}$. Immediately from \autoref{lemma:s-aHat} and \autoref{coro:stationaryIndep} we have:
\begin{align}
    \widehat{\pi}(\widehat{\bm{s},\bm{a}}) &= \prod_{i=1}^I\pi_i([\widehat{\bm{s},\bm{a}}]_i), \label{eq-whole-to-compartment-decision-vars}\\
    \widehat{\pi_i}(\widehat{v,b}) &= \sum_{(v',b')\in\widehat{v,b}}\sum_{(\bm{s},\bm{a}):[\bm{s},\bm{a}]_i=(v',b')}\pi(\bm{s},\bm{a}) = \sum_{\widehat{\bm{s},\bm{a}}:[\widehat{\bm{s},\bm{a}}]_i=(v',b')}\widehat{\pi}(\widehat{\bm{s},\bm{a}}). \label{eq-compartment-to-whole-decision-vars}
\end{align}
The above equations will be useful for the analysis in \autoref{paper2:solution}. This concludes the theoretical analysis of decomposable impulsive CTMDPs controlled by coherent separable policies. What remains is for us to develop a scalable solution methodology for the decomposable framework. The state and action spaces of a decomposable impulsive CTMDP grow exponentially in the number of compartments, implying that this framework suffers from the curse of dimensionality. The next section proposes a solution methodology that leverages their decomposable nature.
\section{Approximate Solution Methodology}
\label{paper2:solution}
In this section we theoretically motivate and outline a solution methodology for decomposable impulsive CTMDPs. In \autoref{paper2:method:bd} we show that, with additional constraints that enforce a coherent separable policy, the linear program associated with a decomposable impulsive CTMDP is equivalent to a block-diagonal program with one block per compartment CTMDP. In the single-objective case, this offers an obvious decomposition into separate subproblems that collectively form a solution for the whole problem. In \autoref{paper2:method:dw}, we tackle the multi-objective case and leverage the block-diagonal structure by applying Dantzig-Wolfe decomposition. This yields multi-objective subproblems that are easier to solve, and can be pieced back together to form near-Pareto-optimal solutions to the whole problem using a multi-objective integer program. {\color{black} \autoref{paper2:method:linear} follows by providing linearisations to yield standard integer linear programs, which in turn can be solved by standard MILP solvers.} Recognising the potential computational difficulty of this exact methodology, in \autoref{paper2:method:framework} we propose a relaxation in order to obtain a heuristic solution framework.
\subsection{Block-Diagonalisation}
\label{paper2:method:bd}
The core requirement of our heuristic is that the decomposable impulsive CTMDP will be controlled using a coherent separable policy. In the terminology of \cite{powell2022RLSO_PFA}, we call this a \emph{policy function approximation}, or PFA. In order to find the best such policy, we must add additional constraints to the original problem OP. To achieve this, we must introduce binary variables to avoid non-linear constraints. This yields a problem that we call the Coherent Separable Mixed Integer Linear Program, or CS-MILP. Note that this is not a problem formulation that one would implement using a MILP solver, but a theoretical step towards a decomposition-based method. 
\begin{align}
    &\text{(CS-MILP)} &\min_{\bm{\pi}, \bm{y}} &\,g_1 = \sum_{\bm{s},\bm{a}\in\mathcal{SA}}c_1(\bm{s}\oplus \bm{a})\pi(\bm{s},\bm{a}) \nonumber\\
    &            & \vdots \nonumber\\
    &            &\min_{\bm{\pi},\bm{y}}     &\,g_K = \sum_{\bm{s},\bm{a}\in\mathcal{SA}}c_K(\bm{s}\oplus \bm{a})\pi(\bm{s},\bm{a}) \nonumber\\
    &            &\text{s.t. } & \eqref{op-cons-balance}-\eqref{op-cons-pos} \label{dcimilp-cons-carry}\\
    &            &             & \pi(\bm{s},\bm{a})\leq y(\bm{s},\bm{a}) \text{ for all }(\bm{s},\bm{a})\in\mathcal{SA}, \label{dcimilp-cons-pi-y}\\ 
    &            &             & \sum_{\bm{a}'\in\mathcal{A}(\bm{s}),\bm{a}'\neq \bm{a}}\pi(\bm{s},\bm{a}')\leq 1 - y(\bm{s},\bm{a}), \text{ for all }(\bm{s},\bm{a})\in\mathcal{SA}, \label{dcimilp-cons-det}\\
    &            &             &  \sum_{\bm{a}'\in\mathcal{A}(\bm{s}),\bm{a}'\neq0}\pi(\bm{s}\oplus \bm{a},\bm{a}') \leq 1 - y(\bm{s},\bm{a}), \text{ for all }(\bm{s},\bm{a})\in\mathcal{SA},\\&&&\label{dcimilp-cons-coherent}\\ 
    &            &             &\sum_{\substack{(\bm{s}',\bm{a}')\in\mathcal{SA}:\\ [\bm{s}']_i = [\bm{s}]_i, [\bm{a}']_i\neq[\bm{a}]_i}}\pi(\bm{s}',\bm{a}')\leq 1 - y(\bm{s},\bm{a}),\text{ for all }(\bm{s},\bm{a})\in\mathcal{SA}, \\&&&\label{dcimilp-cons-indep}\\ 
    &            &             & y(\bm{s},\bm{a})\in\{0,1\},\text{ for all }(\bm{s},\bm{a})\in\mathcal{SA}.\label{dcimilp-cons-bin}
\end{align}
We associate with each state-action pair a binary variable $y(\bm{s},\bm{a})$ indicating whether or not our policy uses action $\bm{a}$ in state $\bm{s}$. Constraint \eqref{dcimilp-cons-pi-y} enforces this relation. Constraint \eqref{dcimilp-cons-det} ensures that the policy is deterministic; that is, if $\pi(\bm{s},\bm{a})>0$, then $y(\bm{s},\bm{a})=1$, so this constraint forces $\pi(\bm{s},\bm{a}')=0$ for all $\bm{a}'\neq \bm{a}$. Constraint \eqref{dcimilp-cons-coherent} ensures that the policy is coherent, so if action $\bm{a}$ is used in a recurrent state $\bm{s}$ (i.e., $y(\bm{s},\bm{a})=1$), then we force $\bm{a}'=0$ to be the only allowable action for state $\bm{s}\oplus \bm{a}$. Constraint \eqref{dcimilp-cons-indep} ensures that the policy is separable, so if action $\bm{a}$ is used in state $\bm{s}$, then for all other states $\bm{s}'$ such that $[\bm{s}']_i = [\bm{s}]_i$ we ensure that the allowable actions $\bm{a}'$ satisfy $[\bm{a}']_i=[\bm{a}]_i$. 

We wish to ``block-diagonalise'' CS-MILP by splitting it into $I$ independent CTMDPs. To do this, we define a new problem that we call the Decomposed Coherent Mixed Integer Program, or DC-MIP. Again, this is not a formulation to be solved directly, but a theoretical tool. 
\begin{align}
    \text{(DC-MIP)} \quad&\min_{\bm{\pi}_i,\bm{y}_i} \,g_1=f_1\left(\left(\sum_{(s_i,a_i)\in\mathcal{S}\mathcal{A}_i}c_{i,1}(s_i\oplus a_i)\pi_i(s_i,a_i)\right)_{i=1}^I\right)\\
        & \quad\vdots \nonumber\\
        &\min_{\bm{\pi}_i,\bm{y}_i} \,g_K=f_K\left(\left(\sum_{(s_i,a_i)\in\mathcal{S}\mathcal{A}_i}c_{i,K}(s_i\oplus a_i)\pi_i(s_i,a_i)\right)_{i=1}^I\right)\\
        &\text{s.t. for each $i\in\{1,\dots,I\}$} \nonumber\\
        &\quad\sum_{(s_i,a_i)\in\mathcal{SA}_i}q_i(s_i,a_i,s'_i)\pi_i(s_i,a_i) = 0,\text{ for all }s'_i\in\mathcal{S}_i, \label{ddcmilp-cons-balance}\\             &\quad\sum_{(s_i,a_i)\in\mathcal{SA}_i}\pi_i(s_i,a_i) = 1, \label{ddcmilp-cons-norm}\\
        &\quad\pi_i(s_i,a_i)\geq 0,\text{ for all }(s_i,a_i)\in\mathcal{SA}_i. \label{ddcmilp-cons-pos}\\
        &\quad \pi_i(s_i,a_i)\leq y_i(s_i,a_i), \text{ for all }(s_i,a_i)\in\mathcal{SA}_i, \label{ddcmilp-cons-y-pi}\\
        &\quad \sum_{a'_i\in\mathcal{A}_i(s_i),a'_i\neq a_i}\pi_i(s_i,a'_i)\leq 1 - y_i(s_i,a_i),\text{ for all }(s_i,a_i)\in\mathcal{SA}_i, \label{ddcmilp-cons-det}\\
        &\quad \sum_{a'_i\in\mathcal{A}_i(s),a'_i\neq0}\pi_i(s_i\oplus a_i,a'_i) \leq 1 - y_i(s_i,a_i),\text{ for all }(s_i,a_i)\in\mathcal{SA}_i,\\&\label{ddcmilp-cons-coherent}\\
        &\quad y_i(s_i,a_i)\in\{0,1\},\text{ for all }(s_i,a_i)\in\mathcal{SA}_i.\label{ddcmilp-cons-bin}
\end{align}
Note that in the objective functions of the DC-MIP, the $\pi_i$ refer to the state-action frequencies of the compartment CTMDPs, and are not defined in terms of the global state-action frequencies as in the definition of the decomposable cost function. For each of the $i$ compartment CTMDPs, constraints \eqref{ddcmilp-cons-balance} give the balance equations, \eqref{ddcmilp-cons-norm} and \eqref{ddcmilp-cons-pos} ensure $\pi_i$ is a valid probability distribution, \eqref{ddcmilp-cons-y-pi} ensures $y_i(s_i,a_i)$ is 1 if action $a_i$ is used in state $s_i$ if $s_i$ is recurrent, \eqref{ddcmilp-cons-det} ensures that policies are deterministic, \eqref{ddcmilp-cons-coherent} ensures that policies are coherent, and \eqref{ddcmilp-cons-bin} ensures that the $y_i$ variables are binary. 

We observe that there are no linking constraints between any of the $i$ sub-CTMDPs, so this problem has a strict block-diagonal structure, allowing for an easy Dantzig-Wolfe decomposition. However, before this, it must be shown that the two problems CS-MILP and DC-MIP are in some way equivalent. The sense of equivalence we require is that for any solution in one programme, there exists a solution to the other programme with the same objective values. In order to prove this, we will show that it is possible to move freely between the space of solutions to either programme (i.e., the mixed integer programming perspective) in such a way that objective values are preserved. As this result is somewhat expected, we simply state the theorem here and defer the proof to the appendix.

\begin{theorem}
\label{thm:CS-MILP_and_DC-MIP_equiv}
    There exists a solution to CS-MILP with objective value $\bm{g}=\bm{\Bar{g}}$ if and only if there exists a solution to DC-MIP with objective value $\bm{g}=\bm{\Bar{g}}$.
\end{theorem}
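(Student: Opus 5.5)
We prove the two directions separately. In each direction we move between feasible solutions of CS-MILP and of DC-MIP through the coherent separable (CS) policy they encode, and we use the decomposable cost structure to show that objective values are preserved. The key bridge is \eqref{eq-compartment-to-whole-decision-vars}: the compartment marginals $\pi_i(v,b)=\sum_{[\bm{s},\bm{a}]_i=(v,b)}\pi(\bm{s},\bm{a})$ of a whole-CTMDP solution coincide with the state-action frequencies of compartment $i$ under the sub-policy $\mu_i$. Once that is in hand, the definition of a decomposable cost function gives $g_k(\pi)=f_k\bigl((\sum c_{i,k}(v\oplus b)\pi_i(v,b))_i\bigr)$, which is exactly the DC-MIP objective.

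\textbf{($\Rightarrow$) From CS-MILP to DC-MIP.} Take a feasible $(\bm{\pi},\bm{y})$ of CS-MILP.
\begin{enumerate}
\item Read off a policy. Constraints \eqref{dcimilp-cons-pi-y}--\eqref{dcimilp-cons-det} make the policy deterministic on recurrent states, \eqref{dcimilp-cons-coherent} makes it coherent, and \eqref{dcimilp-cons-indep} makes it separable. Hence $\bm{\pi}$ is the state-action frequency of some DS policy $\mu$ whose restriction to recurrent states is coherent and separable.
\item Extend to a full CS policy. On transient states we are free to choose actions. For each compartment $i$ we define $\mu_i$ on sub-states appearing in recurrent global states via \eqref{dcimilp-cons-indep}, and set $\mu_i=0$ elsewhere. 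This needs a check: setting $\mu_i=0$ keeps coherence, and it does not change $\bm{\pi}$, since changing actions only at transient states leaves the stationary distribution unchanged.
\item Build the DC-MIP solution. Set $\pi_i$ to be the compartment marginals and $y_i(v,b)=\II\{\mu_i(v)=b\}$. The frequencies $\pi_i$ are those of $\mathfrak{C}_i$ under $\mu_i$, by \autoref{coro:stationaryIndep} together with \eqref{eq-compartment-to-whole-decision-vars}. They therefore satisfy \eqref{ddcmilp-cons-balance}--\eqref{ddcmilp-cons-pos}, and the policy constraints \eqref{ddcmilp-cons-y-pi}--\eqref{ddcmilp-cons-bin} hold because $\mu_i$ is deterministic and coherent, by \autoref{lemma:coherentIffSubCoherent}.
\item Match objectives. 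The decomposable cost definition gives equality of all $K$ objectives.
\end{enumerate}

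\textbf{($\Leftarrow$) From DC-MIP to CS-MILP.} Take a feasible $(\bm{\pi}_i,\bm{y}_i)_i$ of DC-MIP.
\begin{enumerate}
\item Extract compartment policies. From each block, extract a deterministic coherent $\mu_i$: use $y_i$ on recurrent states and $0$ on transient ones, as above.
\item Assemble the global policy. Define the separable $\mu(\bm{s})=(\mu_i([\bm{s}]_i))_i$, which is coherent by \autoref{lemma:coherentIffSubCoherent}. Let $\bm{\pi}$ be the basic feasible solution of OP for $\mu$; it exists because each compartment and its product are assumed unichain/weakly communicating. Set $y(\bm{s},\bm{a})=\II\{\mu(\bm{s})=\bm{a}\}$.
\item Check CS-MILP feasibility. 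Constraints \eqref{dcimilp-cons-carry}--\eqref{dcimilp-cons-det} and \eqref{dcimilp-cons-bin} are immediate. Coherence gives \eqref{dcimilp-cons-coherent}: if $y(\bm{s},\bm{a})=1$ then all $\pi(\bm{s}\oplus\bm{a},\bm{a}')$ with $\bm{a}'\neq 0$ vanish. Separability gives \eqref{dcimilp-cons-indep}: every pair with $[\bm{s}']_i=[\bm{s}]_i$ and $[\bm{a}']_i\neq[\bm{a}]_i$ has $\pi=0$. Here we must treat the case where $y(\bm{s},\bm{a})=1$ but $\bm{s}$ is transient; the constraint still holds because the policy is globally separable.
\item Match objectives. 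By \eqref{eq-compartment-to-whole-decision-vars} the marginals of $\bm{\pi}$ are the frequencies of $\mathfrak{C}_i$ under $\mu_i$. We need these to equal the given $\pi_i$. This holds because the compartment balance equations with the policy fixed have a unique solution under the unichain assumption. The decomposable-cost identity then yields the same $\bm{g}$.
\end{enumerate}

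\textbf{Main obstacle.} The delicate step is the identification of marginals with compartment frequencies. \autoref{coro:stationaryIndep} is stated for $\sim_{\oplus\mu}$ class-aggregated distributions, not for raw frequencies. So we first show that for a coherent policy the class-aggregated and raw marginals agree when weighted by the costs $c_{i,k}(v\oplus b)$. This holds because these costs are constant on $\sim_{\oplus_i}$ classes, so by \autoref{lemma:s-aHat} only class totals enter the sums. Consequently we only need equality of class-aggregated marginals, which \autoref{coro:stationaryIndep} supplies. The secondary nuisance is the treatment of transient states in the $y$ variables, which we handle by the zero-action completion described above.
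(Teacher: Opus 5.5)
Your overall route is the paper's. In both directions you pass through the CS policy and build the other problem's solution from it. You then match objectives using the fact that $c_{i,k}(v\oplus b)$ is constant on $\sim_{\oplus_i}$-classes, so only class totals matter, and \autoref{coro:stationaryIndep} supplies those totals; this is exactly the paper's chain of equalities through $\widehat{\pi}_i(\widehat{s_i,a_i})$. The ($\Leftarrow$) direction works once step 4 is read through your ``main obstacle'' paragraph. Its first sentence has the same raw-versus-class conflation discussed below, but there only class totals are needed.

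However, step 3 of ($\Rightarrow$) fails. You take $\pi_i$ to be the raw compartment marginals of $\bm{\pi}$ and assert that they are the frequencies of $\mathfrak{C}_i$ under $\mu_i$, hence satisfy \eqref{ddcmilp-cons-balance}. That identification holds only for $\sim_{\oplus_i}$-class totals, which is all that \eqref{eq-compartment-to-whole-decision-vars} and \autoref{coro:stationaryIndep} assert. The paper explicitly warns that $[S^\mu(t)]_i$ differs from $V^{\mu_i}(t)$, because events in other compartments push compartment $i$ to its post-decision state.

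Here is a concrete counterexample.
\begin{itemize}
\item Let $\mathfrak{C}_1$ have states $A,B$, with $\mathcal{A}_1(A)=\{0,1\}$, $\mathcal{A}_1(B)=\{0\}$, $A\oplus_1 1=B$, $q_1^+(A,B)=\alpha$ and $q_1^+(B,A)=\beta$.
\item Let $\mathfrak{C}_2$ have states $X,Y$, only the zero action, and $q_2^+(X,Y)=q_2^+(Y,X)=1$.
\item Take the CS policy $\mu_1(A)=1$, $\mu_1(B)=0$, $\mu_2\equiv 0$.
\end{itemize}
On its own, $\mathfrak{C}_1$ under $\mu_1$ has $A$ absorbing: at $A$ it uses $B$'s rates, which give only a dummy self-transition. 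Its frequencies are therefore $\pi_1(A,1)=1$ and $\pi_1(B,0)=0$. In the product, each compartment-2 event moves $(A,z)$ to $(B,z')$, and compartment-1 events return $(B,z)$ to $(A,z)$ at rate $\beta$. So the marginals are $\pi_1(A,1)=\beta/(1+\beta)$ and $\pi_1(B,0)=1/(1+\beta)$. These violate the balance equation of $\mathfrak{C}_1$ at $A$, whose left-hand side is $q_1(A,1,A)\frac{\beta}{1+\beta}+q_1(B,0,A)\frac{1}{1+\beta}=0+\frac{\beta}{1+\beta}\neq 0$. Only the class total agrees: both pairs lie in the class of post-decision state $B$, with total mass $1$ in each case.

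The repair is what the paper does. Take as DC-MIP witness the stationary frequencies of $\mathfrak{C}_i$ itself, $\pi_i(v,b)=\PP(V^{\mu_i}=v)\,\II\{b=\mu_i(v)\}$, with $y_i(v,b)=\II\{b=\mu_i(v)\}$. Feasibility is then immediate from coherence of $\mu_i$. For the objectives, apply your class-aggregation argument: by the decomposable-cost definition, the CS-MILP objective is $f_k$ of cost-weighted raw marginals of $\bm{\pi}$. These weighted sums depend only on class totals, and the class totals coincide with those of $\pi_i$. With this change your proof coincides with the paper's.
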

\begin{proof}
See \autoref{app:CS-MILP_and_DC-MIP}.
\end{proof}
As such, further analyses need only consider the decomposed problem DC-MIP, as every solution for CS-MILP has an equivalent solution in DC-MIP and vice versa. In the single-objective case, this completes our analysis, as each block of DC-MIP can be optimised separately alongside its decomposed cost function, the integer variables and additional constraints can be omitted and the block reduces to an instance of OP, which can be solved via LP, value iteration or other methods. This will always result in an optimal DS policy which, if needed, can be sequenced to obtain a coherent policy. The coherent policies together then give the coherent policy for the whole system. However, in the multi-objective case, the task of putting the policies together is non-trivial, as we have a set of Pareto-optimal solutions per compartment, and these must be pieced together in order to obtain non-dominated solutions for the whole problem. The next subsection explores how to do this.
\subsection{Dantzig-Wolfe Decomposition}
\label{paper2:method:dw}
To aid in the solution of DC-MIP, we can leverage the block-diagonal structure of the problem and apply \emph{Dantzig-Wolfe decomposition}. Under such a decomposition for a continuous linear problem of the form $\min_{\bm{x}}\{f(\bm{x}):A\bm{x}=\bm{b},x\geq 0\}$ with a block-diagonal component, we recognise that each block of the block-diagonal matrix defines a polytope in the decision space. For each block, we can replace the decision variables with \emph{disaggregated} variables $\lambda_s^i\in[0,1]$, each of which corresponds to a corner of the polytope, or equivalently a basic feasible solution of the subproblem corresponding to that block. By convexity of the feasible region, any solution can be recovered by a weighted sum of the polytope corners, with weights summing to one. When considering DC-MIP, due to the binary variables, the polytope is the convex hull of the integral solutions to each block. Each basic feasible solution of that polytope will correspond to a coherent policy $\mu_i$ that induces a unichain CTMC. Some of these solutions may even correspond to the same policy, due to symmetries on the $y$ variables for transient states. We also recognise that for each coherent policy $\mu_i$, we have corresponding long-run average costs $g_{i,k}(\mu_i) = \sum_{s_i\in\mathcal{S}_i}c_{i,k}(s_i\oplus\mu_i(s_i))\pi_i(s_i,\mu_i(s_i))$. We therefore construct the following decomposed coherent master problem, or DC-MP:
\begin{align}
    \text{(DC-MP)}\quad\min_{\bm{\lambda}}&\,g_1 = f_1\left(\left(\sum_{\mu_i\in\mathcal{M}_i^{DC}}g_{i,1}(\mu_i)\lambda_{\mu_i}^i\right)_{i=1}^I\right) \nonumber \\
                   &\vdots \nonumber \\
                   \min_{\bm{\lambda}}&\,g_K = f_K\left(\left(\sum_{\mu_i\in\mathcal{M}_i^{DC}}g_{i,K}(\mu_i)\lambda_{\mu_i}^i\right)_{i=1}^I\right)\\
                   \text{s.t.}& \text{ for every }i\in\{1,\dots,I\}: \nonumber \\
                   & \sum_{\mu_i\in\mathcal{M}_i^{C}}\lambda_{\mu_i}^i = 1,\\
                   & \lambda_{\mu_i}^i\in\{0,1\},\text{ for all }i\in[I],\mu_i\in\mathcal{M}_i^{C},
\end{align}
where $\mathcal{M}_i^{C}$ is the set of all unichain coherent policies for compartment $i$. We constrain the disaggregated variables $\bm{\lambda}$ to be binary instead of $[0,1]$ as we do not have linking constraints to be concerned about, and we want to keep our solutions deterministic. In this initial formulation, the number of binary variables is very large; however, this can be alleviated by identifying a much smaller subset of variables to be included, if it can be guaranteed that the resulting formulation still finds all Pareto-optimal solutions.
\begin{proposition}\label{prop:notPFimpliesNotPF}
    Let $\mathfrak{C}$ be a decomposable impulsive CTMDP with compartments $\mathfrak{C}_i$. A solution to DC-MP is Pareto-optimal only if the compartment policies $\mu_i$ are Pareto-optimal with respect to the vector objective functions $\bm{g}_i$.
    If a policy $\mu = (\mu_1,\dots,\mu_I)$ has sub-policies $\mu_{i}$ that are not Pareto-optimal, then there exists a solution $\mu'$ with Pareto-optimal sub-policies such that the objective values of $\mu$ are bounded below by $\mu'$.
\end{proposition}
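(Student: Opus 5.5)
The plan is to prove the contrapositive: a solution of DC-MP with a non-Pareto-optimal block gives a dominating solution. The argument rests on the strict monotonicity of the aggregating functions $f_k$ and on the separable form of the DC-MP objectives.

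Because the $\bm{\lambda}^i$ are binary and each block is convex-combination constrained to sum to one, a feasible solution of DC-MP picks exactly one coherent unichain policy $\mu_i\in\mathcal{M}_i^C$ per compartment. So I would identify solutions with tuples $\mu=(\mu_1,\dots,\mu_I)$. The objectives are then
$$g_k(\mu)=f_k\bigl(g_{1,k}(\mu_1),\dots,g_{I,k}(\mu_I)\bigr).$$
Pareto-optimality of $\mu_i$ is understood with respect to the vector $\bm{g}_i(\mu_i)=(g_{i,1}(\mu_i),\dots,g_{i,K}(\mu_i))$ over $\mathcal{M}_i^C$.

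\textbf{Step 1 (one block).} Suppose $\mu_j$ is not Pareto-optimal for some $j$. Then there is $\mu_j'\in\mathcal{M}_j^C$ with $g_{j,k}(\mu_j')\le g_{j,k}(\mu_j)$ for all $k$, and strict inequality for some $k_0$. Replace $\mu_j$ by $\mu_j'$ and leave the other blocks unchanged. This is still feasible for DC-MP, since there are no linking constraints. Because each $f_k$ is component-wise strictly increasing, $g_k$ does not increase for any $k$, and $g_{k_0}$ strictly decreases. So $\mu$ is dominated, which proves the first assertion.

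\textbf{Step 2 (the bounding claim).} Here we need a $\mu'$ with every sub-policy Pareto-optimal. For each $i$ whose $\mu_i$ is not Pareto-optimal, pick $\mu_i'$ Pareto-optimal in $\mathcal{M}_i^C$ that weakly dominates $\mu_i$; keep $\mu_i'=\mu_i$ otherwise. Then apply the monotonicity argument coordinate by coordinate, replacing one block at a time, to get $g_k(\mu')\le g_k(\mu)$ for all $k$. This is the sense in which the objective values of $\mu$ are bounded below by those of $\mu'$.

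The main obstacle is justifying that such a Pareto-optimal $\mu_i'$ always exists. I would argue it via finiteness. $\mathcal{M}_i^C$ is finite, since the state and action spaces are finite, and it is nonempty, since the zero policy is coherent; I would assume, as the paper does throughout, that the relevant policies are unichain. The set of policies weakly dominating $\mu_i$ is therefore finite and nonempty, as it contains $\mu_i$. Any element of this set minimising $\sum_k g_{i,k}$ cannot be strictly dominated by anything in $\mathcal{M}_i^C$: by transitivity of dominance, a policy dominating it would also lie in the set and have strictly smaller sum. So it is Pareto-optimal.

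A small point to note is that Pareto-optimality here is among coherent unichain DS policies, matching the columns of DC-MP. If needed, I would remark that the existence of a coherent optimal policy (\autoref{claim:coherentFromNonCoherent}) means nothing is lost relative to all DS policies for supported points.
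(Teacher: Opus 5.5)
Your proof is correct and follows the paper's argument: replace each non-Pareto-optimal sub-policy by a Pareto-optimal one that dominates it, then use monotonicity of the $f_k$ to get $g_k(\mu')\le g_k(\mu)$. You also supply two details the paper leaves implicit: the finiteness argument showing such a dominating Pareto-optimal $\mu_i'$ exists, and the use of \emph{strict} component-wise monotonicity to obtain strict domination, which the ``only if'' assertion needs and which the paper's weak-inequality argument does not give.
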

\begin{proof}
    Construct $\mu'$ as follows: let $\mu'_i = \mu_i$ if $\mu_i$ is Pareto-optimal; otherwise, choose a Pareto-optimal $\mu_i'$ that dominates $\mu_i$. Now $\bm{g}_i(\mu_i')\leq\bm{g}_i(\mu_i)$ component-wise, and as the $f_k$ are increasing functions, $f_k(g_{1,k}(\mu'_1),\dots,g_{I,k}(\mu'_I)) \leq f_k(g_{1,k}(\mu_1),\dots,g_{I,k}(\mu_I))$ for all $k$.
\end{proof}
By \autoref{prop:notPFimpliesNotPF}, we can reduce the number of decision variables by including only the ones that correspond to a Pareto-optimal compartment policy. We write $\PF(\mathcal{M}_i^{C})$ to be the set of Pareto-optimal polices in $\mathcal{M}_i^{C}$ with respect to $\bm{g}_i$, i.e., the Pareto front. As such we can write the reduced master problem, which we call DC-RMP:
\begin{align}
    \text{(DC-RMP)}\quad\min_{\bm{\lambda}}&\,g_1 = f_1\left(\left(\sum_{\mu_i\in\PF(\mathcal{M}_i^{DC})}g_{i,1}(\mu_i)\lambda_{\mu_i}^i\right)_{i=1}^I\right) \nonumber \\
                   &\vdots \nonumber \\
                   \min_{\bm{\lambda}}&\,g_K = f_K\left(\left(\sum_{\mu_i\in\PF(\mathcal{M}_i^{DC})}g_{i,K}(\mu_i)\lambda_{\mu_i}^i\right)_{i=1}^I\right)\\
                   \text{s.t.}& \text{ for every }1\leq i\leq I: \nonumber \\
                   & \sum_{\mu_i\in\PF(\mathcal{M}_i^{C})}\lambda_{\mu_i}^i = 1,\\
                   & \lambda_{\mu_i}^i\in\{0,1\},\text{ for all }\mu_i\in\PF(\mathcal{M}_i^{C}).
\end{align}
By \autoref{prop:notPFimpliesNotPF}, any Pareto-optimal solution to DC-MP has an equivalent in DC-RMP, and vice versa. If $\PF(\mathcal{M}_i^{DC})$ can be obtained exactly, then we can obtain Pareto-optimal solutions for DC-RMP and in turn for CS-MILP using this decomposition. However, in practice, finding $\PF(\mathcal{M}_i^{DC})$ exactly is difficult, as it refers to the set of DS policies that are Pareto-optimal only with respect to the other DS policies. This means the set contains policies that are not optimal for any mixed-objective scalarisation of the objective function, and are therefore hard to find. As such, we instead consider subsets $\overline{\PF}(\mathcal{M}_i^{DC})$ of the Pareto front which are representative of the front, but which are easier to find.

As currently presented, DC-RMP may be a non-linear MIP due to the $f_k$ function, making it hard to solve. Ideally, we would want an MILP formulation to be solved by an off-the-shelf MILP solver. For the decomposable cost functions discussed in this paper, the following subsection discusses some linearisations.

\subsection{Linearisations}
\label{paper2:method:linear}
We can linearise DC-RMP if we can apply a strictly increasing transformation $T_k$ to each $f_k$ such that $(T_k\circ f_k)(\dots,\sum_{\mu_i\in\PF(\mathcal{M}_i^{DC})}g_{i,k}\lambda_{\mu_i}^i,\dots)$ is a linear combination of the $\lambda_{\mu_i}^i$ whenever the $\lambda_{\mu_i}^i$ satisfy the constraints, as we can then use any MILP solver to solve DC-RMP. This is because, in general for a multi-objective optimisation problem with decision variables $\bm{x}\in \mathcal{X}$, a solution $\bm{x}$ is Pareto-optimal for objectives $(f_1,\dots,f_K)$ if and only if it is also Pareto-optimal for $(T_1\circ f_1,\dots,T_K\circ f_k)$ where the $T_k$ are strictly increasing functions. Such transformations exist for all additive, multiplicative, and logical-OR cost functions. Of course, this is trivially evident for the additive costs case, with $T_k(x) = x$. In the case of multiplicative costs, we choose $T_k(x) = \ln(x)$ and recall $f_k(\bm{x})=\prod_{i=1}^Ix_i$, and find:
\begin{align*}
    (T_k \circ f_k)\left(\left(\sum_{\mu_i\in\PF(\mathcal{M}_i^{C})}g_{i,k}(\mu_i)\lambda_{\mu_i}^i\right)_{i=1}^I\right) &= \ln\lcurl \prod_{i=1}^I \sum_{\mu_i\in\PF(\mathcal{M}_i^{DC})}g_{i,k}(\mu_i)\lambda_{\mu_i}^i\rcurl\\
    &=\sum_{i=1}^I \ln\lcurl \sum_{\mu_i\in\PF(\mathcal{M}_i^{C})}g_{i,k}(\mu_i)\lambda_{\mu_i}^i\rcurl\\
    &= \sum_{i=1}^I \sum_{\mu_i\in\PF(\mathcal{M}_i^{C})}\ln\lcurl g_{i,k}(\mu_i)\rcurl\lambda_{\mu_i}^i,
\end{align*}
where the last equality holds only for binary $\lambda_{\mu_i}^i$ such that $\sum_{\mu_i\in\PF(\mathcal{M}_i^{C})}\lambda_{\mu_i}^i = 1$. Similarly, for the logical-OR case, we choose $T_k(x) = -\ln(1-x)$ and recall $f(\bm{x})=1 - \prod_{i=1}^I(1-x_i)$ and find:
\begin{align*}
    &(T_k \circ f_k)\left(\left(\sum_{\mu_i\in\PF(\mathcal{M}_i^{C})}g_{i,k}(\mu_i)\lambda_{\mu_i}^i\right)_{i=1}^I\right)\\ &= -\ln\lcurl 1 - \left( 1 - \prod_{i=1}^I \left( 1 - \sum_{\mu_i\in\PF(\mathcal{M}_i^{C})} g_{i,k}(\mu_i)\lambda_{\mu_i}^i\right)\right)\rcurl\\
    &= - \sum_{i=1}^I \ln \lcurl 1 - \sum_{\mu_i\in\PF(\mathcal{M}_i^{C})} g_{i,k}(\mu_i)\lambda_{\mu_i}^i\rcurl\\
    &= -\sum_{i=1}^I \sum_{\mu_i\in\PF(\mathcal{M}_i^{C})} \ln\lcurl1-g_{i,k}(\mu_i)\rcurl\lambda_{\mu_i}^i,
\end{align*}
where again the equality holds only for suitable $\lambda_{\mu_i}^i$. Therefore, if all objectives are additive, multiplicative, or logical-OR cost functions, DC-RMP can be reformulated as having linear objective functions, and is therefore a mixed integer linear program that can be solved by any MILP solver. 

\subsection{Solution Framework}
\label{paper2:method:framework}
As it stands, obtaining the full Pareto fronts $\text{PF}(\mathcal{M}_i^C)$ for the subproblems and plugging them into DC-RMP provides an exact solution method for CS-MILP and DC-MIP. However, it does not necessarily follow that such solutions are Pareto-optimal for the original problem OP, as we have made the simplifying assumption throughout that we limit ourselves to separable policies. There are also some remaining problems with this approach. Obtaining $\PF(\mathcal{M}_i^{C})$ can be challenging in its own right even for small CTMDPs, and the set itself can still have a high cardinality and result in an instance of DC-RMP with a large number of binary variables. This is especially true if there's a large number of objectives, leading to more policies being Pareto-optimal. Also, in general, even single-objective MDPs can be difficult or impossible to solve exactly due to the curse of dimensionality. 

To convert our exact methodology into a flexible, scalable approximate methodology, we may simply replace the exact Pareto fronts $\text{PF}(\mathcal{M}_i^C)$ in DC-RMP with \emph{approximate} Pareto fronts $\overline{\text{PF}}(\mathcal{M}_i^C)$, obtaining DC-RMP-Approx. There are two main ways that the solution sets $\overline{\PF}$ are approximate: they may be incomplete (i.e., missing Pareto-optimal solutions), or the solutions present may not be Pareto-optimal. Nevertheless, if the solution set is reasonably representative of the objective space (i.e., captures the different balances between the different objectives), and the extent of suboptimality is acceptable, this method may still be worthwhile in practice. We now present our solution methodology as a meta-algorithm:
\begin{enumerate}
    \item Choose a decomposition $\{\mathfrak{C}_i\}_{i=1}^I$ of the decomposable impulsive CTMDP $\mathfrak{C}$, and identify the compartment cost functions $c_{i,k}$ for each compartment and objective.
    \item Obtain an exact or approximate Pareto front of solutions to each compartment subproblem, taking care to ensure all policies are coherent. This may be done with exact methods by scalarising the problem (e.g. using the weighted-sum) and then using methods such as value iteration or linear programming, or by using approximate methods such as multi-objective reinforcement learning (MORL) \citep{van2014morl} or bespoke heuristics. Where non-coherent policies are found, apply sequencing to obtain coherent policies (but recall that this may result in further suboptimality when the non-coherent policies are suboptimal).
    \item Use the exact fronts or approximate fronts as the subproblem solution sets in DC-RMP or DC-RMP-Approx (respectively), and use a scalarising method together with a MILP solver (or some heuristic solution method) to obtain the Pareto fronts for these problems.
\end{enumerate}
This meta-algorithm allows for a lot of flexibility, including the choice of decomposition, the solution methods for the subproblems, and the handling of the multiple objectives in the reduced master problem.

It remains for us to explain why one might use this solution methodology as opposed to other popular approximate methodologies that could be applied directly to decomposable impulsive CTMDPs, such as MORL. In the case where compartment MDPs are small enough to be solved exactly, this method benefits from the true optimality of the subproblems, and the lack of complications that arise from reinforcement learning methods (such as the choice of value function approximation or neural network architecture, parameter tuning, etc). On the other hand, in the case where compartment MDPs cannot be solved exactly, we argue that it may be easier to apply MORL methods or design bespoke heuristics for the simpler compartment CTMDPs than for the whole problem. In either case, our method can also serve as a benchmark or as a warm start for reinforcement learning methods. For example, we could provide policies from our approximate Pareto front as the baseline policies for methods such as approximate policy iteration or rollout \citep{bertsekas2021rollout}, or as the initial ``actor'' in an actor-critic method \citep{konda1999actor}.

In this section, we have developed a flexible decomposition-based heuristic solution methodology that can be used to approximately solve decomposable impulsive CTMDPs with multiple objectives, and achieved this using Dantzig-Wolfe decomposition. In the next section, we consider an application of the decomposable impulsive CTMDP framework, and demonstrate the proposed solution methodology.
\section{Application}
\label{paper2:app}
In this section we demonstrate an application of our methodology to a bi-objective dynamic maintenance problem, centred on a series-parallel system. Such a system is comprised of multiple subsystems connected in series, with each subsystem made up of redundant components connected in parallel. A subsystem is considered functional if at least one of its components is functional, but the whole system is considered functional only if all of the subsystems are functional. We wish to find Pareto optimal solutions with respect to maintenance costs and reliability. This problem can be thought of as a natural extension of the redundancy allocation problem (RAP), which is an NP-hard non-linear combinatorial optimisation problem arising from reliability engineering, where redundant copies of components can be added to a system and the system only fails when all copies of a critical component are damaged \citep{CHERN1992RAP_NP_HARD}. Of particular relevance to this study are repairable series-parallel systems that make use of Markovian modelling to model system degradation and repairs. \cite{KAYEDPOUR2017repairableMarkovFiniteHorizon} and \cite{tavana2018multistate} both make use of CTMCs in modelling the degradation and repair processes of series-parallel systems, where the former uses binary state modelling (healthy or damaged, with nothing in between), and the latter uses multi-state modelling, allowing for a more granular scale of degradation states. Both models work under the assumption that components are always repaired immediately. We extend the binary state model by relaxing the assumption that components are always repaired and instead suppose that decisions about repairs are made as part of a bi-objective sequential decision making problem under uncertainty. This yields a bi-objective CTMDP, in which it is natural to think of the subsystems with parallel redundancy as decomposable compartments.

In \autoref{paper2:app:probDef} we formulate our problem by first defining the compartment CTMDPs, then taking the direct product of these and choosing the cost functions. In \autoref{paper2:app:method} we discuss the specific implementation of our methodology to this problem. \autoref{paper2:app:results} then provides a comparison between an exact method and our heuristic decomposition-based method in terms of runtime, optimality, and number of solutions in the Pareto fronts. We also apply our methodology to larger instances that cannot be solved exactly in order to demonstrate the scalability of our solution methodology.
\subsection{Problem Definition}
\label{paper2:app:probDef}
We construct our CTMDP using the direct product method, which begins by defining the structures of the compartment CTMDPs with components connected in parallel and then uses these to construct the whole series-parallel CTMDP. Recall that a cost-free impulsive CTMDP is defined as a tuple $\mathfrak{C}_i = \langle\mathcal{S}_i, \left(\mathcal{A}_i(s_i)\right)_{s_i\in\mathcal{S}_i}, 
q^+:\mathcal{S}_i\times\mathcal{S}_i\to\RR_{\geq0}, \oplus_i:\mathcal{SA}_i\to\mathcal{S}_i\rangle.$ We assume that the components in a subsystem are homogeneous, so there is no need to differentiate between specific components. We assume that each component can be in one of three conditions: healthy, damaged, or repairing. A healthy component is one that can be used, a damaged component cannot be used and is not undergoing maintenance, and a repairing component cannot be used but is undergoing maintenance. Including a repairing condition in the state is based on the assumption that after a repair has been started, it must be seen through to completion. Without this inclusion, a repair that is started in one state could be discontinued after a transition. Due to the homogeneity of components within a subsystem, it is sufficient for the state to include the numbers of components that are damaged and repairing, rather than detailing specifically which copies are in which condition.

Suppose that a subsystem $i$ has $M_{i}$ components. The sub-state must give the number of copies of the component that are currently repairing and the number damaged. The number that are healthy can then be inferred from the total number of components $M_i$. We define $\mathcal{S}_{i} := \{(s_{i,1},s_{i,2})\in\ZZ_{\geq0}^2: s_{i,1}+s_{i,2}\leq M_{i}\},$ which gives the set of all possible combinations of the number of components in subsystem $i$ that are repairing ($s_{i,1}$) or damaged ($s_{i,2}$). For notational convenience, we write $s_{i,0}:=M_{i}-s_{i,1}-s_{i,2}$ as the number of healthy components in subsystem $i$. For the action spaces, we define: $\mathcal{A}_{i}(s_{i}):= \{a\in\ZZ:0\leq a\leq s_{i,2}\},$ so the number of components in subsystem $i$ that can be put into repair is bounded above by the number of damaged components in that subsystem. For the impulsive operator, we define $\oplus_{i}:\mathcal{SA}_{i}\to\mathcal{S}_{i}$ as $\bm{s}_{i}\oplus_{i}\bm{a}_{i} = (s_{i,1}+a_{i},s_{i,2}-a_{i})$. Finally, we must define the positive transition rates $q_i^+$. There are two types of event which cause a state transition:
\begin{itemize}
    \item \textbf{Degradation} - The degradation of a component due to an random event. The interarrival times of such events are assumed to be independent and exponentially distributed with rate $\alpha_{i} > 0$ for subsystem $i$. 
    \item \textbf{Successful Repair} - The event that changes the condition of a repairing component to `healthy'. We assume that repair completion times are independent and exponentially distributed with rate $\tau_{i} > 0$ for subsystem $i$.
\end{itemize}
We can then construct the positive transition rates $q^+$: 
$$q^+_i(\bm{s}_i, \bm{s}'_i) = 
\begin{cases}
     s_{i,0}\alpha_{i}, &\bm{s}'_i = \bm{s}_i + \bm{e}_{i,2},\\
     s_{i,1}\tau_{i} , &\bm{s}'_i = \bm{s}_i - \bm{e}_{i,1},\\
     0, & \text{otherwise,}
\end{cases}$$
where $\bm{e}_{i,k}$ is a 2-tuple with $\bm{e}_{i,1}=(1,0)$ and $\bm{e}_{i,2}=(0,1)$. The first line gives the rate of a healthy component in subsystem $i$ failing and becoming damaged and the second line gives the rate of a repairing component in subsystem $i$ completing its repair and becoming healthy. 

We can also construct the vector costs of the compartment CTMDPs. The first cost reflects the resources required by ongoing repairs. If a component in subsystem $i$ is currently under repair then we incur a cost at rate $r_{i}$ per unit time. Our second cost is an indicator function for system failure, where no healthy components are available. Symbolically, we have an operational cost rate
$c^o_i(\bm{s}_i) = s_{i,1}r_i$ and a failure indicator cost rate
$c^f_i(\bm{s}_i) = \II\{s_{i,0} = 0\}$.

To create the cost-free version of the series-parallel model, under the assumption that no event in one subsystem would cause an event or change in another, we take the direct product of the subsystems $\mathfrak{C}_i$ and let $\mathfrak{C}=\bigotimes_{i=1}^I\mathfrak{C}_i$ denote the cost-free decomposable impulsive CTMDP for a series-parallel system. The operational cost of the system is defined as the sum of the operational costs of the subsystems, and hence $c^o(\bm{s}) = \sum_{i=1}^I c_o^i([\bm{s}]_i)$. Clearly, this cost is decomposable, as per \autoref{prop:addMultOR_are_decompable}. The cost of system failure is slightly different, because a series-parallel system fails whenever at least one subsystem fails. As such, we have $c^f(\bm{s}) = \II\{\bigvee_{i=1}^I c_i^f([\bm{s}]_i) = 1\}$, which is decomposable by \autoref{prop:addMultOR_are_decompable} because it is a logical-OR cost function.
\subsection{Methodology}
\label{paper2:app:method}
The aim of our approach is to find a well-populated set of Pareto-optimal, or approximately Pareto-optimal, solutions for our dynamic maintenance problem. This does not necessarily mean finding the entire Pareto front, as this can be computationally very expensive, hard to verify, and can result in a large number of very similar solutions.
\subsubsection{Exact Method}
The first approach, and our point of comparison for the decomposition-based heuristic, is an exact method based on the original problem OP. We scalarise the problem by using the mixed-objectives method, and consider a parameterised single objective of the form 
$$
  g = \sum_{(\bm{s},\bm{a})\in\mathcal{SA}}\left(c^o(\bm{s},\bm{a}) + Pc^f(\bm{s},\bm{a})\right)\pi(\bm{s},\bm{a}).
$$ 
The parameter $P$ acts as a penalty for system failure. We expect low values of $P$ to result in more passive maintenance policies, and high values to result in more active policies. A sufficiently high value of $P$ should yield the policy that always repairs all components as soon as they become damaged. As this is a mixed-objective problem with positive objective weights, solutions to this for any value of $P$ will be Pareto-optimal. We call the resulting problem the scalarised original problem, or S-OP.
\begin{align*}
    &\text{(S-OP)} &\min_{\bm{\pi}}   &\,g= \sum_{(\bm{s},\bm{a})\in\mathcal{SA}}\left(c^o(\bm{s},\bm{a}) + Pc^f(\bm{s},\bm{a})\right)\pi(\bm{s},\bm{a})\\
    &            & \text{s.t.} &\sum_{(\bm{s},\bm{a})\in\mathcal{SA}}q(\bm{s},\bm{a},\bm{s}')\pi(\bm{s},\bm{a}) = 0,\text{ for all }\bm{s}'\in\mathcal{S}, \\
    &            &             &\sum_{(\bm{s},\bm{a})\in\mathcal{SA}}\pi(\bm{s},\bm{a}) = 1,\\
    &            &             &\pi(\bm{s},\bm{a})\geq 0,\text{ for all }(\bm{s},\bm{a})\in\mathcal{SA}.
\end{align*}

The above problem can be solved for any value of $P$ using an LP solver. For implementation details on using the LP solver and selecting values of $P$, see Supplementary Material 2.1.  
\subsubsection{Heuristic Method}
The second approach is to use our decomposition-based heuristic. To do this, we must obtain an approximate Pareto front $\overline{\PF}(\mathcal{M}_i^{DC})$ for each of the sub-CTMDPs $\mathfrak{C}_i$. We can do this by following the exact method described above, scalarising the bi-objective problem using a penalty parameter $P$ and re-solving the problem for a range of values. For implementation details, see Supplementary Material 2.2. The solutions obtained from this method can be recombined via the reduced master problem DC-RMP. After applying the suitable linearising transformation to the second objective function, we obtain the following bi-objective approximate reduced master problem, which we call BO-ARMP:
\begin{align*}
    \text{(BO-ARMP)}\quad\min_{\bm{\lambda}} &\, g^o = \sum_{i=1}^I \sum_{\mu_i\in\overline{\PF}(\mathcal{M}_i^{DC})}g_{i}^o(\mu_i)\lambda_{\mu_i}^i\\
    \min_{\bm{\lambda}} &\, T(g^f) = \sum_{i=1}^I \sum_{\mu_i\in\overline{\PF}(\mathcal{M}_i^{DC})}-\ln\lcurl1-g_i^f(\mu_i)\rcurl\lambda_{\mu_i}^i\\
    \text{s.t.}  &\text{ for every }1\leq i \leq I:\\
                 &\sum_{\mu_i\in\overline{\PF}(\mathcal{M}_i^{DC})}\lambda_{\mu_i}^i=1,\\
                 &\lambda_{\mu_i}^i\in\{0,1\}, \text{ for all }\mu_i\in\overline{\PF}(\mathcal{M}_i^{DC}).
\end{align*}

To handle the bi-objective optimisation, we will use the $\varepsilon-$constraint method, and turn the second objective into a constraint. This yields the $\varepsilon-$constrained approximate reduced master problem, or $\varepsilon-$ARMP:
\begin{align*}
    \text{($\varepsilon$-ARMP)}\quad\min_{\bm{\lambda}} &\, g^o = \sum_{i=1}^I \sum_{\mu_i\in\overline{\PF}(\mathcal{M}_i^{DC})}g_{i}^o(\mu_i)\lambda_{\mu_i}^i\\
    \text{s.t.}  &\text{ for every }1\leq i \leq I:\\
                 &\sum_{\mu_i\in\overline{\PF}(\mathcal{M}_i^{DC})}\lambda_{\mu_i}^i=1,\\
                 &\lambda_{\mu_i}^i\in\{0,1\} \text{ for all }\mu_i\in\overline{\PF}(\mathcal{M}_i^{DC}),\\
                 &\sum_{i=1}^I \sum_{\mu_i\in\overline{\PF}(\mathcal{M}_i^{DC})}-\ln\lcurl1-g_i^f(\mu_i)\rcurl\lambda_{\mu_i}^i \leq \varepsilon.
\end{align*}
This yields a scalarised problem which can be solved via any MILP solver. As in the case of the penalty values $P$, we now need to choose values of $\varepsilon$ to be optimised against. We first choose a starting value for the target probability of failure $g^f_{start}$, then convert this via the linearising transformation, so $\varepsilon_{start} = -\ln(1 - g^f_{start})$. Solving the problem with $\varepsilon = \varepsilon_{start}$ gives a solution that we call $\lambda^{(1)}$, which has second objective value $T(g^{f(1)}) \leq \varepsilon_{start}$. Using a parameter $0 < \gamma < 1$, we can set $\varepsilon = \gamma T(g^{f(1)})$ and use this value of $\varepsilon$ to ``cut off'' the previous solution, forcing the programme to find a solution with a better second objective value. Solving this programme gives a solution $\lambda^{(2)}$ with second objective value $T(g^{f(2)})$, and then the process repeats. This continues until the algorithm either finds the ``always repair'' solution or the programme becomes infeasible.
\subsection{Computational Results}
\label{paper2:app:results}
We consider problem instances derived from subsets of the solution to the 14-subsystem redundancy allocation problem considered by  \cite{fyffe1968problemset}. Each component has a long-run reliability value $p_i$, interpreted as the proportion of time that the component is healthy, assuming it is always repaired immediately. However, repair rates, degradation rates and repair costs are not included in the problem considered by the authors. For the sake of simplicity, we assume that all components have the same repair rate $\tau = 1$ and the same repair cost rate $r = 100$. Given the reliability values $p_i$ for each component, we then choose the degradation rate $\alpha_i$ such that $\tau_i/(\tau_i + \alpha_i) = p_i$, where $\tau_i/(\tau_i + \alpha_i)$ is the long-run proportion of up-time for a component that is always repaired as soon as it becomes damaged.  All LPs are solved using Gurobi 10.0.2 \citep{gurobi}, and all code is written in Julia 1.9.3. The code is run on a computer with an Intel Xeon Gold 6348 with four cores clocked at 2.6 GHz and 16 GB of RAM.
\begin{table}[tbp]
    \centering
    \begin{tabular}{ccccc}\toprule
    Subsystem & $p$ & $\tau$ & r & Copies\\
    \midrule
    1         & 0.91 & 1.0 & 100.0 & 3\\
    2         & 0.95 & 1.0 & 100.0 & 2\\
    3         & 0.92 & 1.0 & 100.0 & 3\\
    4         & 0.85 & 1.0 & 100.0 & 3\\
    5         & 0.93 & 1.0 & 100.0 & 3\\
    6         & 0.98 & 1.0 & 100.0 & 2\\
    7         & 0.91 & 1.0 & 100.0 & 2\\
    8         & 0.81 & 1.0 & 100.0 & 4\\
    9         & 0.96 & 1.0 & 100.0 & 2\\
    10        & 0.85 & 1.0 & 100.0 & 3\\
    11        & 0.94 & 1.0 & 100.0 & 2\\
    12        & 0.79 & 1.0 & 100.0 & 4\\
    13        & 0.99 & 1.0 & 100.0 & 2\\
    14        & 0.95 & 1.0 & 100.0 & 2\\
    \bottomrule
    \end{tabular}
    \caption{Subsystem designs and parameters}
    \label{tab:14subsystems}
\end{table}
We obtain subproblems by taking contiguous ranges of subsystems from these 14, and treating them as one system. For example, the problem we call 1:4 is a system comprised of subsystems 1, 2, 3 and 4. For the exact method, we use penalty values $P_n=n$ for $n = 1,2,\dots,10000$. For the heuristic method, we use penalty values $P_n = 10^{n/10}$ for $n = 1,2,\dots,n_{\max}=50$. We have found that it is only possible to solve instances exactly if there are no more than four subsystems; all 5-subsystem instances either ran into memory issues whilst building the model and were unable to begin solving, or did not find any solutions except for the ``never repair'' solution in reasonable time. Specifically, for those 5-subsystem instances that could fit in memory, no other solutions were found within 24 hours. 

In order to compare the exact (albeit incomplete) Pareto front with the approximate Pareto front found by our method, in \autoref{tab:exactVsApprox} we report summary statistics of an error measurement that we call the \emph{Relative Euclidean Pareto Optimality Gap}, or REPOG. If a vector $\bm{z^*}$ is a Pareto-optimal point in $K$-dimensional objective space and $\bm{z}$ is a point dominated by $\bm{z^*}$, then the objective-wise relative optimality gaps (ROGs) are given by the vector: $$\text{ROG}(\bm{z}|\bm{z^*})= \begin{cases}
    \left(\left|\frac{z_1-z^*_1}{z^*_1}\right|,\dots,\left|\frac{z_K-z^*_K}{z^*_K}\right|\right), & \bm{z^*} \text{ dominates }\bm{z}, \\
     \text{undefined,}& \text{otherwise.} 
\end{cases}$$
REPOG is then simply given by the Euclidean norm of ROG, so:
$$\text{REPOG}(\bm{z}|\bm{z^*}) = ||\text{ROG}((\bm{z}|\bm{z^*})||.$$ 
Note that in cases where $\bm{z}$ is equal to $\bm{z^*}$ in all but one dimension, or where the dimension of the objective space is 1, then REPOG reduces to the standard relative optimality gap. More generally, given a set of points $\mathcal{Z}^*$ all of which dominate $\bm{z}$, we can define:
$$\text{REPOG}(\bm{z}|\mathcal{Z}^*) = \min_{\bm{z^*}\in \mathcal{Z}^*}\text{REPOG}(\bm{z}|\bm{z^*}).$$
If the set $\mathcal{Z}^*$ gives the set of all points in the Pareto-front that dominate $\bm{z}$, then REPOG can be interpreted as the relative distance of a suboptimal point from the Pareto front.

Given a Pareto front PF which contains only efficient points, but which may not be complete and is finite, and an approximate finite Pareto front $\overline{\text{PF}}$, it can be useful to somehow characterize the suboptimality of the approximate front against the exact front. One possible method is to simply count the number of points in the approximate front which are dominated by the exact front. A robust and conservative measure of suboptimality is to report the maximum REPOG value amongst the points in the approximate front. We can also report the mean of the non-zero REPOG values. Together, the number of suboptimal points, the maximum REPOG, and the mean non-zero REPOG give an insight into how well the approximate front compares to the exact front. For example, a high number of suboptimal points with a low mean and maximum REPOG might suggest a consistently low optimality gap, whereas a low number of suboptimal points with a high mean and even higher maximum REPOG may suggest that the approximate front is generally close to optimality, despite having some significant weak spots. We argue that this type of reporting is more useful than the standard hypervolume metric \citep{shang2020hypervolume_survey}, which relies on the choice of a reference point, is unintuitive, and obfuscates detail.

From \autoref{tab:exactVsApprox} we observe that the exact solution times of the problem instances grow by orders of magnitude with the number of subsystems, whereas the heuristic solution times are all less than 0.5 seconds. On the other hand, the exact method consistently finds many more solutions than the heuristic. This is to be expected, as the heuristic is limited to separable policies. An intuitive explanation is that, although the exact solutions use deterministic policies, one can imagine the policies being ``separable'' but using the states of other subsystems as sources of randomness; therefore, they can ``emulate'' randomised separable policies on each subsystem by using the states of other subsystems as a source of pseudo-randomness. This could allow them to strike a finer balance between the two objectives. This may not be the only explanation, but it does provide some insight. Note that in the exact case we report the number of solutions and the number ``significant'' solutions. The latter refers to the size of a subset of the solutions where there is more than a 0.1\% difference in both objective values.

In terms of error, we note that a small handful of instances (1:2, 2:3, 7:8, 12:13, 13:14) achieved a maximum REPOG value of zero or significantly less than 0.01\%. We note that all of these instances contain only two subsystems. Problem 10:13 exhibits the largest number of suboptimal solutions, as well as the highest proportion of solutions being suboptimal (40\%). However, as suggested by the low mean REPOG values and inspection by eye in \autoref{fig:10-13}, we find that the exact and approximate Pareto fronts are quite similar, and that the extent to which the suboptimal solutions are dominated is minor. The largest maximum REPOG value is 12.4\%, given by Problem 1:3. This particular point is the orange point with an operational cost of about 45 in the respective plot in \autoref{fig:1-3}, and is strongly dominated by an exact solution. However, this particular point is clearly a poor trade-off between two other heuristic solutions on either side, and could be safely ignored by decision-makers.

For instances with five subsystems or more, the exact method becomes computationally infeasible and we are unable to provide comparisons between the heuristic method and optimal solutions. However, we can report the run-times and the numbers of solutions found. The full 14-subsystem problem was solved in 5.22 seconds, with 258 solutions in the approximate Pareto front. All other instances were solved in less time than this, as is to be expected. The full table of results can be found in \autoref{app:table}. Focusing on the full 14-subsystem example, \autoref{fig:fullSystem} shows the approximate Pareto front for the whole system. It can be seen that the 258 solutions are well-spaced over the Pareto front, and that even if the exact front has more solutions, this will not realistically benefit the decision maker in terms of flexibility. The most reliable solution has an operational cost of 372.25, and a log failure rate (LFR) of -3.472, i.e., a failure rate of 0.031. The Pareto front allows us to do some sensitivity analysis, and weigh the benefits of small sacrifices in reliability against the savings in operational costs. For example, if our target failure rate is 0.04 (LFR $\approx 3.22$), we can select a solution with a cost of 335.48 and a failure rate of 0.0398, providing a substantial saving. As another example, if we have a target failure rate of 0.05 (LFR $\approx -3$), we can choose a solution with a cost of 311.42 and a failure rate of 0.049. Of course, it is not possible to know whether or not these solutions are truly Pareto optimal, except for the two that minimise the failure rate and cost rate. However, it is ultimately up to the decision-maker to decide whether or not the trade-offs are worthwhile.

\begin{table}[tbp]
    \centering
    \small
    \begin{tabular}{p{1.1cm}p{1.4cm}p{1.5cm}p{1.6cm}p{1cm}p{1.5cm}p{1.6cm}p{1.3cm}p{1.3cm}}
    \toprule
    & \multicolumn{3}{c}{Exact} & \multicolumn{5}{c}{Heuristic}\\ \cmidrule(lr){2-4} \cmidrule(lr){5-9}
        Problem & Time & Solutions & Significant Solutions & Time & Solutions & Suboptimal Solutions & Max REPOG (\%) & Mean REPOG (\%) \\ \midrule
        01:02 & 14.74 & 28 & 21 & 0.03 & 9 & 0 & 0.00 & 0.00 \\ 
        01:03 & 365.15 & 152 & 63 & 0.11 & 24 & 3 & 12.4 & 4.87 \\ 
        01:04 & 13609.51 & 739 & 183 & 0.37 & 51 & 14 & 6.96 & 1.73 \\ 
        02:03 & 14.41 & 25 & 18 & 0.03 & 11 & 1 & $<0.01$ & $<0.01$ \\ 
        02:04 & 365.49 & 180 & 88 & 0.12 & 28 & 8 & 2.39 & 1.57 \\ 
        02:05 & 11030.04 & 727 & 173 & 0.38 & 55 & 21 & 6.9 & 1.58 \\ 
        03:04 & 26.06 & 52 & 42 & 0.05 & 18 & 3 & 6.07 & 3.41 \\ 
        03:05 & 716.84 & 237 & 116 & 0.25 & 42 & 13 & 9.65 & 3.63 \\ 
        03:06 & 13256.21 & 803 & 157 & 0.36 & 62 & 21 & 3.87 & 1.57 \\ 
        04:05 & 25.34 & 50 & 40 & 0.05 & 19 & 4 & 6.38 & 2.51 \\ 
        04:06 & 357.72 & 174 & 73 & 0.19 & 32 & 6 & 6.1 & 2.89 \\ 
        04:07 & 4515.55 & 485 & 128 & 0.21 & 42 & 13 & 4.91 & 1.53 \\ 
        05:06 & 14.67 & 25 & 17 & 0.03 & 12 & 2 & 6.57 & 3.29 \\ 
        05:07 & 162.16 & 99 & 38 & 0.08 & 22 & 5 & 6.26 & 2.84 \\ 
        05:08 & 10447.3 & 810 & 179 & 0.34 & 58 & 20 & 5.3 & 1.96 \\ 
        06:07 & 8.11 & 16 & 12 & 0.03 & 9 & 2 & 6.27 & 3.13 \\ 
        06:08 & 314.52 & 141 & 71 & 0.12 & 30 & 4 & 4.74 & 3.15 \\ 
        06:09 & 3967.18 & 430 & 102 & 0.26 & 43 & 11 & 2.53 & 1.68 \\ 
        07:08 & 22.63 & 39 & 38 & 0.05 & 14 & 2 & $<0.01$ & $<0.01$ \\ 
        07:09 & 314.79 & 141 & 75 & 0.08 & 25 & 5 & 6.51 & 2.91 \\ 
        07:10 & 11293.63 & 703 & 211 & 0.27 & 41 & 12 & 3.1 & 1.44 \\ 
        08:09 & 23.27 & 38 & 29 & 0.04 & 16 & 3 & 7.53 & 4.49 \\ 
        08:10 & 625.71 & 247 & 125 & 0.14 & 33 & 9 & 6.47 & 2.57 \\ 
        08:11 & 10894.5 & 748 & 202 & 0.33 & 49 & 17 & 4.08 & 1.84 \\ 
        09:10 & 14.38 & 29 & 22 & 0.03 & 12 & 2 & 7.04 & 3.52 \\ 
        09:11 & 160.06 & 100 & 48 & 0.08 & 21 & 3 & 2.72 & 2.48 \\ 
        09:12 & 11093.29 & 728 & 211 & 0.29 & 48 & 18 & 3.7 & 1.68 \\ 
        10:11 & 14.31 & 29 & 23 & 0.04 & 12 & 2 & 8.9 & 4.45 \\ 
        10:12 & 623.01 & 248 & 141 & 0.18 & 36 & 12 & 6.0 & 2.55 \\ 
        10:13 & 10676.22 & 739 & 178 & 0.28 & 65 & 26 & 6.24 & 1.95 \\ 
        11:12 & 22.81 & 38 & 34 & 0.04 & 16 & 3 & 7.44 & 2.48 \\ 
        11:13 & 320.6 & 132 & 52 & 0.11 & 38 & 7 & 8.25 & 5.1 \\ 
        11:14 & 3344.99 & 449 & 84 & 0.24 & 53 & 18 & 8.77 & 3.52 \\ 
        12:13 & 22.59 & 35 & 24 & 0.04 & 20 & 1 & $<0.01$ & $<0.01$ \\ 
        12:14 & 329.11 & 129 & 47 & 0.11 & 37 & 7 & 7.38 & 4.08 \\ 
        13:14 & 8.21 & 14 & 8 & 0.03 & 9 & 0 & 0.00 & 0.00 \\ \bottomrule
    \end{tabular}
    \caption{Comparison between exact and heuristic methods}
    \label{tab:exactVsApprox}
\end{table}
\begin{figure}[tbp]
    \centering
    \includegraphics[width=0.9\linewidth]{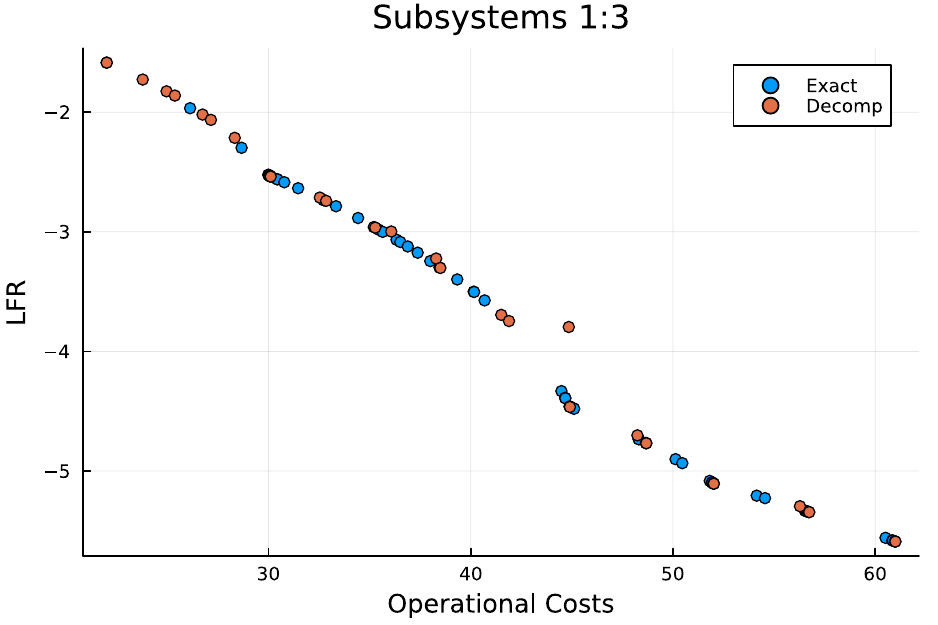}
    \caption{Exact and approximate Pareto fronts for problem 1:3}
    \label{fig:1-3}
    \vspace{1em}
    \includegraphics[width=0.9\linewidth]{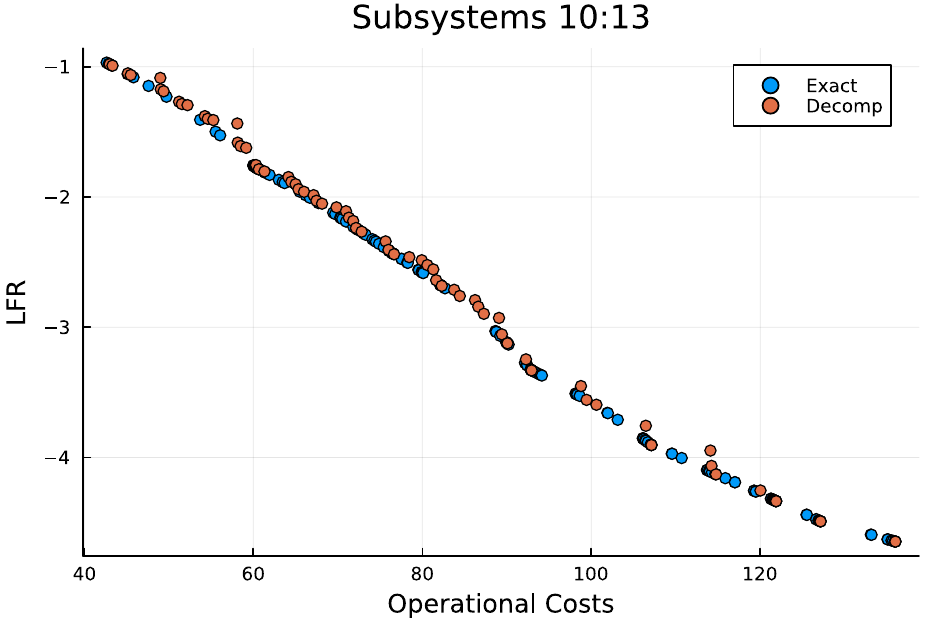}
    \caption{Exact and approximate Pareto fronts for problem 10:13}
    \label{fig:10-13}
\end{figure}
\begin{figure}[tbp]
    \centering
    \includegraphics[width=0.9\linewidth]{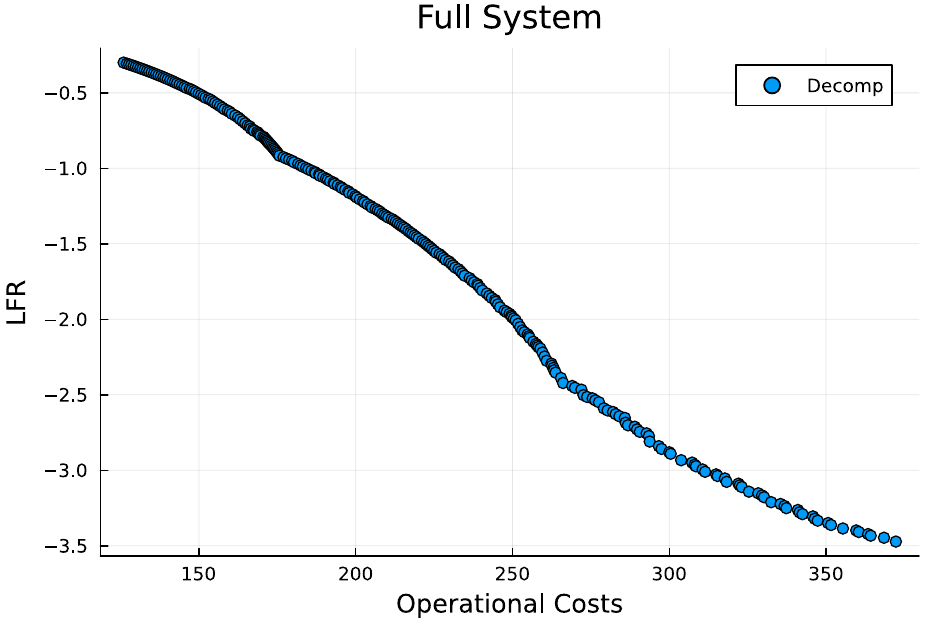}
    \caption{Approximate Pareto front for the full system}
    \label{fig:fullSystem}
\end{figure}

\section{Conclusion}
\label{paper2:conc}
This paper has proposed a simpler alternative to the standard approach for the inclusion of impulsive control in CTMDP models found in the literature, and developed theory using equivalence relations to deal with the symmetries in these models. We have also identified an intuitive class of policies - the coherent policies - which always includes an optimal policy and possesses intuitive properties. These principles have also been extended to large impulsive CTMDPs, which can be regarded as being made up of smaller impulsive CTMDPs, and we have shown how to make use of our equivalence theory to analyse and heuristically optimise these models. We have also applied our modelling framework and heuristic solution methodology to a dynamic maintenance problem, and demonstrated that the heuristic performs well in terms of near-Pareto-optimality and scales much better than exact methods.

Currently, our framework lacks some of the flexibility of pre-existing frameworks of impulsive control for CTMDPs. Specifically, we do not allow for uncertainty in the outcomes of taking impulsive actions, we do not allow for lump-sum costs for taking impulsive actions, and we only allow for one gradual action (i.e., the zero action) per state. Further work could be done to add these features to our framework. However, we argue that the simplicity of our framework, and its compatibility with out-of-the-box solution methods, makes it an attractive option for modellers. Additionally, the framework could be enhanced by considering modelling features not currently present in the impulsive control literature, such as constraints. Specifically, constraints that span the whole system could translate into linking constraints in the block-diagonal problem, necessitating an iterative column generation approach, even in the single objective case. Such a method could be applicable to weakly-coupled MDPs, for which it is common to relax instantaneous action constraints to long-run average or discounted constraints.

Further research could also consider other application areas in which it makes sense to model problems as impulsive or decomposable impulsive CTMDPs. For example, one could consider queueing control problems with many queues that can be treated independently, or inventory problems under continuous review with many products. Where our heuristic methodology is applicable, it could be tested and compared with exact optimal solutions in order to investigate the Pareto-optimality gap.  

\section*{Declarations}

\textbf{Funding and/or Conflicts of interests/Competing interests}\\[-12pt]

\noindent This paper is based on work completed at the EPSRC-funded STOR-i Centre for Doctoral Training (EP/S022252/1). The authors declare that they have no conflicts of interest or competing interests.

\bibliographystyle{apalike}
\bibliography{bib}
\appendix

\section{Proofs for \autoref{paper2:model}}
\subsection{Proof of \autoref{lemma:s-aHat}}
\label{app:lemma:s-aHat}
For any state $s$, we see that:
\begin{align*}
    \widehat{\pi}(\widehat{s,\mu(s)}) &= \sum_{(s',a')\in\widehat{(s,\mu(s))}}\pi(s',a')\\
    &= \sum_{s'\in\mathcal{S}:(s',\mu(s')\in\widehat{(s,\mu(s))}}\pi(s',\mu(s')) \text{ (at most one non-zero term per state)}\\
    &= \sum_{s'\in\mathcal{S}:(s',\mu(s')\in\widehat{(s,\mu(s))}}\pi^\mu(s')\\
    &= \widehat{\pi^\mu}(\widehat{s}).
\end{align*}
Now it's clear that the $\widehat{\pi}(\widehat{s,\mu(s)})$ alone sum to one, so $\widehat{\pi}(\widehat{s,a}) = 0$ for all state action equivalence classes $\widehat{s,a}$ that can't be written in the form $\widehat{s',\mu(s')}$ for some $s'\in\mathcal{S}$.

\subsection{Effect of Sequencing for Suboptimal Policies}
\label{app:boundExample}

\begin{proposition}
\label{prop:sequencedBoundsGetWorse}
    Let $\mu^*$ be an average-optimal policy for an impulsive CTMDP. Let $J^{\mu^*}_t$ be the finite-horizon value function under policy $\mu^*$ for state $s$ until time $t$. Assume $t>T$ for some $T$ large enough that $\mu^*$ is $t$-optimal for all $t>T$, and $T\geq t(s,a)$ for all $s,a$. Let $Q^{\mu^*}_t$ be the action value function. Let $a_1\neq0$ be an action for state $s$ satisfying $Q^{\mu^*}_t(s,a_1)= (1+\varepsilon(t))J^{\mu^*}_t(s)$ for some $\varepsilon(t)\geq0$, and let $a_2\neq0$ be an action for $s\oplus a_1$ satisfying $Q^{\mu^*}_t(s\oplus a_1,a_2) = (1+\varepsilon'(t))J^{\mu^*}_t(s\oplus a_1)$ for some $\varepsilon'(t)\geq0$. Then $Q^{\mu^*}_t(s,a_1\oplus a_2) \leq (1 + \varepsilon(t))(1+\varepsilon'(t))J^{\mu^*}_t(s)$. Additionally, there exist cases where this bound is tight. 
\end{proposition}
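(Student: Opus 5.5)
The plan is to reduce everything to the finite-horizon analogue of the lemma $Q^\mu(s,a)=Q^\mu(s\oplus a,0)$, and then chain the two multiplicative suboptimality bounds. First I would establish, exactly as in the proof of that lemma, that $Q^{\mu^*}_t(s,a)=Q^{\mu^*}_t(s\oplus a,0)$ for every $(s,a)$ and every $t$. The finite-horizon action value of $(s,a)$ is determined only by $\bm{c}(s,a)$, the generalised rates $q^+(s,a,\cdot)$ (hence the sojourn time $t^+(s,a)$), and the continuation $J^{\mu^*}_{t-\tau}$. By \eqref{impulsiveRates}--\eqref{impulsiveCosts} these coincide with the corresponding quantities for $(s\oplus a,0)$. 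The assumption $T\ge t(s,a)$ is what I would use to guarantee that this one-step decomposition of $Q_t$ is well defined and that $\mu^*$ is optimal for every residual horizon appearing in it.

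Next I would combine this identity with associativity, $s\oplus(a_1\oplus a_2)=(s\oplus a_1)\oplus a_2$. This gives
$$Q^{\mu^*}_t(s,a_1\oplus a_2)=Q^{\mu^*}_t(s\oplus a_1\oplus a_2,0)=Q^{\mu^*}_t(s\oplus a_1,a_2)=(1+\varepsilon'(t))J^{\mu^*}_t(s\oplus a_1).$$
It then remains to bound $J^{\mu^*}_t(s\oplus a_1)$ by $Q^{\mu^*}_t(s,a_1)$. Since $\mu^*$ is $t$-optimal for $t>T$, we have $J^{\mu^*}_t(s\oplus a_1)=\min_a Q^{\mu^*}_t(s\oplus a_1,a)\le Q^{\mu^*}_t(s\oplus a_1,0)$. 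Applying the lemma once more, $Q^{\mu^*}_t(s\oplus a_1,0)=Q^{\mu^*}_t(s,a_1)=(1+\varepsilon(t))J^{\mu^*}_t(s)$. Because costs are nonnegative, $J_t\ge0$ and $1+\varepsilon'(t)>0$, so multiplying the two inequalities preserves direction and yields the claimed bound $(1+\varepsilon(t))(1+\varepsilon'(t))J^{\mu^*}_t(s)$.

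For tightness I would build a small explicit example in which $\mu^*(s\oplus a_1)=0$, so that $J^{\mu^*}_t(s\oplus a_1)=Q^{\mu^*}_t(s\oplus a_1,0)$ and the only inequality becomes an equality. A concrete choice uses absorbing-type states with constant cost rates, say $s$, $s\oplus a_1$ and $s\oplus a_1\oplus a_2$ each with zero outgoing rates and cost rates $c_0<c_1<c_2$. Then $J_t$ is linear in $t$, and the ratios give $1+\varepsilon=c_1/c_0$, $1+\varepsilon'=c_2/c_1$, and $Q_t(s,a_1\oplus a_2)/J_t(s)=c_2/c_0$. To satisfy the weakly-communicating assumption, I would add small return rates or simply note that the finite-horizon computation is unaffected.

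The main obstacle is making the finite-horizon identity $Q_t(s,a)=Q_t(s\oplus a,0)$ rigorous, since the paper defines $Q$ only in the average-cost setting. I would first try defining $Q_t$ via uniformisation as a one-step Bellman expression, where the identity is immediate from \eqref{impulsiveRates}--\eqref{impulsiveCosts}. The exponential growth claim then follows by induction on the sequencing degree.
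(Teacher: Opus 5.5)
Your proof of the bound is the paper's argument: the chain $Q^{\mu^*}_t(s,a_1\oplus a_2)=Q^{\mu^*}_t(s\oplus a_1,a_2)=(1+\varepsilon'(t))J^{\mu^*}_t(s\oplus a_1)\le(1+\varepsilon'(t))Q^{\mu^*}_t(s\oplus a_1,0)=(1+\varepsilon'(t))Q^{\mu^*}_t(s,a_1)$ follows from impulsivity and $t$-optimality, and nonnegative costs are not needed since $1+\varepsilon'(t)\ge 1$. Your tightness example is a variant of the paper's three-state example with $\mu^*\equiv 0$: the paper has states $2,3$ drain into an absorbing state $1$, which is exactly what your ``return rates'' fix produces and which keeps the model weakly communicating; in both versions tightness holds because action $0$ is optimal at $s\oplus a_1$, so the chain's only inequality is an equality.
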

\begin{proof}
    We see:
\begin{align*}
    Q^{\mu^*}_t(s,a_1\oplus a_2) &= Q^{\mu^*}_t(s\oplus a_1, a_2) &\text{ (by impulsivity)}\\
    &= (1+\varepsilon'(t))J^{\mu^*}_t(s\oplus a_1) &\text{ (by $\varepsilon'$-bound)}\\
    &\leq (1+\varepsilon'(t))Q^{\mu^*}_t(s\oplus a_1,0) &\text{ (by optimality of $J^{\mu^*}_t$)}\\
    &= (1+\varepsilon'(t))Q^{\mu^*}_t(s,a_1) &\text{ (by impulsivity)} \\
    &= (1+\varepsilon(t))(1+\varepsilon'(t))J^{\mu^*}_t(s) &\text{ (by $\varepsilon$-bound)}.
\end{align*}

We can also demonstrate that this bound is tight. Consider the following impulsive CTMDP:
\begin{align*}
    &\mathcal{S} = \{1,2,3\},\\
    &\mathcal{A}(1) = \{0,1,2\}, \mathcal{A}(2) = \{0,1\}, \mathcal{A}(3) = \{0\},\\
    & q^+(1,0,\cdot) = 0,\, q^+(2,0,s') = \left\{\begin{array}{cc}
         1, & s'=1, \\
         0, & \text{otherwise,}\\
    \end{array}\right. q^+(3,0,s') = \left\{ \begin{array}{cc}
        1, & s' = 1, \\
        0, & \text{otherwise,}
    \end{array} \right.\\
    & c(1) = 1, c(2)>c(1), c(3)>c(2),\\
    & s\oplus a = s+a.
\end{align*}

In words, state 1 is absorbing under the zero action and has the lowest cost rate, states 2 and 3 deterministically transition to state 1 after an expected sojourn time of 1, state 2 is more expensive than state 1, and state 3 is more expensive than state 2. Clearly, the average-optimal policy is $\mu^*(s) = 0$ for all $s$. In fact, this policy is optimal for all finite horizons, so we can write $V^*$ instead of $V^{\mu^*}$ and $Q^*$ instead of $Q^{\mu^*}$. We may evaluate the finite-horizon value functions:
\begin{align*}
    V^*(1;t) &= Q^*(1,0;t) = t\\
    V^*(2;t) &= Q^*(2,0;t)= \begin{cases}
         c(2)t, & t\in[0,1],  \\
         c(2) + (t-1), & t>1. 
    \end{cases}\\
    V^*(3;t) &= Q^*(3,0;t) = \begin{cases}
         c(3)t, & t\in[0,1],  \\
         c(3) + (t-1), & t>1. 
    \end{cases}
\end{align*}
In state 1, we always accumulate cost at rate 1. In state 2, we accumulate cost at rate $c(2)$ until time $t=1$ and then transition to state 1 and accumulate cost at rate 1. State 3 is the same except that $c(2)$ is replaced by $c(3)$. We may also evaluate the remaining action value functions and rewrite them in terms of error functions $\varepsilon(s,a;t)$ where $Q^*(s,a;t) = (1+\varepsilon(s,a;t))V^*(s;t)$, with $\varepsilon(\cdot,\cdot;0) = 0$:
\begin{align*}
    Q^*(1,1;t) &= V^*(2;t) \text{ (by impulsivity)}\\
               &= (1+\varepsilon(1,1;t))V^*(1;t), \text{ for } \varepsilon(1,1;t) = \left\{ \begin{array}{cc}
                    c(2)-1,& t\in(0,1], \\
                    \frac{c(2)-1}{t},& t>1,
               \end{array}\right.\\
    Q^*(1,2;t) &= V^*(3;t) \text{ (by impulsivity)}\\
               &= (1+\varepsilon(1,2;t))V^*(1;t), \text{ for } \varepsilon(1,2;t) = \left\{ \begin{array}{cc}
                    c(3)-1,& t\in(0,1], \\
                    \frac{c(3)-1}{t},& t>1,
               \end{array}\right.\\
    Q^*(2,1;t) &= V^*(3;t) \text{ (by impulsivity)}\\
               &= (1 + \varepsilon(2,1;t))V^*(2;t), \text{ for } \varepsilon(2,1;t) = \left\{ \begin{array}{cc}
                    \frac{c(3)}{c(2)} - 1,& t\in(0,1], \\
                    \frac{c(3)-c(2)}{c(2)-1+t},& t>1. 
               \end{array}\right.
\end{align*}
Some rearranging then shows that $(1+\varepsilon(1,2;t)) = (1+\varepsilon(1,1;t))(1+\varepsilon(2,1;t))$, where action $2$ in state $1$ is the sequenced action of action $1$ in state $1$ and action $1$ in state $2$.
\end{proof}

\autoref{prop:sequencedBoundsGetWorse} provides the rigorous statement and proof of \autoref{claim:doNotSequenceSubOpt}. Note that due to the reliance on $Q^{\mu^*}$, this provides a best-case scenario for the extent of the suboptimality, as it assumes that an optimal policy is followed thereafter. This also extends partially to the multi-objective case via mixed-objective scalarisation. If no DS policy with $\mu(s)=a_1$ and $\mu(s\oplus a_1)=a_2$ is Pareto-optimal, then for every weighting $\bm{w}$ there exists an optimal policy $\mu^*_{\bm{w}}$ such that $a_1$ and $a_2$ are suboptimal in the way described, and therefore $a_1\oplus a_2$ has worse optimality bounds in all of these cases.

\subsection{Commutativity of Projections and Canonical Surjections}
\label{app:ithProjectionsAndCanonicalSurjections}
The following proposition describes the idea that it makes sense to speak of the ``$i\Th$ compartment'' of an impulsive equivalence class $\widehat{(\bm{s},\bm{a})}$, and that this is identical to considering the impulsive equivalence class of the $i\Th$  compartment of any $(\bm{s},\bm{a})\in\widehat{(\bm{s},\bm{a})}$. Mathematically, we can say that assignment to equivalence classes and taking $i\Th$ compartments commute. First, let us recall all necessary notation, and define some additional notation. Consider a stochastically decomposable impulsive CTMDP with feasible state-action space $\mathcal{SA}$, state-action impulsive equivalence relation $\sim_{\oplus}$ and compartment state-action impulsive equivalence relations $\sim_{\oplus_i}$. Let $\widehat{\mathcal{SA}}$ be the partition of $\mathcal{SA}$ induced by $\sim_{\oplus}$, $\mathcal{SA}_i$ be the $i\Th$ compartment state-action space for each $i$, and $\widehat{\mathcal{SA}_i}$ be the partitions of the $\mathcal{SA}_i$ induced by the respective relations $\sim_{\oplus_i}$. Let $\varphi:\mathcal{SA}\to\widehat{\mathcal{SA}}$ be the canonical surjection of $\sim_{\oplus}$, i.e. the function that maps each state-action pair $(\bm{s},\bm{a})$ to its equivalence class, so $\varphi(\bm{s},\bm{a}) = \widehat{(\bm{s},\bm{a})}$. Likewise, let $\varphi_i:\mathcal{SA}_i\to\widehat{\mathcal{SA}_i}$ be the canonical surjections of the $\sim_{\oplus_i}$. Let $\chi_i:\mathcal{SA}\to\mathcal{SA}_i$ be the $i\Th$ projection map of $\mathcal{SA}$ onto $\mathcal{SA}_i$, i.e. $\chi_i(\bm{s},\bm{a})=([\bm{s}]_i,[\bm{a}]_i)$, the function that maps each state-action pair to its $i\Th$ compartment.
\begin{proposition}
     For each $i$, there exists a projection $\psi_i:\widehat{\mathcal{SA}}\to\widehat{\mathcal{SA}_i}$ such that for every $\widehat{(\bm{s},\bm{a})}\in\widehat{\mathcal{SA}}$ and $(\bm{s},\bm{a})\in\widehat{(\bm{s},\bm{a})}$, $\varphi_i([\bm{s}]_i,[\bm{a}]_i) = \psi_i(\widehat{(\bm{s},\bm{a})})$.
\end{proposition}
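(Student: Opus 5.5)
The plan is to define $\psi_i$ directly on classes and then check that the definition does not depend on the chosen representative. Given a class $C\in\widehat{\mathcal{SA}}$, pick any $(\bm{s},\bm{a})\in C$ and set
$$\psi_i(C) := \varphi_i(\chi_i(\bm{s},\bm{a})) = \varphi_i([\bm{s}]_i,[\bm{a}]_i).$$
If this is well-defined, the required identity $\varphi_i([\bm{s}]_i,[\bm{a}]_i)=\psi_i(\widehat{(\bm{s},\bm{a})})$ holds for every member of every class by construction. In other words, $\varphi_i\circ\chi_i$ factors through $\varphi$, which is the standard universal property of a quotient map.

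The key step is to show that $\varphi_i\circ\chi_i$ is constant on each $\sim_\oplus$ class. Take $(\bm{s}_1,\bm{a}_1)\sim_\oplus(\bm{s}_2,\bm{a}_2)$. By \autoref{lemma:state-action-equiv}, in the direction ``$\sim_\oplus$ implies componentwise $\sim_{\oplus_i}$'', we get $([\bm{s}_1]_i,[\bm{a}_1]_i)\sim_{\oplus_i}([\bm{s}_2]_i,[\bm{a}_2]_i)$ for every $i$. This can also be seen directly: $\bm{s}_1\oplus\bm{a}_1=\bm{s}_2\oplus\bm{a}_2$, and by the componentwise definition of $\oplus$ in the cost-free direct product, $[\bm{s}_1]_i\oplus_i[\bm{a}_1]_i=[\bm{s}_2]_i\oplus_i[\bm{a}_2]_i$. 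Applying $\varphi_i$ then gives equal images, so $\psi_i$ is well-defined.

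It remains to justify the word ``projection''. I will show that $\psi_i$ is surjective by lifting. Given any $(v,b)\in\mathcal{SA}_i$, extend it to a feasible whole pair $(\bm{s},\bm{a})$ by choosing arbitrary states in the other compartments with action $0$ there. This pair lies in $\mathcal{SA}$ because $\mathcal{A}(\bm{s})=\bigtimes_j\mathcal{A}_j([\bm{s}]_j)$ and $0\in\mathcal{A}_j(\cdot)$. Then $\psi_i(\widehat{(\bm{s},\bm{a})})=\varphi_i(v,b)$.

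I do not expect a serious obstacle. The only point needing care is well-definedness, which rests entirely on the componentwise form of $\oplus$ (equivalently, on \autoref{lemma:state-action-equiv}). The same argument applies verbatim to the state-level relations $\sim_{\oplus\mu}$ and $\sim_{\oplus\mu_i}$ for separable $\mu$, via \autoref{coro:state-equiv}; this is the form used in \autoref{coro:stationaryIndep}.
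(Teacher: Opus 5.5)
Your proposal is correct and follows the same route as the paper: well-definedness of $\psi_i$ comes from \autoref{lemma:state-action-equiv} (equivalently, from the componentwise definition of $\oplus$), so $\varphi_i\circ\chi_i$ is constant on each $\sim_\oplus$ class and therefore factors through $\varphi$. Your extra surjectivity argument, which lifts $(v,b)$ by padding the other compartments with arbitrary states and zero actions, is a valid addition that the paper does not prove, since the paper uses ``projection'' only to mean the induced well-defined map.
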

\begin{proof}
    Let $\widehat{(\bm{s},\bm{a})}\in\widehat{\mathcal{SA}}$, and choose any $(\bm{s}_1,\bm{a}_1),(\bm{s}_2,\bm{a}_2)\in\widehat{(\bm{s},\bm{a})}$. By \autoref{lemma:state-action-equiv}, we have $[\bm{s}_1,\bm{a}_1]_i\sim_{\oplus_i}[\bm{s}_2,\bm{a}_2]_i$ for every $i$, so $\varphi_i([\bm{s}_1,\bm{a}_1]_i) = \varphi_i([\bm{s}_2,\bm{a}_2]_i)$ for every $i$. Therefore any $\widehat{(\bm{s},\bm{a})}\in\widehat{\mathcal{SA}}$ can be mapped to exactly one member of $\widehat{\mathcal{SA}_i}$ for every $i$, and we can denote this by $\psi_i(\widehat{(\bm{s},\bm{a})})$.
\end{proof}
Diagrammatically, we say that the following diagram commutes for all $i$:
\[\begin{tikzcd}[row sep=large,column sep=large]
\mathcal{SA} \arrow{r}{\varphi} \arrow[swap]{d}{\chi_i} & \widehat{\mathcal{SA}} \arrow{d}{\psi_i} \\
\mathcal{SA}_i \arrow{r}{\varphi_i} & \widehat{\mathcal{SA}_i}
\end{tikzcd}\]
The following corollary is equivalent to the above, but for a fixed separable policy $\mu$. Again, we first recall all necessary notation. Consider the whole-state $\mu$-adapted impulsive equivalence relation $\sim_{\oplus\mu}$ and compartment state $\mu$-adapted impulsive equivalence relations $\sim_{\oplus\mu_i}$. Let $\widehat{\mathcal{S}}$ be the partition of $\mathcal{S}$ induced by $\sim_{\oplus\mu}$, and $\widehat{\mathcal{S}_i}$ be the partitions of the $\mathcal{S}_i$ induced by the relations $\sim_{\oplus\mu_i}$. Let $\varphi^\mu:\mathcal{S}\to\widehat{\mathcal{S}}$ be the canonical surjection of $\sim_{\oplus\mu}$, $\varphi(\bm{s}) = \widehat{\bm{s}}$. Likewise, let $\varphi^\mu_i:\mathcal{S}_i\to\widehat{\mathcal{S}_i}$ be the canonical surjections of the $\sim_{\oplus\mu_i}$. Let $\chi^\mu_i:\mathcal{S}\to\mathcal{S}_i$ be the $i$th projection map of $\mathcal{S}$ onto $\mathcal{S}_i$, i.e. $\chi^\mu_i(\bm{s})=[\bm{s}]_i$.
\begin{corollary}
     For each $i$, there exists a projection $\psi^\mu_i:\widehat{\mathcal{S}}\to\widehat{\mathcal{S}_i}$ such that for every $\widehat{\bm{s}}\in\widehat{\mathcal{S}}$ and $\bm{s}\in\widehat{\bm{s}}$, $\varphi^\mu_i([\bm{s}]_i) = \psi^\mu_i(\widehat{\bm{s}})$.
\end{corollary}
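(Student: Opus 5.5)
The plan is to mirror the proof of the preceding proposition, replacing \autoref{lemma:state-action-equiv} by its state-level counterpart \autoref{coro:state-equiv}. The goal is to show that the assignment $\widehat{\bm{s}}\mapsto\varphi^\mu_i([\bm{s}]_i)$ does not depend on the representative $\bm{s}\in\widehat{\bm{s}}$. Once this is established, $\psi^\mu_i$ can simply be defined by this assignment, and the required identity $\varphi^\mu_i([\bm{s}]_i)=\psi^\mu_i(\widehat{\bm{s}})$ holds by construction.

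Concretely, I will fix $\widehat{\bm{s}}\in\widehat{\mathcal{S}}$ and two elements $\bm{s}_1,\bm{s}_2\in\widehat{\bm{s}}$, so that $\bm{s}_1\sim_{\oplus\mu}\bm{s}_2$. Because $\mu$ is separable, the relations $\sim_{\oplus\mu_i}$ are well defined, and the ``only if'' direction of \autoref{coro:state-equiv} gives $[\bm{s}_1]_i\sim_{\oplus\mu_i}[\bm{s}_2]_i$ for every $i$. Hence $\varphi^\mu_i([\bm{s}_1]_i)=\varphi^\mu_i([\bm{s}_2]_i)$, since canonical surjections are constant exactly on equivalence classes. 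Defining $\psi^\mu_i(\widehat{\bm{s}}):=\varphi^\mu_i([\bm{s}]_i)$ for any $\bm{s}\in\widehat{\bm{s}}$ therefore gives a well-defined map $\widehat{\mathcal{S}}\to\widehat{\mathcal{S}_i}$ with $\psi^\mu_i\circ\varphi^\mu=\varphi^\mu_i\circ\chi^\mu_i$, which is the state-level analogue of the commuting diagram.

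There is an alternative route that avoids invoking the corollary directly. Since $\bm{s}\sim_{\oplus\mu}\bm{s}'$ means $(\bm{s},\mu(\bm{s}))\sim_\oplus(\bm{s}',\mu(\bm{s}'))$, the relation $\sim_{\oplus\mu}$ is the $\mu$-adapted version of $\sim_\oplus$. The state-level statement then follows by applying the preceding proposition to the pairs $(\bm{s},\mu(\bm{s}))$, using $[\mu(\bm{s})]_i=\mu_i([\bm{s}]_i)$. Either route is short.

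The only point requiring care, and what I expect to be the main (mild) obstacle, is making sure separability is used correctly. The $i$th component of the action $\mu(\bm{s})$ must depend only on $[\bm{s}]_i$, so that $\sim_{\oplus\mu_i}$ is a relation on $\mathcal{S}_i$ alone and the projection is consistent. This is exactly what \autoref{coro:state-equiv} encodes, so I would cite it rather than re-derive it. Surjectivity of $\psi^\mu_i$ is not required by the statement, but it follows immediately from surjectivity of $\varphi^\mu_i$ and $\chi^\mu_i$ if one wishes to note it.
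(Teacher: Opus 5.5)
Your proposal is correct and is essentially the paper's proof. Like the paper, you fix two representatives of $\widehat{\bm{s}}$, use \autoref{coro:state-equiv} to obtain $[\bm{s}_1]_i\sim_{\oplus\mu_i}[\bm{s}_2]_i$, conclude that they have equal images under $\varphi^\mu_i$, and define $\psi^\mu_i$ via any representative. (Minor point: the implication you need, $\bm{s}_1\sim_{\oplus\mu}\bm{s}_2\Rightarrow[\bm{s}_1]_i\sim_{\oplus\mu_i}[\bm{s}_2]_i$, is the ``if'' rather than the ``only if'' direction of that corollary as stated, though this does not affect the argument.)
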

\begin{proof}
    Let $\widehat{s}\in\widehat{\mathcal{S}}$, and choose any $\bm{s}_1,\bm{s}_2\in\widehat{\bm{s}}$. By \autoref{coro:state-equiv}, we have $[\bm{s}_1]_i\sim_{\oplus\mu_i}[\bm{s}_2]_i$ for every $i$, so $\varphi^\mu_i([\bm{s}_1]_i) = \varphi^\mu_i([\bm{s}_2]_i)$ for every $i$. Therefore any $\widehat{\bm{s}}\in\widehat{\mathcal{S}}$ can be mapped to exactly one member of $\widehat{\mathcal{S}_i}$ for every $i$, and we can denote this by $\psi^\mu_i(\widehat{\bm{s}})$.
\end{proof}
Diagrammatically, we say that the following diagram commutes for all $i$:
\[\begin{tikzcd}[sep=large]
\mathcal{S} \arrow{r}{\varphi^\mu} \arrow[swap]{d}{\chi^\mu_i} & \widehat{\mathcal{S}} \arrow{d}{\psi^\mu_i} \\
\mathcal{S}_i \arrow{r}{\varphi^\mu_i} & \widehat{\mathcal{S}_i}
\end{tikzcd}\]

\subsection{Proof of \autoref{prop:decompositionWorks}}
\label{app:decompositionWorks}
For any $i$, we want to show that:
    $$\PP(\widehat{S^\mu_{i,k+1}} = \widehat{v}|S^\mu_{k} = \bm{s}) = \PP(\widehat{S^\mu_{i,k+1}} = \widehat{v}|\widehat{S^\mu_{i,k}} = \widehat{[s]_i}),$$
    where $v\in\mathcal{S}_i$. To begin, we see that:
    \begin{align*}
        \PP(S_{i,k+1}^\mu=v|S_{k}^\mu = \bm{s}) &= \PP(S_{k+1}^\mu \in \{\bm{s}'\in\mathcal{S}:[\bm{s}']_i =v\}|S_k^\mu = \bm{s})\\
        &= \sum_{\bm{s}'\in\mathcal{S}:[\bm{s}']_i=v}\PP(S^\mu_{k+1} = \bm{s}'|S_{k}^\mu = \bm{s}).
    \end{align*}
    There are three main cases to consider: (i) the probability of nothing happening, in which case $v = [\bm{s}]_i$; (ii) the probability of something happening in another sub-state, in which case nothing happens to the $i$th sub-state except being updated to reflect the impulsive action, so $v = [\bm{s}\oplus\mu(\bm{s})]_i$, and (iii) the probability that an event happens in compartment $i$. We need only focus on the latter case. We recall that for any state $\bm{s}$, if there is an event in compartment $i$ then all other compartments $j\neq i$ deterministically update, so that $[\bm{s}']_j = [\bm{s}\oplus\mu(\bm{s})]_j$. Hence, for any $v \notin \{[\bm{s}]_i,[\bm{s}\oplus\mu(\bm{s})]_i\}$ we can write:
    \begin{align*}
        \PP(S_{i,k+1}^\mu=v|S_{k}^\mu = \bm{s}) &= \sum_{\bm{s}'\in\mathcal{S}:[\bm{s}']_i=v}\PP(S^\mu_{k+1} = \bm{s}'|S_{k}^\mu = \bm{s})\\
        &= \PP(S_{k+1}^\mu = \bm{s}'' | S_k^\mu = \bm{s})\\
        &= q(\bm{s},\mu(\bm{s}),\bm{s}'')\Delta\\
        &= q_{\mu_i}([\bm{s}]_i,v)\Delta, \text{ (assuming a separable policy),}
    \end{align*}
    where $\bm{s}'' = \left([\bm{s}]_1\oplus_1\mu_1([\bm{s}]_1),\dots,v,\dots,[\bm{s}]_I\oplus_I\mu_I([\bm{s}]_I)\right)$.
    
    We now wish to consider $\widehat{S_{i,k+1}^\mu}$. Choose some equivalence class $\widehat{v} \neq \widehat{[\bm{s}]_i}$, and recognise that by coherence we have that $[\bm{s}]_i\sim_{\oplus\mu_i} [\bm{s}\oplus\mu(\bm{s})]_i$, so $[\bm{s}]_i\notin\widehat{v}$ and  $[\bm{s}\oplus\mu(\bm{s})]_i\notin \widehat{v}$. Then we can see:
    \begin{align*}
        \PP(\widehat{S_{i,k+1}^\mu} = \widehat{v}|S_{k}^\mu = \bm{s}) &= \PP(S_{i,k+1}^\mu \in \widehat{v}|S_{k}^\mu = \bm{s})\\
        &= \sum_{v'\in\widehat{v}}\PP(S_{i,k+1}^\mu = v'|S_{k}^\mu = s)\\
        &= \sum_{v'\in\widehat{v}} q_{\mu_i}([\bm{s}]_i,v')\Delta\\
        &=\sum_{v'\in\widehat{v}} q_{\mu_i}(\widehat{[\bm{s}]_i},v')\Delta\\
        &= q_{\mu_i}(\widehat{[s]_i}, \widehat{v})\Delta.
    \end{align*}
    We note that this expression depends only on the sub-state equivalence classes, so we can therefore write:
    $$\PP(\widehat{S_{i,k+1}^\mu} = \widehat{v}'| \widehat{S_{i,k}^\mu} = \widehat{v}) = q_{\mu_i}(\widehat{v}, \widehat{v}')\Delta,$$
    for any $\widehat{v}'\neq \widehat{v}.$ We can then see that:
    \begin{align*}
        \PP(\widehat{S_{i,k+1}^\mu} = \widehat{v}| \widehat{S_{i,k}^\mu} = \widehat{v}) &= 1 - \PP(\widehat{S_{i,k+1}^\mu} \neq \widehat{v}| \widehat{S_{i,k}^\mu} = \widehat{v})\\
        &= 1 - \sum_{\widehat{v'}\neq \widehat{v}} \PP(\widehat{S_{i,k+1}^\mu} = \widehat{v}'| \widehat{S_{i,k}^\mu} = \widehat{v})\\
        &= 1 - \sum_{\widehat{v}'\neq \widehat{v}} q_{\mu_i}(\widehat{v},\widehat{v}') \Delta\\
        &= 1 + q_{\mu_i}(\widehat{v},\widehat{v})\Delta.
    \end{align*}
    Therefore $\PP(\widehat{S_{i,k+1}^\mu} = \widehat{v}|{S_{k}^\mu} = \bm{s})$ depends only on the respective compartment equivalence class of $\bm{s}$, so the stochastic processes $\lcurl\widehat{S_{i,k}^\mu}\rcurl_{k=1}^\infty$ for each $i$ are independent Markov chains.
    
\subsection{Proof of \autoref{prop:addMultOR_are_decompable}}
\label{app:addMultLogicalORAreDecomp}
Let $\bm{g}:[0,1]^{|\mathcal{SA}|}\to\RR^K, \bm{g}(\bm{\pi}) = \sum_{\bm{s}\in\mathcal{S}}\bm{c}(\bm{s}\oplus \bm{a})\pi(\bm{s},\bm{a})$ be the function mapping every solution of OP to its long-run average cost. First, assume $c_k$ is an additive cost function. We see:
    \begin{align*}
        g_k(\bm{\pi}) &= \sum_{(\bm{s},\bm{a})\in\mathcal{SA}}c_k(\bm{s}\oplus \bm{a})\pi(\bm{s},\bm{a})\\
        &= \sum_{(\bm{s},\bm{a})\in\mathcal{SA}}\left(\sum_{i=1}^I c_{i,k}([\bm{s}\oplus \bm{a}]_i)\right)\pi(\bm{s},\bm{a})\\
        &= \sum_{i=1}^I\sum_{(\bm{s},\bm{a})\in\mathcal{SA}}c_{i,k}([\bm{s}\oplus \bm{a}]_i)\pi(\bm{s},\bm{a})\\
        &=\sum_{i=1}^I \sum_{(s_i,a_i)\in\mathcal{SA}_i}\sum_{(\bm{s},\bm{a})\in\mathcal{SA}:[\bm{s},\bm{a}]_i=(s_i,a_i)}c_{i,k}(s_i\oplus a_i)\pi(\bm{s},\bm{a})\\
        &=\sum_{i=1}^I\sum_{(s_i,a_i)\in\mathcal{SA}_i}c_{i,k}(s_i\oplus a_i)\pi_i(s_i,a_i).
    \end{align*}
    Therefore $c_k$ is decomposable with $f_k:\RR^I\to\RR$, $f_k(\bm x) = \sum_{i=1}^I x_i$. Next, assume $c_k$ is a multiplicative cost function. We see:
    \begin{align*}
        g_k(\pi) &= \sum_{(\bm{s},\bm{a})\in\mathcal{SA}}c_k(\bm{s}\oplus \bm{a})\pi(\bm{s},\bm{a})\\
        &= \sum_{\widehat{\bm{s},\bm{a}}\in\widehat{\mathcal{SA}}}c_k(\widehat{\bm{s},\bm{a}})\widehat{\pi}(\widehat{\bm{s},\bm{a}})\\
        &=\sum_{\widehat{\bm{s},\bm{a}}\in\widehat{\mathcal{SA}}}\left(\prod_{i=1}^Ic_{i,k}([\widehat{\bm{s},\bm{a}}]_i)\right)\left(\prod_{i=1}^I \widehat{\pi}_i([\widehat{\bm{s},\bm{a}}]_i)\right)\quad \\&\quad\quad\text{(by independence of sub-processes under separable $\mu$)}\\
        &= \sum_{\widehat{\bm{s},\bm{a}}\in\widehat{\mathcal{SA}}}\prod_{i=1}^Ic_{i,k}([\widehat{\bm{s},\bm{a}}]_i) \widehat{\pi}_i([\widehat{\bm{s},\bm{a}}]_i)\\
        &= \prod_{i=1}^I \sum_{\widehat{s_i,a_i}\in\widehat{SA}_i}c_{i,k}(\widehat{s_i,a_i})\pi_i(\widehat{s_i,a_i})\\
        &=\prod_{i=1}^I\sum_{s_i,a_i\in\mathcal{SA}_i}c_{i,k}(s_i,a_i)\pi_i(s_i,a_i).
    \end{align*}
    Therefore $c_k$ is decomposable with $f_k:\RR^I\to\RR$, $f_k(\bm x) = \prod_{i=1}^I x_i$.  Finally, assume $c_k$ is a logical-OR cost function. We first see that:
    \begin{align*}
        c_k(\bm{s}) &= \II\lcurl\bigvee_{i=1}^I ([\bm{s}]_i\in E_i)\rcurl\\
        &= \II\lcurl\neg\neg\bigvee_{i=1}^I ([\bm{s}]_i\in E_i)\rcurl\\
        &= 1 - \II\lcurl\bigwedge_{i=1}^I ([\bm{s}]_i\notin E_i)\rcurl\\
        &= 1 - \prod_{i=1}^I\II\lcurl[\bm{s}]_i\notin E_i\rcurl.
    \end{align*}
    We then see that:
    \begin{align*}
        g_k(\bm{\pi}) &= \sum_{(\bm{s},\bm{a})\in\mathcal{SA}}c_k(\bm{s}\oplus \bm{a})\pi(\bm{s},\bm{a})\\
        &= \sum_{(\bm{s},\bm{a})\in\mathcal{SA}}\left(1 - \prod_{i=1}^I\II\lcurl[\bm{s}]_i\notin E_i\rcurl\right)\pi(\bm{s},\bm{a})\\
        &= 1 - \sum_{(\bm{s},\bm{a})\in\mathcal{SA}}\left(\prod_{i=1}^I\II\lcurl[\bm{s}]_i\notin E_i\rcurl\right)\pi(\bm{s},\bm{a}).
    \end{align*}
    We then treat the sum along similar lines to the multiplicative cost function to find that:
    \begin{align*}
        g_k(\pi) &= 1 - \prod_{i=1}^I\sum_{s_i,a_i\in\mathcal{SA}_i}\II\lcurl s_i\notin E_i \rcurl\pi_i(s_i,a_i)\\
        &= 1 - \prod_{i=1}^I\left(1 - \sum_{s_i,a_i\in\mathcal{SA}_i}\II\lcurl s_i\in E_i\rcurl\pi_i(s_i,a_i)\right),
    \end{align*}
    so $c_k$ is decomposable with $c_{i,k}(s_i) = \II\lcurl s_i\in E_i\rcurl$ and $f_k:[0,1]^I\to[0,1]$, $f(\bm{x}) = 1 - \prod_{i=1}^I (1 - x_i)$. 
    
\section{Proof of \autoref{thm:CS-MILP_and_DC-MIP_equiv}}
\label{app:CS-MILP_and_DC-MIP}
We first show that it is possible to move from CS-MILP solutions to DC-MIP solutions in such a way that equations \eqref{eq-whole-to-compartment-decision-vars} and \eqref{eq-compartment-to-whole-decision-vars} are satisfied, where equations \eqref{eq-whole-to-compartment-decision-vars} and \eqref{eq-compartment-to-whole-decision-vars} are the decision variable equivalent of \autoref{coro:stationaryIndep}.
\begin{proposition}
\label{prop:CS-MILP_to_DC-MIP}
    Let $\mathfrak{C}$ be a decomposable impulsive CTMDP, and choose a decomposition into compartments $\{\mathfrak{C}_i\}_{i=1}^I$. Let $(\bm{\pi},\bm{y})$ be a solution to CS-MILP for $\mathfrak{C}$. Then there exists a solution $\{\bm{\pi}_i,\bm{y}_i\}_{i=1}^I$ for DC-MIP with compartments $\{\mathfrak{C}_i\}_{i=1}^I$ satisfying the equations \eqref{eq-whole-to-compartment-decision-vars} and \eqref{eq-compartment-to-whole-decision-vars}.
\end{proposition}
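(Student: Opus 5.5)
The plan is to read a coherent separable policy off the CS-MILP solution, take its compartment sub-policies, and use their stationary state-action frequencies as the DC-MIP solution; equations \eqref{eq-whole-to-compartment-decision-vars} and \eqref{eq-compartment-to-whole-decision-vars} then follow from \autoref{coro:stationaryIndep} and \autoref{lemma:s-aHat}.

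\textbf{Step 1: extract the policy.} Given a feasible $(\bm{\pi},\bm{y})$, define $\mu(\bm{s})$ on every state $\bm{s}$ with $\sum_{\bm{a}}\pi(\bm{s},\bm{a})>0$ as the unique action with $\pi(\bm{s},\bm{a})>0$. Uniqueness follows from \eqref{dcimilp-cons-pi-y} and \eqref{dcimilp-cons-det}: if $\pi(\bm{s},\bm{a})>0$ then $y(\bm{s},\bm{a})=1$, which forces $\pi(\bm{s},\bm{a}')=0$ for all $\bm{a}'\neq\bm{a}$. The separability constraint \eqref{dcimilp-cons-indep} then shows that the $i$th component $[\mu(\bm{s})]_i$ depends only on $[\bm{s}]_i$ among recurrent states, so sub-policies $\mu_i$ are well defined on every sub-state $v$ that appears as $[\bm{s}]_i$ for some recurrent $\bm{s}$. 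The coherence constraint \eqref{dcimilp-cons-coherent} gives $\mu(\bm{s}\oplus\mu(\bm{s}))=\bm{0}$ whenever $\bm{s}\oplus\mu(\bm{s})$ is recurrent.

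\textbf{Step 2: extend to transient states.} This is the delicate step and the main obstacle. The constraints carry no information where $\pi=0$, so $\mu_i$ must be extended to the remaining sub-states while keeping it coherent, and it must be checked that $\bm{\pi}$ is still the stationary distribution of the resulting separable $\mu$. I would set $\mu_i(v)=0$ on every unassigned sub-state; a zero action can never break coherence. I must also check the case where $\bm{s}$ is recurrent but the post-decision state $\bm{s}\oplus\mu(\bm{s})$ is transient. Here the extension already gives zero actions at all affected sub-states, and \autoref{lemma:coherentIffSubCoherent} then yields coherence of $\mu$ from that of each $\mu_i$. Since changes on null states do not affect the balance equations \eqref{op-cons-balance}, $\bm{\pi}$ remains stationary for $\mu$. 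If the induced chain is not unichain, I would handle it by applying the argument per recurrent class, or by appealing to the paper's standing unichain assumption.

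\textbf{Step 3: build the DC-MIP solution and verify.} Let $\pi_i$ be the OP solution of $\mathfrak{C}_i$ under $\mu_i$, and set $y_i(v,b)=\II\{b=\mu_i(v)\}$. Then constraints \eqref{ddcmilp-cons-balance} through \eqref{ddcmilp-cons-bin} hold by construction, using coherence of $\mu_i$ for \eqref{ddcmilp-cons-coherent}. By \autoref{coro:stationaryIndep} and \autoref{coro:unifToCont}, the aggregated frequencies satisfy the product form \eqref{eq-whole-to-compartment-decision-vars} and the marginal form \eqref{eq-compartment-to-whole-decision-vars}, and \autoref{lemma:s-aHat} converts the state statements into state-action statements. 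Finally, because costs $\bm{c}(\bm{s}\oplus\bm{a})$ are constant on $\sim_\oplus$ classes, the definition of a decomposable cost function gives equal objective values.
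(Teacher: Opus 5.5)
Your route is the paper's own: read off $\mu$ from $\bm{\pi}$, split it into the $\mu_i$, take each compartment's stationary frequencies under $\mu_i$, and let the $y_i$ be indicators. You are right that Step~2 is the crux, but your resolution of it is false, and the gap cannot be closed without changing the statement.

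You claim that the sub-states $[\bm{s}\oplus\mu(\bm{s})]_i$ are unassigned and therefore receive action $0$. In a product chain they are typically assigned. From a recurrent $\bm{s}$, an event in any compartment $j\neq i$ moves the chain to a recurrent state whose $i$th component is exactly $[\bm{s}]_i\oplus_i\mu_i([\bm{s}]_i)$. So $\mu_i$ is already pinned down there by $\bm{\pi}$. Constraint \eqref{dcimilp-cons-coherent} only restricts the single full state $\bm{s}\oplus\mu(\bm{s})$, which may be transient, so nothing forces that pinned-down sub-action to be $0$. The $\mu_i$ therefore need not be coherent. Then \autoref{lemma:coherentIffSubCoherent} cannot be invoked, and neither can \autoref{coro:unifToCont} and \autoref{coro:stationaryIndep}, on which your Step~3 rests.

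\emph{Counterexample.} Let $\mathfrak{C}_1$ have states $p_0,p_1,p_2$ with $\mathcal{A}_1(p_0)=\{0,1,2\}$, $\mathcal{A}_1(p_1)=\{0,1\}$, $\mathcal{A}_1(p_2)=\{0\}$. Set $p_0\oplus_1 1=p_1$ and $p_0\oplus_1 2=p_1\oplus_1 1=p_2$, with events $p_1\to p_0$ and $p_2\to p_0$ at rate $1$. Let $\mathfrak{C}_2$ have states $r,t$ with $\mathcal{A}_2(r)=\{0,1\}$, $\mathcal{A}_2(t)=\{0\}$, $r\oplus_2 1=t$, and event $t\to r$ at rate $1$. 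Take $\mu_1=(p_0\mapsto 1,\,p_1\mapsto 1,\,p_2\mapsto 0)$ and $\mu_2=(r\mapsto 1,\,t\mapsto 0)$.

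\begin{itemize}
\item No event ever lands in $p_1$, $p_2$ or $t$, so the product chain lives on $(p_0,t),(p_1,r),(p_2,r)$.
\item Its transitions are $(p_0,t)\to(p_1,r)$, $(p_1,r)\to(p_0,t)$, $(p_1,r)\to(p_2,r)$ and $(p_2,r)\to(p_0,t)$, each at rate $1$, giving stationary masses $\tfrac12,\tfrac14,\tfrac14$.
\item The post-decision states $(p_1,t)$ and $(p_2,t)$ carry no mass. So with $y=1$ on the three used pairs, every CS-MILP constraint holds.
\item Yet $\mu_1$ is forced on all of $p_0,p_1,p_2$, and it is not coherent.
\end{itemize}

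The proposition itself fails here. The compartment-1 marginal of $\bm{\pi}$ gives mass $\tfrac12$ to each of the $\sim_{\oplus_1}$-classes with post-decision states $p_1$ and $p_2$. But in $\mathfrak{C}_1$ alone, $p_1$ and $p_2$ have no inflow and positive outflow under every action. So every DC-MIP solution has $\pi_1=\delta_{(p_0,b)}$ for a single $b$, and \eqref{eq-compartment-to-whole-decision-vars} cannot hold. Your $\pi_1=\delta_{(p_0,1)}$ is in fact DC-MIP-feasible, but it has the wrong marginal.

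The paper's proof makes the same unjustified step: ``by the separable and coherent constraints, $\mu$ must be a separable and coherent policy''. The repair therefore has to come from the hypotheses, in one of two ways:
\begin{itemize}
\item replace \eqref{dcimilp-cons-coherent} by compartment-wise constraints $\pi(\bm{s}',\bm{a}')\le 1-y(\bm{s},\bm{a})$ whenever $[\bm{s}']_i=[\bm{s}]_i\oplus_i[\bm{a}]_i$ and $[\bm{a}']_i\neq 0$; or
\item assume that $\bm{s}\oplus\mu(\bm{s})$ is recurrent whenever $\bm{s}$ is, as happens for example when all self-transition rates are positive.
\end{itemize}
Under either, your Step~2 goes through. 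Step~3 then still needs the unichain assumption rather than your per-class alternative, because a mixture of product measures is generally not a product.
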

\begin{proof}
$\pi(\bm{s},\bm{a})$ is a basic feasible solution of OP for $\mathfrak{C}$, and corresponds to a deterministic policy $\mu(\bm{s}) = \arg\max_{\bm{a}\in\mathcal{A}(\bm{s})}\pi(\bm{s},\bm{a})$. Moreover, by the separable and coherent constraints, $\mu$ must be a separable and coherent policy. Therefore, $\mu$ can be written as $(\mu_1,\dots,\mu_I)$ where the $\mu_i$ are coherent policies for the compartments $\mathfrak{C}_i$. Now, let $V^{\mu_i}$ be the stationary distribution of the CTMC defined by compartment $\mathfrak{C}_i$ as controlled by policy $\mu_i$, for each $i$. Then: $$\pi_i(v,b) := \begin{cases}
     \PP(V^{\mu_i} = v),& b = \mu_i(v)  \\
     0, & otherwise, 
\end{cases}.$$
provides a basic feasible solution for OP for $\mathcal{C}_i$, satisfies the equivalent constraints in DC-MIP, and satisfies \eqref{eq-whole-to-compartment-decision-vars} and \eqref{eq-compartment-to-whole-decision-vars}. If we let $y_i(v,b) = \II\{b=\mu_i(v)\}$ for all $i$, and recall that the $\mu_i$ are coherent, then $\{\bm{\pi}_i,\bm{y}_i\}_{i=1}^I$ provides a feasible solution to DC-MIP.
\end{proof}
The next result shows that the same approach works in the other direction, moving from solutions of DC-MIP to solutions of CS-MILP.
\begin{proposition}
\label{prop:DC-MIP_to_CS-MILP}
Let $\mathfrak{C}$ be a decomposable impulsive CTMDP, and choose a decomposition into compartments $\{\mathfrak{C}_i\}_{i=1}^I$. Let $\{\bm{\pi}_i,\bm{y}_i\}_{i=1}^I$ be a solution to DC-MIP for compartments $\{\mathfrak{C}_i\}_{i=1}^I$. Then there exists a solution $(\bm{\pi},\bm{y})$ to CS-MILP for $\mathfrak{C}$ satisfying \eqref{eq-whole-to-compartment-decision-vars} and \eqref{eq-compartment-to-whole-decision-vars}.
\end{proposition}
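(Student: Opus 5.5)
The plan mirrors the proof of \autoref{prop:CS-MILP_to_DC-MIP}, run in reverse. First, from each block $(\bm{\pi}_i,\bm{y}_i)$ of the DC-MIP solution I will extract a deterministic coherent compartment policy $\mu_i$. On states $v$ with $\pi_i(v,\cdot)\neq 0$, constraints \eqref{ddcmilp-cons-y-pi}--\eqref{ddcmilp-cons-det} force a unique action $b$ with $\pi_i(v,b)>0$, and I set $\mu_i(v)=b$. On the remaining (transient) states, I take the unique $b$ with $y_i(v,b)=1$ if one exists, and otherwise the zero action. I must check that this choice can be made coherent everywhere. \eqref{ddcmilp-cons-coherent} directly gives coherence at recurrent post-decision states. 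For transient states, I will simply reset $\mu_i(v\oplus_i\mu_i(v))=0$ whenever needed, or apply the sequencing of \autoref{claim:coherentFromNonCoherent} restricted to transient states. Neither change affects the stationary distribution, since those states carry zero mass. After this step $\pi_i$ is the basic feasible solution of OP for $\mathfrak{C}_i$ corresponding to $\mu_i$.

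Next I define the separable policy $\mu(\bm{s})=(\mu_1([\bm{s}]_1),\dots,\mu_I([\bm{s}]_I))$. By \autoref{lemma:coherentIffSubCoherent} it is coherent. I let $\bm{\pi}$ be the basic feasible solution of OP for $\mathfrak{C}$ under $\mu$, namely $\pi(\bm{s},\mu(\bm{s}))=\pi^\mu(\bm{s})$ and zero otherwise. A stationary distribution exists because the state space is finite; if the induced chain is not unichain, I take the one built from the product of the compartment stationary distributions on the class level. I set $y(\bm{s},\bm{a})=\II\{\bm{a}=\mu(\bm{s})\}$. Feasibility of each constraint is then checked in turn:
\begin{itemize}
\item \eqref{dcimilp-cons-carry} holds since $\bm{\pi}$ is the occupation measure of $\mu$.
\item \eqref{dcimilp-cons-pi-y} and \eqref{dcimilp-cons-det} hold by determinism.
\item \eqref{dcimilp-cons-coherent} holds because $\mu(\bm{s}\oplus\bm{a})=\bm{0}$ whenever $y(\bm{s},\bm{a})=1$, by coherence.
\item \eqref{dcimilp-cons-indep} holds because whenever $[\bm{s}']_i=[\bm{s}]_i$ we have $[\mu(\bm{s}')]_i=\mu_i([\bm{s}]_i)=[\bm{a}]_i$, so the left side is zero.
\item \eqref{dcimilp-cons-bin} is trivial.
\end{itemize}

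Finally, equations \eqref{eq-whole-to-compartment-decision-vars} and \eqref{eq-compartment-to-whole-decision-vars} follow from \autoref{coro:stationaryIndep} combined with \autoref{lemma:s-aHat}. The class-aggregated stationary distribution of the controlled whole process factorises as the product of the compartment class-aggregated distributions. \autoref{coro:unifToCont} identifies those compartment distributions with the stationary distributions of $V^{\mu_i}$, which are exactly the given $\pi_i$ aggregated over $\sim_{\oplus_i}$ classes. \autoref{lemma:s-aHat} then transfers these state-level identities to the aggregated state-action frequencies, using the commuting projections $\psi_i$ from the appendix.

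The main obstacle I expect is the first step, the handling of states on which $\pi_i$ vanishes. There the $y_i$ variables need not define a coherent policy, and the product chain could fail to be unichain, so uniqueness of $\pi^\mu$ is not automatic. My plan is to argue that any modification on zero-mass states leaves every $\pi_i$ unchanged. I will also choose $\pi^\mu$ as the specific stationary distribution whose class aggregation is the product $\prod_i\widehat{\pi_i}$. This distribution is stationary for the aggregated chain by the independence in \autoref{prop:decompositionWorks}, and it lifts to a stationary distribution on $\mathcal{S}$ by placing each class's mass on its post-decision state.
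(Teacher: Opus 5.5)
Your route is the paper's: extract coherent compartment policies $\mu_i$ from the $\bm\pi_i$, form the separable policy $\mu$, and take $\bm\pi$ to be a stationary occupation measure of $\mu$ with $y(\bm s,\bm a)=\II\{\bm a=\mu(\bm s)\}$. You then read off \eqref{eq-whole-to-compartment-decision-vars}--\eqref{eq-compartment-to-whole-decision-vars} from \autoref{lemma:s-aHat} and \autoref{coro:stationaryIndep}. The paper does this in a few lines, using $\mu_i(v)=\arg\max_b\pi_i(v,b)$ and ``the'' stationary distribution of $\mathfrak{C}$ under $\mu$. Your treatment of zero-mass states and your constraint-by-constraint check are more careful than the source and are sound, with one small slip: on a zero-mass state several $b$ may have $y_i(v,b)=1$, since \eqref{ddcmilp-cons-det} is vacuous there, so ``the unique $b$'' should be ``any $b$''. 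Resetting zero-mass states also keeps $\pi_i$ a solution of the balance equations, though it can change the set of stationary distributions. The genuine problem is the step you use when the controlled chain is not unichain, a case DC-MIP does not exclude.

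Putting each class's mass on its post-decision state does not solve the balance equations on $\mathcal{S}$. States carrying a non-zero action are entered by real events and hold positive mass, and the post-decision state may even be transient. The paper's relabelling by post-decision states gives a chain on $\widehat{\mathcal S}$, not a solution of OP. For a counterexample, take one compartment with states $A,B,C$, $\mathcal{A}(A)=\mathcal{A}(C)=\{0,1\}$, $\mathcal{A}(B)=\{0\}$, $A\oplus 1=C\oplus 1=B$, and $q^+(A,0,C)=q^+(B,0,A)=1$, with all other generalised rates zero. This is weakly communicating, with $B$ transient under every policy. Under the coherent policy $\mu(A)=1$, $\mu(B)=\mu(C)=0$, state $A$ only makes dummy self-transitions and $C$ is absorbing. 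Hence $\pi_1=\tfrac12\delta_{(A,1)}+\tfrac12\delta_{(C,0)}$ is DC-MIP feasible, and the classes are $\{A,B\}$ and $\{C\}$. Your lift puts mass $\tfrac12$ on $B$, which flows to $A$ at rate $1$ while $A$ has no mass, violating \eqref{dcimilp-cons-carry}.

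The repair uses the fact that every member of a class $\widehat{\bm s}$ with post-decision state $\bm p_{\widehat{\bm s}}$ has generalised rate row $q^+(\bm p_{\widehat{\bm s}},0,\cdot)$. The balance equation at $\bm t$ then reads
\[
\rho(\bm t)\,\lambda(\widehat{\bm t})=\sum_{\widehat{\bm s}\in\widehat{\mathcal S}}\widehat\rho(\widehat{\bm s})\,q^+(\bm p_{\widehat{\bm s}},0,\bm t),\qquad \lambda(\widehat{\bm t}):=\sum_{\bm t'\in\mathcal S}q^+(\bm p_{\widehat{\bm t}},0,\bm t'),
\]
whose right side depends on $\rho$ only through the class totals $\widehat\rho$. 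Take $\widehat\rho=\prod_i\widehat{\pi_i}$. It is stationary for the lumped chain because the lumped generator of $\mathfrak C$ is the Kronecker sum of the lumped compartment generators, which is a cleaner justification than the discrete-time independence claim. Define $\rho(\bm t)$ by dividing by $\lambda(\widehat{\bm t})$, and place the mass of any class with $\lambda=0$ anywhere in it, since such classes receive no inflow. Lumped stationarity gives $\sum_{\bm t\in\widehat{\bm s}}\rho(\bm t)=\widehat\rho(\widehat{\bm s})$, so $\rho$ is stationary and aggregates to $\prod_i\widehat{\pi_i}$. Setting $\pi(\bm s,\bm a)=\rho(\bm s)\II\{\bm a=\mu(\bm s)\}$ then completes your argument. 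In the example this returns $\tfrac12\delta_A+\tfrac12\delta_C$, i.e.\ $\pi_1$ itself.
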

\begin{proof}
The $\bm{\pi}_i$ correspond to basic feasible solutions of OP for each $\mathfrak{C}_i$, which in turn corresponds to coherent policies $\mu_i(v)=\arg\max_{b\in\mathcal{A}_i(v)}\pi_i(v,b)$. Together, these form the CS policy $\mu=(\mu_1,\dots,\mu_I)$ for the whole CTMDP $\mathfrak{C}$. Let $S^\mu$ be the stationary distribution of $\mathfrak{C}$ controlled by $\mu$. Now we define:
$$\pi(\bm{s},\bm{a}):=\begin{cases}
\PP(S^\mu = \bm{s}),& \bm{a} = \mu(\bm{s}),\\ 
0,& \text{otherwise}.\end{cases}
$$
Then $\pi(\bm{s},\bm{a})$ provides a basic feasible solution to OP. Together with $y(\bm{s},\bm{a}):=\II\{\bm{a}=\mu(\bm{s})\}$, $(\bm{\pi},\bm{y})$ gives a solution to CS-MILP that satisfies the relevant equations.
\end{proof}
As a result, we can move from solutions of CS-MILP to DC-MIP and vice versa in such a way that the relevant equations are satisfied. We now bring these together to prove \autoref{thm:CS-MILP_and_DC-MIP_equiv}.

\begin{proof}

We must show that the existence of a solution with a certain objective value in one of the problems implies the existence of a solution in the other problem with the same objective value.

First, let $(\bm{\pi},\bm{y})$ be a solution to CS-MILP. We want to show that it has a corresponding solution in DC-MIP with the same objective value. The solution $(\bm{\pi},\bm{y})$ can be mapped to a solution $(\bm{\pi}_i,\bm{y}_i)_{i=1}^I$ of DC-MIP by \autoref{prop:CS-MILP_to_DC-MIP}, with $\bm{\pi}$ and $\bm{\pi}_i$ related by \eqref{eq-whole-to-compartment-decision-vars} and \eqref{eq-compartment-to-whole-decision-vars}. We recall:
$$g_k(\bm{\pi}) = f_k\left(\left(\sum_{s_1,a_1\in\mathcal{SA}_1}c_{1,k}(s_1\oplus a_1)\pi_1(s_1,a_1)\right)_{i=1}^I\right),$$
and we need only focus on one entry in this, so for any compartment $i$ and objective $k$ we have:
\begin{align*}
\sum_{(s_i,a_i)\in\mathcal{SA}_i}c_{i,k}(s_i\oplus a_i)\pi_i(s_i,a_i) 
&= \sum_{(s_i,a_i)\in\mathcal{SA}_i}\sum_{(\bm{s},\bm{a})\in\mathcal{SA}:[\bm{s},\bm{a}]_i=(s_i,a_i)}c_{i,k}(s_i\oplus a_i)\pi(\bm{s},\bm{a})\\
&= \sum_{\widehat{s_i,a_i}\in\widehat{\mathcal{SA}_i}}\sum_{(s_i,a_i)\in\widehat{s_i,a_i}}\sum_{(\bm{s},\bm{a})\in\mathcal{SA}:[\bm{s},\bm{a}]_i=(s_i,a_i)}c_{i,k}(s_i\oplus a_i)\pi(\bm{s},\bm{a})\\
&=\sum_{\widehat{s_i,a_i}\in\widehat{\mathcal{SA}_i}}c_{i,k}(\widehat{s_i,a_i})\sum_{(\bm{s},\bm{a})\in\mathcal{SA}:[\bm{s},\bm{a}]_i\in\widehat{s_i,a_i}}\pi(\bm{s},\bm{a})\\
&=\sum_{\widehat{s_i,a_i}\in\widehat{\mathcal{SA}_i}}c_{i,k}(\widehat{s_i,a_i})\sum_{\widehat{\bm{s},\bm{a}}\in\widehat{\mathcal{SA}}}\sum_{\bm{s},\bm{a}\in\widehat{s,a}:[\bm{s},\bm{a}]_i\in\widehat{s_i,a_i}}\pi(\bm{s},\bm{a})\\
&=\sum_{\widehat{s_i,a_i}\in\widehat{\mathcal{SA}_i}}c_{i,k}(\widehat{s_i,a_i})\sum_{\widehat{\bm{s},\bm{a}}\in\widehat{\mathcal{SA}}:[\widehat{\bm{s},\bm{a}}]_i=\widehat{s_i,a_i}}\widehat{\pi}(\widehat{\bm{s},\bm{a}})\\
&=\sum_{\widehat{s_i,a_i}\in\widehat{\mathcal{SA}_i}}c_{i,k}(\widehat{s_i,a_i})\widehat{\pi}_i(\widehat{s_i,a_i}).
\end{align*}
Therefore $\bm{g}(\bm{\pi})$ is entirely determined by the $\pi_i(\widehat{s_i,a_i})$ as determined by the equivalent solution to DC-MIP, so every solution to CS-MILP has a solution in DC-MIP with the same objective value.  

Secondly, we want to show the other direction. Let $(\bm{\pi}_i,\bm{y}_i)_{i=1}^I$ be a solution to DC-MIP. We want to show that it has a corresponding solution in CS-MILP with the same objective value. The solution $(\bm{\pi}_i,\bm{y}_i)_{i=1}^I$ can be mapped to a solution $(\bm{\pi},\bm{y})$ of CS-MILP by \autoref{prop:DC-MIP_to_CS-MILP}, with $\bm{\pi}$ and $\bm{\pi}_i$ related by \eqref{eq-whole-to-compartment-decision-vars} and \eqref{eq-compartment-to-whole-decision-vars}. Starting with the objective functions of DC-MIP, with $\bm{\pi}_i$ being the decision variables of DC-MIP (not the sum over variables of CS-MILP), we have:
    \begin{align*}
        \sum_{s_i,a_i\in\mathcal{SA}_i}c_{i,k}(s_i\oplus a_i)\pi_i(s_i,a_i) &= \sum_{\widehat{s_i,a_i}\in\widehat{\mathcal{SA}_i}}c_{i,k}(\widehat{s_i,a_i})\sum_{s_i,a_i\in\widehat{s_i,a_i}}\pi_i(s_i,a_i)\\
        &= \sum_{\widehat{s_i,a_i}\in\widehat{\mathcal{SA}_i}}c_{i,k}(\widehat{s_i,a_i})\widehat{\pi}_i(\widehat{s_i,a_i}),
    \end{align*}
    and we can then use the chain of equalities from the first part to get back to the objective function of CS-MILP, where the $\widehat{\pi}_i(\widehat{s_i,a_i})$ are determined by the $\widehat{\pi}(\widehat{\bm{s},\bm{a}})$, which in turn are determined by the equivalent solution to CS-MILP. Therefore every solution to DC-MIP has a solution in CS-MILP with the same objective value, and therefore the bi-implication holds.
\end{proof}

\newpage
\section{Table of results}

\autoref{tab:largeInstances} is discussed in \autoref{paper2:app:results}.

\label{app:table}
\begin{table}[h]
    \begin{minipage}{0.5\textwidth}
    \centering
    \begin{tabular}{lll}
    \toprule
        Problem & Times & Solutions \\ \midrule
        1:5 & 1.04 & 91 \\ 
        1:6 & 1.44 & 122 \\ 
        1:7 & 2.06 & 141 \\ 
        1:8 & 2.84 & 180 \\ 
        1:9 & 3.15 & 188 \\ 
        1:10 & 3.52 & 198 \\ 
        1:11 & 3.76 & 201 \\ 
        1:12 & 4.46 & 237 \\ 
        1:13 & 4.94 & 255 \\ 
        1:14 & 5.22 & 258 \\ 
        2:6 & 0.89 & 84 \\ 
        2:7 & 1.12 & 107 \\ 
        2:8 & 2.03 & 156 \\ 
        2:9 & 3.03 & 165 \\ 
        2:10 & 3.19 & 175 \\ 
        2:11 & 3.31 & 180 \\ 
        2:12 & 3.79 & 220 \\ 
        2:13 & 4.27 & 237 \\ 
        2:14 & 4.55 & 242 \\ 
        3:7 & 0.75 & 78 \\ 
        3:8 & 1.52 & 133 \\ 
        3:9 & 2.06 & 144 \\ 
        3:10 & 2.8 & 158 \\ 
        3:11 & 2.93 & 161 \\ 
        3:12 & 3.75 & 204 \\ 
        3:13 & 4.02 & 234 \\ 
        3:14 & 4.13 & 237 \\ 
        4:8 & 0.58 & 81 \\
         \bottomrule
    \end{tabular}
    \end{minipage}%
    \begin{minipage}{0.5\textwidth}
    \begin{tabular}{lll}
        \toprule
        Problem & Times & Solutions \\ \midrule         
        4:9 & 0.86 & 90 \\ 
        4:10 & 1.66 & 115 \\ 
        4:11 & 2.0 & 128 \\ 
        4:12 & 3.09 & 176 \\ 
        4:13 & 3.71 & 216 \\ 
        4:14 & 3.63 & 217 \\ 
        5:9 & 0.61 & 69 \\ 
        5:10 & 0.95 & 90 \\ 
        5:11 & 1.44 & 112 \\ 
        5:12 & 2.57 & 157 \\ 
        5:13 & 3.01 & 195 \\ 
        5:14 & 3.29 & 198 \\ 
        6:10 & 0.66 & 66 \\ 
        6:11 & 0.88 & 86 \\ 
        6:12 & 1.89 & 128 \\ 
        6:13 & 2.45 & 176 \\ 
        6:14 & 2.97 & 179 \\ 
        7:11 & 0.51 & 61 \\ 
        7:12 & 1.16 & 108 \\ 
        7:13 & 1.95 & 153 \\ 
        7:14 & 2.31 & 161 \\ 
        8:12 & 0.91 & 95 \\ 
        8:13 & 1.43 & 140 \\ 
        8:14 & 2.23 & 153 \\ 
        9:13 & 0.47 & 87 \\ 
        9:14 & 0.89 & 97 \\ 
        10:14 & 0.6 & 80 \\ 
        &&\\
        \bottomrule
    \end{tabular}
    \end{minipage}
    \caption{Heuristic results for large instances}
    \label{tab:largeInstances}
\end{table}

\clearpage
\includepdf[pages=-]{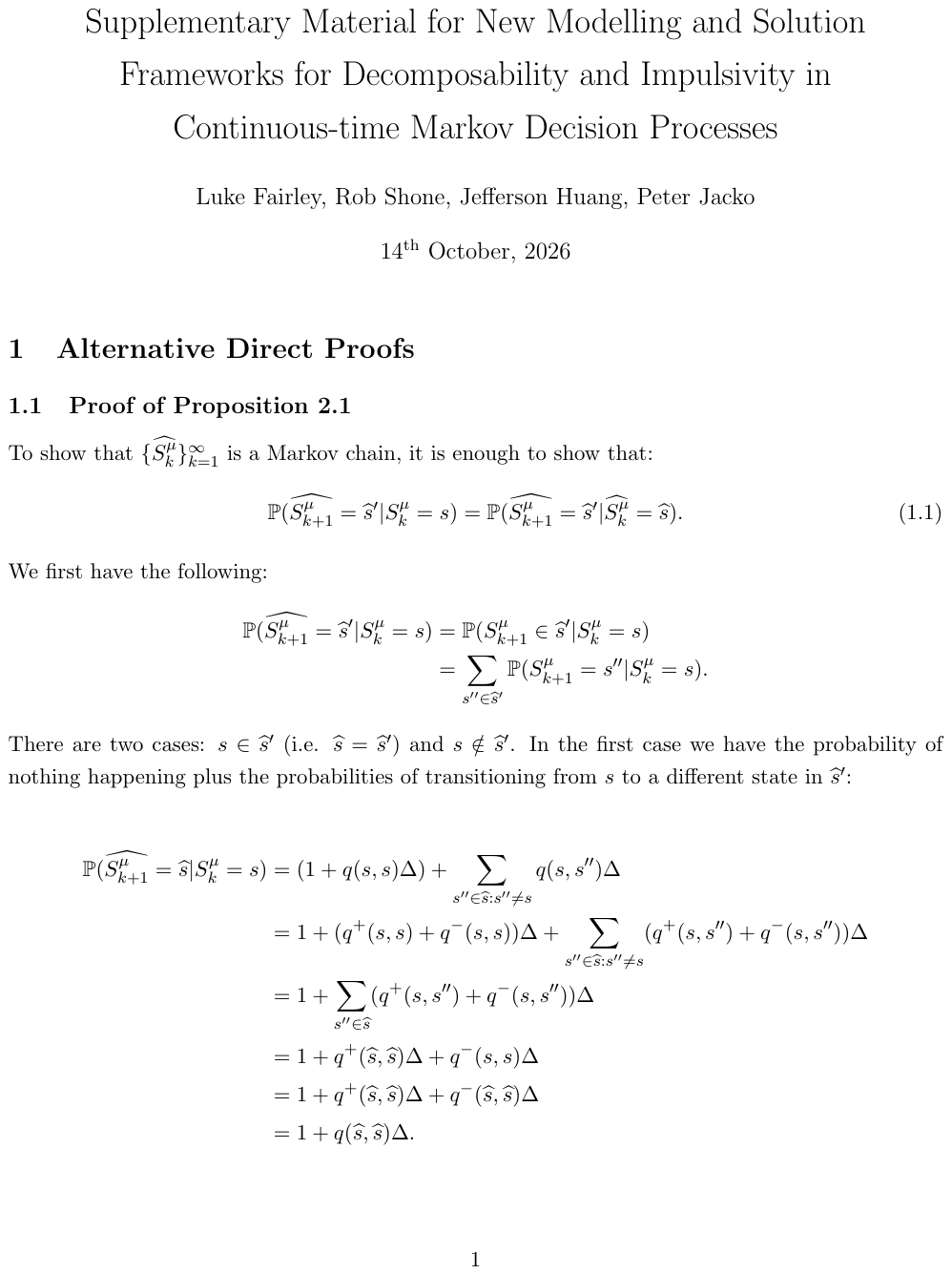}

\end{document}